\documentclass{amsart}
\usepackage{mathrsfs,yfonts}
\usepackage{amssymb,array}

\newcounter{lemma}
\newtheorem{Theorem}{Theorem}

\newtheorem*{TheoremA}{Theorem A}
\newtheorem*{TheoremAp}{Theorem A'}
\newtheorem*{TheoremB}{Theorem B}
\newtheorem{Lemma}[lemma]{Lemma}
\newtheorem{Corollary}[lemma]{Corollary}
\newtheorem{Proposition}[lemma]{Proposition}

\newtheorem*{Conjecture}{Conjecture}

\def\H{\mathbb H}

\def\Q{\mathbb Q}
\def\Z{\mathbb Z}
\def\R{\mathbb R}
\def\C{\mathbb C}
\def\O{\mathcal O}
\def\CC{\mathscr C}
\def\FF{\mathscr F}
\def\DD{\mathscr D}
\def\GG{\mathscr G}
\def\sg{\mathnormal g}
\def\div{\operatorname{div}}

\def\Mod{\,\mathrm{mod}\,}

\begin{document}

\title{Modular units and cuspidal
  divisor class groups of $X_1(N)$}
\author{Yifan Yang}
\address{Department of Applied Mathematics \\
  National Chiao Tung University \\
  Hsinchu 300 \\
  TAIWAN}
\email{yfyang@math.nctu.edu.tw}
\date\today

\begin{abstract}
  In this article, we consider the group $\FF_1^\infty(N)$ of modular
  units on $X_1(N)$ that have divisors supported on the cusps lying
  over $\infty$ of $X_0(N)$, called the $\infty$-cusps. For each
  positive integer $N$, we will give an explicit basis for the group
  $\FF_1^\infty(N)$. This enables us to compute the group structure of
  the rational torsion subgroup $\CC_1^\infty(N)$ of the Jacobian
  $J_1(N)$ of $X_1(N)$ generated by the differences of the
  $\infty$-cusps. In addition, based on our numerical
  computation, we make a  conjecture on the structure of the
  $p$-primary part of $\CC_1^\infty(p^n)$ for a regular prime $p$.
\end{abstract}

\subjclass[2000]{Primary 11G16, 11G18; secondary 11F03, 14G05}
\maketitle

\begin{section}{Introduction} Let $\Gamma$ be a congruence subgroup of
  $SL(2,\Z)$. A \emph{modular unit} $f(\tau)$ on $\Gamma$ is a
  modular function on $\Gamma$ whose poles and zeros are all at
  cusps. If $\{\gamma_j\}$ is a set of right coset representatives of
  $\Gamma$ in $SL(2,\Z)$, then any symmetric sum of $f(\gamma_j\tau)$
  is a modular function on $SL(2,\Z)$ having poles only at $\infty$,
  whence a polynomial of the elliptic $j$-function. Hence, modular units
  are contained in the integral closure of the ring $\C[j]$. For
  arithmetic considerations, one may restrict his attention to the
  modular units that are in the integral closure of $\Q[j]$ or that of
  $\Z[j]$.

  When $\Gamma=\Gamma_0(N)$, M. Newman \cite{Newman1,Newman2}
  determined sufficient conditions for a product
  $\prod_{d|N}\eta(d\tau)^{r_d}$ of Dedekind eta functions to be
  modular on $\Gamma_0(N)$. Because of the infinite product
  representation $\eta(\tau)=q^{1/24}\prod_{n=1}^\infty(1-q^n)$,
  $q=e^{2\pi i\tau}$, such a modular function is a modular unit on
  $\Gamma_0(N)$. In \cite{Takagi}, Takagi showed that for squarefree
  integers $N$, these functions generate the group of modular units on
  $\Gamma_0(N)$. 

  When $\Gamma=\Gamma(N)$ is the principal congruence subgroup of
  level $N$, the group of modular units on $\Gamma(N)$ has been
  studied extensively by Kubert and Lang in a series of papers
  \cite{Kubert-Lang-I,Kubert-Lang-II,Kubert-Lang-III,Kubert-Lang-IV,
  Kubert-Lang-V}. Their main result states that the group of modular
  units on $\Gamma(N)$ is generated by (products of) the Siegel
  functions, except for $2$-cotorsions in the case when $N$ is
  an even composite integer. (See \cite{Kubert}.) Note that here a
  $2$-cotorsion of a subgroup $H$ of a group $G$ means an element $g$
  in $G$ satisfying $g\not\in H$, but $g^2\in H$.

  When $\Gamma=\Gamma_1(N)$, $N\ge 3$, Kubert and Lang \cite[Chapter
  3]{Kubert-Lang-book} consider a special subgroup $\FF$ of modular
  units that have divisors supported at cusps lying over the cusp $0$
  of $X_0(N)$. A set of representatives for such cusps is
  $$
    \{1/a:~1\le a\le N/2,~(a,N)=1\}.
  $$
  Because the homomorphism given by $\div:\,f\mapsto\div f$ is
  injective modulo $\C^\times$, the maximal possible rank for this
  subgroup modulo $\C^\times$ is $\phi(N)/2-1$. In \cite[Chapter
  3]{Kubert-Lang-book}, Kubert and Lang showed that this special
  subgroup does attain the maximal rank allowed. Moreover, they proved
  that this subgroup is generated by a certain class of the Siegel
  functions. Unlike the case $\Gamma(N)$, non-trivial cotorsions do
  not exist in this case.

  The set of modular units is a very important object in number
  theory because of its wide range of applications. For example, the
  ray class field $K(\mathfrak f)$ of conductor $\mathfrak f$ of an
  imaginary quadratic number field $K$ can be obtained by adjoining
  the values, called the \emph{Siegel-Ramachandra-Robert invariants},
  of a suitable chosen modular unit on $\Gamma(N(\mathfrak f))$ at
  certain imaginary quadratic numbers, where $N(\mathfrak f)$ denotes
  the smallest positive rational integer in $\mathfrak f$. (See
  Ramachandra \cite{Ramachandra}.) The same Siegel-Ramachandra-Robert
  invariants also appeared earlier in the Kronecker limit formula for the
  $L$-functions associated with characters of the ray class group,
  originally due to C. Meyer \cite{Meyer}. Furthermore, in
  \cite{Robert}, Robert showed that suitable products of the
  Siegel-Ramachandra-Robert invariants are units, called
  \emph{elliptic units}, in the ray class field $K(\mathfrak f)$, and
  determined the index of the group of elliptic units in the full unit
  group of $K(\mathfrak f)$ for the case $\mathfrak f$ is a power of
  prime ideal.
  The elliptic units featured prominently in Coates and Wiles' proof
  of (the weak form of) the Birch and Swinnerton-Dyer conjecture for the
  cases where elliptic curves have complex multiplication by the ring
  of integers in an imaginary quadratic field of class number one
  \cite{Coates-Wiles}.

  Another area in which modular units plays a fundamental role is the
  arithmetic of the Jacobian variety of a modular curve. Let $\Gamma$
  be a congruence subgroup of level $N$. Consider the cuspidal
  embedding $i_\infty:P\mapsto[(P)-(\infty)]$, sending a point $P$ to
  the divisor class of $(P)-(\infty)$, of the modular curve
  $X(\Gamma)$ into its Jacobian $J(\Gamma)$. From the work of Manin
  and Drinfeld \cite{Manin}, we know that if $P$ is a cusp, then
  $i_\infty(P)$ is a torsion point on $J(\Gamma)$. When the congruence
  subgroup $\Gamma$ is $\Gamma_0(N)$ for some squarefree integer $N$,
  these points are actually rational points in
  $J_0(N)=J(\Gamma_0(N))$. For the case $N=p$ is a prime, Ogg
  \cite{Ogg-survey} conjectured and Mazur \cite{Mazur} proved that the
  full rational torsion subgroup of $J_0(p)$ is generated by
  $i_\infty(0)=(0)-(\infty)$. Later on, in light of the Manin-Mumford
  conjecture (theorem of Raynaud), Coleman, Kaskel, and Ribet
  \cite{Coleman} proposed a stronger conjecture asserting that
  with the exception of hyperelliptic cases (excluding $p=37$), if a
  modular curve $X_0(p)$ has genus greater than $1$, then the only
  points on $X_0(p)$ that are mapped into torsion subgroups of
  $J_0(p)$ under $i_\infty$ are the two cusps. This conjecture was
  later established by Baker \cite{Baker}.

  For congruence subgroups of type $\Gamma_1(N)$, the images of cusps
  under the cuspidal embedding $i_\infty$ are rational over
  $\Q(e^{2\pi i/N})$. Moreover, when the cusps $P$ are lying over
  $\infty$ of $X_0(N)$, called the $\infty$-cusps, the
  images $i_\infty(P)$ are rational over $\Q$. In particular, these
  points $i_\infty(P)$ generate a rational torsion subgroup of
  $J_1(N)$. We shall denote this rational torsion subgroup by
  $\CC_1^\infty(N)$. If we let $C_1^\infty(N)$ denote the set of
  $\infty$-cusps, $\DD_1^\infty(N)$ the group of divisors of degree
  $0$ generated by $C_1^\infty(N)$, and $\FF_1^\infty(N)$ the group of
  modular units with divisors supported on $C_1^\infty(N)$, then the
  order of the $\CC_1^\infty(N)$ is simply the divisor class
  number $h_1^\infty(N):=|\DD_1^\infty(N)/\div\FF_1^\infty(N)|$, which
  has been completely determined in literature. Firstly, the prime
  cases were settled by Klimek \cite{Klimek}. Then Kubert and Lang
  \cite{Kubert-Lang-classno} considered the prime power
  cases $p^n$ with $p\ge 5$. Finally, Yu \cite{Yu} gave a class number
  formula in full generality (given as Theorem A in the next section).
  Note that the divisor class number formulas in
  \cite[Theorem 3.4]{Kubert-Lang-book} and \cite{Yu} are stated as for
  the subgroup generated by the differences of the cusps lying over
  $0$ of $X_0(N)$. However, it is plain that the Atkin-Lehner
  involution
  $\left(\begin{smallmatrix}0&-1\\N&0\end{smallmatrix}\right)$ gives
  rise to an isomorphism between the two divisor class groups.

  It worths mentioning that the class number formula
  $$
    h_1^\infty(p^n)=\prod_{\chi\neq\chi_0}\frac14B_{2,\chi}\cdot
    \begin{cases}p^{p^{n-1}-2n+2}, &p\text{ odd}, \\
    2^{2^{n-1}-2n+3}, &p=2,
    \end{cases}
  $$
  for the prime power cases suggests that there exists an Iwasawa
  theory of $\Z_p$-extensions for the $p$-parts of
  $\CC_1^\infty(p^n)$. The $\Z_p$-extension cannot be coming from the
  covering $X_1(p^{n+1})\to X_1(p^n)$ as the covering is not even Galois.
  However, it is still possible to define a $\Z_p$-action on the sets
  $C_1^\infty(p^n)$ of $\infty$-cusps. This in turn induces a
  $\Z_p$-action on the divisor class groups $\CC_1^\infty(p^n)$. We
  plan to address the Iwasawa properties of the $p$-parts of
  $\CC_1^\infty(p^n)$ in the future, cf. the conjecture below.

  With the order of $\CC_1^\infty(N)$ determined, it is natural to ask
  the following questions:
  \begin{enumerate}
  \item Is $\CC_1^\infty(N)$ the full rational torsion subgroup of
  $J_1(N)$ when the $\infty$-cusps are the only cusps whose images in
  $J_1(N)$ are rational over $\Q$?
  \item What is the group structure of $\CC_1^\infty(N)$? In particular,
  what is the structure of the $p$-primary part in the case $N=p^n$ is
  a prime power?
  \end{enumerate}
  For the case $N=p$ is a prime, Conrad, Edixhoven, and Stein
  \cite[Section 6.1]{Conrad-Edixhoven-Stein} devised an algorithm to
  obtain an upperbound for the order of the rational torsion subgroup
  of $J_1(p)$. Comparing the upperbound with Kubert and Lang's divisor
  class number formula, they found that for $p<157$ not equal to $29$,
  $97$, $101$, $109$, $113$, the upperbound agrees with the divisor
  class number. In other words, in those cases $\CC_1^\infty(p)$ is
  the whole rational torsion subgroup. Based on this numerical
  evidence, Conrad, Edixhoven, and Stein
  \cite[Page 392]{Conrad-Edixhoven-Stein} conjectured that for a prime
  $p\ge 5$, any rational torsion point on $J_1(p)$ is contained in the
  subgroup generated by the images of $\infty$-cusps under $i_\infty$.

  For the second question about the structure of $\CC_1^\infty(N)$,
  the problem eventually boils down to the problem of finding a basis
  for the group $\FF_1^\infty(N)$ of modular units. In \cite{Csirik}
  Csirik determined the structure of $\CC_1^\infty(p)$ for the first few
  primes $p$. Presumably, he should have a method or an algorithm for
  finding a basis for $\FF_1^\infty(p)$. Beyond \cite{Csirik}, it
  appears that nothing is known in literature.

  In this article, for each integer $N$ greater than $4$, we will
  construct an explicit basis for the group $\FF_1^\infty(N)$ in terms
  of the Siegel functions. We will use two different methods in
  obtaining our results, depending on whether $N$ is a prime power.
  When $N=p^n$ is a prime power, we use Yu's class number formula
  (Theorem A below) and linear algebra arguments to show that our
  basis does generate the whole group of modular units. When $N$ is
  not a prime power, we use Yu's characterization of $\FF_1^\infty(N)$
  (Theorem B below) to argue directly that every function in
  $\FF_1^\infty(N)$ is a product of functions from our basis. In
  either case, the so-called distribution relations (Lemma \ref{lemma:
  Bernoulli relation}) play an essential role.

  We then carry out some numerical computation on the structure of
  $\CC_1^\infty(N)$ for $N$ in the range of a few hundreds. This amounts
  to computing the Smith normal form of the matrix representing the
  divisors of the modular units in the basis. We have also computed
  the structure of the $p$-primary part of $\CC_1^\infty(p^n)$ for
  $p^n<800$ with $p=2,3,5,7$. Based on our limited numerical data (see
  Section \ref{section: results}), we make the following
  conjecture about the structure of the $p$-primary part of
  $\CC_1^\infty(p^n)$ for a regular prime $p$.

\begin{Conjecture} Let $p$ be a prime and $n$ be a positive
  integer. Consider the endomorphism $[p]:\CC_1^\infty(p^n)\to
  \CC_1^\infty(p^n)$ defined by multiplication by $p$. Define the
  \emph{$p$-rank} of $\CC_1^\infty(p^n)$ to be the integer $k$ such
  that the kernel of $[p]$ has $p^k$ elements. If $p$ is a regular
  prime, then we conjecture that the $p$-rank of $\CC_1^\infty(p^n)$ is
  $$
    \frac12(p-1)p^{n-2}-1
  $$
  for prime power $p^n\ge 8$ with $n\ge 2$.

  More precisely, for a prime power $p^n\ge 8$ with a regular prime
  $p$, we conjecture that the the number of copies of $\Z/p^{2k}\Z$ in
  the primary decomposition of $\CC_1^\infty(p^n)$ is
  $$
    \begin{cases}
    \frac12(p-1)^2p^{n-k-2}-1, &\text{if }p=2\text{ and }k\le n-3, \\
    \frac12(p-1)^2p^{n-k-2}-1, &\text{if }p\ge3\text{ and }k\le n-2, \\
    \frac12(p-5), &\text{if }p\ge 5\text{ and }k=n-1, \\
    0, &\text{else}.
    \end{cases}
  $$
  and the number of copies of $\Z/p^{2k-1}\Z$ is
  $$
    \begin{cases}
    1, &\text{if }p=2\text{ and }k\le n-3, \\
    1, &\text{if }p=3\text{ and }k\le n-2, \\
    1, &\text{if }p\ge 5\text{ and }k\le n-1, \\
    0, &\text{else}.
    \end{cases}
  $$
\end{Conjecture}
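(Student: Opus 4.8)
The plan is to present $\CC_1^\infty(p^n)$ as a quotient of a group ring by a Bernoulli--Stickelberger ideal and then to determine its $p$-primary structure one character component at a time, the regularity of $p$ being precisely what forces each component to be cyclic of the predicted order. First I would feed the explicit basis of Siegel functions constructed in this paper into the divisor map: since the order of vanishing of a Siegel function at an $\infty$-cusp is a value of the second Bernoulli polynomial $B_2(x)=x^2-x+\frac{1}{6}$ at a fraction $\langle a/p^n\rangle$, the resulting integer relation matrix has Bernoulli values as entries. The group $G=(\Z/p^n\Z)^\times/\{\pm1\}$ acts on the set $C_1^\infty(p^n)$ of $\infty$-cusps through the diamond operators, turning this matrix into a presentation of $\CC_1^\infty(p^n)$ as a $\Z[G]$-module. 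Using the distribution relations (Lemma~\ref{lemma: Bernoulli relation}) I expect to show that the module is cyclic, generated by a single cuspidal difference, with annihilator the ideal $I$ generated by the Stickelberger-type element $\theta=\sum_a B_2(\langle a/p^n\rangle)\,\sigma_a^{-1}$ together with the norm relation, so that $\CC_1^\infty(p^n)\cong\Z[G]/I$.

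Next I would extract the $p$-part by tensoring with $\Z_p$ and decomposing $\Z_p[G]$ into its local direct factors $\Lambda_\psi\cong\O_\psi[T]/\bigl((1+T)^{p^{n-1}}-1\bigr)$, with $\O_\psi=\Z_p[\psi]$, indexed by the even characters $\psi$ of the tame quotient $(\Z/p\Z)^\times/\{\pm1\}$. In the $\psi$-component the module is cyclic over $\Lambda_\psi$ and its annihilator is generated by the image $g_\psi(T)$ of $\theta$, which, after one strips the Euler factor produced by the distribution relation, interpolates the generalized Bernoulli numbers $B_{2,\chi}$ as $\chi$ runs over the even characters of $p$-power conductor lying above $\psi$. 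The abelian-group structure of the component is therefore the structure of $\Lambda_\psi/(g_\psi)$, which is governed by the Newton polygon, or equivalently the Iwasawa invariants, of the power series $g_\psi$.

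The regularity hypothesis enters decisively here. For a regular prime the $\mu$-invariant of each $g_\psi$ vanishes and the associated $p$-adic $L$-function has minimal $\lambda$-invariant on every branch, with simple zeros; granting this, $\Lambda_\psi/(g_\psi)$ is a direct sum of cyclic groups whose orders are read off from the $p$-adic valuations of $g_\psi$ at the $p^{m}$-th roots of unity. Matching those valuations to the conjectured exponents $2(n-m)$ and $2(n-m)-1$ for characters of exact conductor $p^m$, and recognising that exactly one character per conductor level suffers a drop by one power (the source of the subtracted $1$ and of the lone copy of $\Z/p^{2k-1}\Z$), would reproduce the full primary decomposition once the $\psi$-components are reassembled.

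The hardest part, and the reason the statement is only conjectural, is to pin down the finite-level module structure rather than merely the order. Yu's class number formula (Theorem~A) already delivers $|\CC_1^\infty(p^n)|$, but recovering the elementary divisors requires two further inputs: first, that each $\psi$-component is genuinely cyclic over $\Lambda_\psi$---a finite-level analogue of a main conjecture, stronger than an equality of orders; and second, the exact $p$-adic valuation of $g_\psi$ at every relevant root of unity, that is, fixing the $\lambda$-invariant to $1$ on each branch and locating the single exceptional zero per level. Even granting the Mazur--Wiles main conjecture, the passage from characteristic ideals to the actual abelian-group structure at finite level is delicate; I expect it to demand a Sinnott-style analysis of the image of the Bernoulli distribution element, with regularity invoked exactly to annihilate the higher Bernoulli obstructions $B_{2k}$ for $2\le 2k\le p-3$ that would otherwise perturb these valuations.
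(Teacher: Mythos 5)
The first thing to say is that the paper does not prove this statement: it is explicitly labelled a Conjecture, and the only support offered for it is numerical --- the author computes Smith normal forms of the divisor matrices attached to the explicit bases of $\FF_1^\infty(p^n)$ for $p^n\le 800$ with $p=2,3,5,7$ and reads off the tables in Section \ref{section: results}. So there is no proof in the paper to compare yours against, and your text is likewise not a proof but a research program, as you yourself concede in your final paragraph. Judged as a program, it is a reasonable one in outline (a Stickelberger-ideal presentation plus an Iwasawa-theoretic analysis of the branches is surely the right genre of argument), but two of its load-bearing steps are asserted rather than established, and one of them points in the wrong direction.

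First, the claim that $\CC_1^\infty(p^n)\cong\Z[G]/I$ with $I$ generated by a single Stickelberger-type element is exactly the hard finite-level statement, not a preliminary reduction. The diamond operators act simply transitively on the $\infty$-cusps, so $\DD_1^\infty(p^n)$ is the augmentation ideal of $\Z[G]$, not $\Z[G]$ itself; and $\div\FF_1^\infty(p^n)$ is \emph{not} the $\Z[G]$-span of one Siegel-function divisor: the paper's basis necessarily includes the functions $E^{(p^\ell)}_g(p^{n-\ell}\tau)$ imported from every lower level $\ell<n$ and glued by the distribution relations, and whether the resulting quotient is $\Z[G]$-cyclic with the annihilator you name is precisely the finite-level main-conjecture-type input you defer to the end. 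Second, and more seriously, you propose to read the $p$-primary structure off the Newton polygons of the branches $g_\psi$ interpolating the $B_{2,\chi}$, ``after one strips the Euler factor produced by the distribution relation.'' But for a regular prime the Bernoulli factors $\prod_{\chi\neq\chi_0}\frac14B_{2,\chi}$ contribute only a bounded (indeed small, possibly negative) power of $p$, while the bulk of the $p$-part of $h_1^\infty(p^n)$ is the factor $p^{L(p)}$ with $L(p)=p^{n-1}-2n+2$ in Theorem A, which in the paper's own computation arises from $\det V_\ell=p^{2\phi_{\ell-1}}$, i.e.\ from the index of the lattice generated by the inter-level units. The term you plan to strip off is the main term, not a correction; regularity serves only to keep the Bernoulli factors from perturbing it. Any genuine proof must therefore analyze the $\Z[G]$-module structure of the between-levels index coming from the distribution relations, which your sketch discards at the outset.
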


  For powers of irregular primes, since the first case $N=37^2$ has
  already exceeded our computational capacity, we do not have any
  numerical data to make any informed conjecture. However, we expect
  that the structure of the $p$-primary part of $\CC_1^\infty(p^n)$ for
  such cases should still possess similar patterns. It would be
  very interesting to see if the same phenomenon also appears in the
  case of $\Z_p$-extension of number fields and function fields over
  finite fields.

  The rest of the article is organized as follows. In Section
  \ref{section: methodology}, we explain our methods in details. In
  particular, we will review the basic properties of the Siegel
  functions, and then describe how we put Yu's theorems to
  use. In Sections \ref{section: prime powers} and
  \ref{section: non-prime power}, we consider the
  prime power cases and non-prime power cases, respectively. The prime
  cases, the odd prime power cases, and the power of $2$ cases are
  dealt with separately in Theorems \ref{theorem: prime},
  \ref{theorem: prime power}, and \ref{theorem: power of 2}. In
  Theorem \ref{theorem: squarefree} of Section
  \ref{section: non-prime power}, we consider the squarefree 
  composite cases. Finally, in Theorem \ref{theorem: nonsquarefree} we
  present bases for the remaining cases. Note that even though
  Theorem \ref{theorem: squarefree} is just a special case of Theorem
  \ref{theorem: nonsquarefree}, because the construction of bases in
  those cases is signicantly simpler and more transparent, those cases
  are handled separately to give the reader a clearer picture. Because
  of the complexity of our bases, many examples will be
  given. Finally, in Section 5, we give the results of our computation
  on the structures of of $\CC_1^\infty(N)$ for $N\le 100$. The
  structures of the $p$-primary parts of $\CC_1^\infty(p^n)$ are also
  given for $p^n\le 800$ with $p=2,3,5,7$, along with a few cases of
  $\CC_1^\infty(mp^n)$.
\medskip

\centerline{\sc Acknowledgments}
\medskip

The author would like to thank Professor Jeng-Daw Yu of the Queen's
University for his interest in the work and explaining Iwasawa's
theory of $\Z_p$-extension to the author. The author would also like
to thank Professor Jing Yu of the National Tsing Hua University,
Taiwan and Professor Imin Chen of the Simon Fraser University for
clarifying questions about arithmetic properties of modular curves.

Part of this work was done while the author was a visiting scholar at
the Queen's University, Canada. The author would like to thank
Professor Yui and the Queen's University for their warm hospitality.
The visit was supported by Grant 96-2918-I-009-005 of the National
Science Council of Taiwan and Discovery Grant of Professor Yui of the
National Sciences and Engineering Council of Canada. The author was
also partially supported by Grant 96-2628-M-009-014 of the National
Science Council.
\end{section}

\begin{section}{Methodology} \label{section: methodology}
\begin{subsection}{Notations and conventions}
  \label{subsection: notations} In this section, we will
  collect all the notations and conventions that will be used
  throughout the paper.

  Foremost, the letter $N$ will be reserved exclusively for the
  level of the congruence subgroup $\Gamma_1(N)$ that is under our
  consideration. For $N\ge 5$, we let
  \begin{align*}
    C_1^\infty(N)&=\text{the set of cusps of }X_1(N)
      \text{ lying over }\infty\text{ of }X_0(N), \\
    \DD_1^\infty(N)&=\text{the group of divisors of degree }0
      \text{ on }X_1(N)\text{ having support on }C_1^\infty(N), \\
    \FF_1^\infty(N)&=\text{the group of modular units on }X_1(N) \\
    &\quad\qquad \text{ having divisors supported on }C_1^\infty(N),
      \\
    \CC_1^\infty(N)&=\text{the divisor class group }
      \DD_1^\infty(N)/\div\FF_1^\infty(N), \\
    h_1^\infty(N)&=|\DD_1^\infty(N)/\div\FF_1^\infty(N)|,
      \text{ the divisor class number}.
  \end{align*}
  Given a Dirichlet character $\chi$ modulo $N$, we let $B_{k,\chi}$
  denote the generalized Bernoulli numbers defined by the power series
  $$
    \sum_{a=1}^N\chi(a)\frac{te^{at}}{e^{Nt}-1}
   =\sum_{k=0}^\infty\frac{B_{k,\chi}}{k!}t^k.
  $$
  In particular, if we let $\{x\}$ be the fractional part of a real
  number $x$, then we have
  $$
    B_{2,\chi}=N\sum_{a=1}^N\chi(a)B_2(a/N),
  $$
  where
  $$
    B_2(x)=\{x\}^2-\{x\}+\frac16
  $$
  is the second Bernoulli function. Note that some authors define
  generalized Bernoulli numbers slightly differently. If an
  imprimitive Dirichlet character $\chi$ has conductor $f$ and let
  $\chi_f$ be the character modulo $f$ that induces $\chi$, then in
  \cite{Yu} the Bernoulli number associated with $\chi$ is defined as
  \begin{equation} \label{equation: Yu's Bernoulli}
    \frac f2\sum_{a=1}^f \chi_f(a)B_2(a/f),
  \end{equation}
  which is $B_{2,\chi_f}/2$ in our notation. The numbers $B_{2,\chi}$
  and $B_{2,\chi_f}$ are connected by the formula
  $$
    B_{2,\chi}=B_{2,\chi_f}\prod_{p|N,~p\nmid f}(1-\chi_f(p)p),
  $$
  proved in \eqref{equation: conductor relation} below.

  We now introduce the Siegel functions. For $a=(a_1,a_2)\in\Q^2$,
  $a\not\in\Z^2$, the Siegel functions $\sg_a(\tau)$ are usually
  defined in terms of the Klein forms and the Dedekind eta functions.
  For our purpose, we only need to know that they have the following
  infinite product representation. Setting $z=a_1\tau+a_2$,
  $q_\tau=e^{2\pi i\tau}$, and $q_z=e^{2\pi iz}$, we have
  $$
    \sg_a(\tau)=-e^{2\pi ia_2(a_1-1)/2}q_\tau^{B(a_1)/2}
    (1-q_z)\prod_{n=1}^\infty(1-q_\tau^n q_z)(1-q_\tau^n/q_z),
  $$
  where $B(x)=x^2-x+1/6$ is the second Bernoulli polynomial.
  These functions, without the exponential factor $-e^{2\pi
  ia_2(a_1-1)/2}$, are sometimes referred to as generalized Dedekind
  eta functions by some authors, notably
  \cite[Chapter VIII]{Schoeneberg}. 

  For a given integer $N$, we also define a class of functions
  $$
    E_a^{(N)}(\tau)=-\sg_{(a/N,0)}(N\tau)
   =q^{NB(a/N)/2}\prod_{n=1}^\infty\left(1-q^{(n-1)N+a}\right)
    \left(1-q^{nN-a}\right),
  $$
  for integers $a$ not congruent to $0$ modulo $N$, where
  $q=e^{2\pi i\tau}$. The basic properties of $E_a^{(N)}(\tau)$ will
  be introduced in the next section. They will be the building blocks
  of our bases. In a loose sense, we say these functions are functions of
  \emph{level $N$}. (It does not mean that $E_a^{(N)}$ by itself is
  modular on some congruence subgroup of level $N$, but that we use
  them to construct modular functions on $\Gamma_1(N)$. However, the
  $12N$th power of $E_a^{(N)}$ is indeed modular on $\Gamma_1(N)$.)
  Notice that if $d|(a,N)$, then we have the trivial relation
  $$
    E_a^{(N)}(\tau)=E_{a/d}^{(N/d)}(d\tau).
  $$
  From time to time, we will write $E_{a/d}^{(N/d)}(d\tau)$ in place
  of $E_a^{(N)}(\tau)$ to stress the point that such a function is
  really coming from a congruence subgroup of lower level. Also, the
  notation $E_a(\tau)$ without the superscript has the same meaning
  as $E_a^{(N)}(\tau)$.

  Note that it is easy to check that $E_{g+N}=E_{-g}=-E_g$. Therefore,
  for a given $N$, there are only $\lceil(N-1)/2\rceil$ essentially
  distinct $E_g$, indexed over the set $(\Z/N\Z)/\pm 1-\{0\}$. Thus,
  a product $\prod_g E_g^{e_g}$ is understood to be taken over
  $g\in(\Z/N\Z)/\pm 1-\{0\}$.

  Finally, for a divisor $K$ of $N$, the group $\Z/K\Z$ acts on
  $(\Z/N\Z)/\pm 1$ naturally by $a\mapsto a+kN/K$ for
  $a\in(\Z/N\Z)/\pm1$ and $k\in\Z/K\Z$. We let
  $$
    \O_{a,K}:=\{b\in\Z/N\Z:~a\equiv b\mod N/K\}/\pm 1
  $$
  be the orbit of $a$ under this group action.
\end{subsection}

\begin{subsection}{Properties of Siegel functions}
  In this section, we collect properties of the functions
  $E_g^{(N)}=E_g$ relevant to our consideration. The proof of these
  properties can be found in \cite{Yang}. Throughout the section, $N$
  is a fixed integer greater than $4$.

The first property given is the transformation law for $E_g$.

\begin{Proposition}[{\cite[Corollary 2]{Yang}}]
  \label{proposition: Eg formulas} The functions $E_g$ satisfy
\begin{equation}
\label{shifting for Eg}
  E_{g+N}=E_{-g}=-E_g.
\end{equation}
Moreover, let $\gamma=\begin{pmatrix}a&b\\ cN&d\end{pmatrix}\in
\Gamma_0(N)$. We have, for $c=0$,
$$
  E_g(\tau+b)=e^{\pi ibNB(g/N)}E_g(\tau),
$$
and, for $c>0$,
\begin{equation}\label{Eg formula}
  E_g(\gamma\tau)=\epsilon(a,bN,c,d)e^{\pi i(g^2ab/N-gb)}
  E_{ag}(\tau),
\end{equation}
where
\begin{equation*}
  \epsilon(a,b,c,d)
  =\begin{cases}
    e^{\pi i\left(bd(1-c^2)+c(a+d-3)\right)/6},
      &\text{if }c\text{ is odd}, \\
    -ie^{\pi i\left(ac(1-d^2)+d(b-c+3)\right)/6},
      &\text{if }d\text{ is odd}.
  \end{cases}
\end{equation*}
\end{Proposition}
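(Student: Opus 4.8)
The plan is to reduce every assertion to the classical transformation behavior of the Siegel functions under $SL(2,\Z)$, using the defining identity $E_g(\tau)=-\sg_{(g/N,0)}(N\tau)$. The two shifting relations \eqref{shifting for Eg} are the easiest, and I would read them straight off the infinite product. Replacing $g$ by $g+N$ merely reindexes the two products while sending the leading exponent $NB(g/N)/2$ to $NB\bigl((g+N)/N\bigr)/2=NB(-g/N)/2$ (using $B(x+1)=B(-x)$), whereas replacing $g$ by $-g$ interchanges the roles of the two products and converts the single factor $(1-q^g)$ into $(1-q^{-g})=-q^{-g}(1-q^g)$. In both cases a short computation with $B(-g/N)-B(g/N)=2g/N$ shows that the accumulated powers of $q$ cancel, leaving precisely the factor $-1$. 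The case $c=0$ of a matrix with $a=d=1$ is equally direct: the substitution $\tau\mapsto\tau+b$ sends $q\mapsto qe^{2\pi ib}$, which fixes every factor of the product (their exponents being integers) and multiplies the prefactor $q^{NB(g/N)/2}$ by $e^{\pi ibNB(g/N)}$.

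For the main case $c>0$ I would first set $w=N\tau$ and observe that
$$
  N\gamma\tau=\frac{aw+bN}{cw+d}=\alpha w,\qquad
  \alpha=\begin{pmatrix}a&bN\\c&d\end{pmatrix},
$$
where $\det\alpha=ad-bcN=\det\gamma=1$, so $\alpha\in SL(2,\Z)$. Hence $E_g(\gamma\tau)=-\sg_{(g/N,0)}(\alpha w)$, and I would invoke the standard Kubert--Lang transformation law $\sg_v(\alpha w)=\varepsilon(\alpha)\,\sg_{v\alpha}(w)$, in which the root of unity $\varepsilon(\alpha)$ depends only on $\alpha$. A direct multiplication gives $(g/N,0)\alpha=(ag/N,\,bg)$, and since the second entry $bg$ is an integer I would apply the elementary periodicity $\sg_{(a_1,a_2+m)}=e^{\pi im(a_1-1)}\sg_{(a_1,a_2)}$ (again visible from the product, where $q_z$ is unchanged and only the prefactor $-e^{2\pi ia_2(a_1-1)/2}$ moves) with $m=bg$. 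Reassembling, and recalling $\sg_{(ag/N,0)}(N\tau)=-E_{ag}(\tau)$ so that the two minus signs cancel, then yields
$$
  E_g(\gamma\tau)=\varepsilon(\alpha)\,e^{\pi i(g^2ab/N-gb)}\,E_{ag}(\tau),
$$
which is already the asserted shape of \eqref{Eg formula}, with the exponent $g^2ab/N-gb$ appearing exactly as in the statement.

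What remains, and what I expect to be the only genuine difficulty, is identifying the abstract multiplier $\varepsilon(\alpha)$ with the explicit expression $\epsilon(a,bN,c,d)$. This amounts to pinning down the precise root of unity in the transformation of the Klein form, equivalently the Dedekind eta multiplier system, which is controlled by Dedekind sums and is what forces the split into the two cases according to the parity of $c$ or $d$. Reconciling the $c$-odd and $d$-odd normalizations, tracking the factor $-i$, and checking consistency on the generators of $SL(2,\Z)$ is where all the care is needed; by comparison the coordinate computations and the periodicity relations above are entirely routine.
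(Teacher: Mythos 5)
Your strategy is not the one behind the paper's treatment of this statement: the Proposition is quoted from Corollary 2 of \cite{Yang}, whose proof goes through the transformation formula for the Jacobi theta function $\vartheta_1(z|\tau)$, and the paper itself offers no proof beyond that citation. What you do instead is exactly the ``alternative approach'' sketched in the Remark following the Proposition, namely factoring the Siegel function as (Klein form)$\times\eta^2$ and using that the Klein form transforms without multiplier. The routine parts of your argument are correct: the shifting relations do follow from $B(x+1)=B(-x)$, from $B(-g/N)-B(g/N)=2g/N$, and from $1-q^{-g}=-q^{-g}(1-q^{g})$; the $c=0$ case is immediate from the prefactor $q^{NB(g/N)/2}$; and for $c>0$ the conjugation $N\gamma\tau=\alpha(N\tau)$ with $\alpha=\left(\begin{smallmatrix}a&bN\\ c&d\end{smallmatrix}\right)\in SL(2,\Z)$, the computation $(g/N,0)\alpha=(ag/N,\,bg)$, and the integer periodicity in the second index correctly produce the factor $e^{\pi i(g^2ab/N-gb)}$ and reduce the claim to a multiplier $\varepsilon(\alpha)$ depending only on $\alpha$.

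The genuine gap is the step you explicitly defer: identifying $\varepsilon(\alpha)$ with the displayed $\epsilon(a,bN,c,d)$. This is not a cosmetic normalization to be ``reconciled''; it is the entire content of \eqref{Eg formula} beyond the generic Kubert--Lang statement that $\sg_v\circ\alpha$ equals a root of unity times $\sg_{v\alpha}$. Concretely, $\varepsilon(\alpha)$ is the multiplier of $\eta(\tau)^2$, classically expressible through the Dedekind sum $s(d,c)$, and one must convert that expression (via Dedekind reciprocity, or Weber's closed formulas cited in the Remark) into the two stated exponentials according to the parity of $c$ or $d$, including the factor $-i$ and a check that the exponents are well defined under the ambiguities in $a$, $b$, $d$ modulo $c$. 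Checking consistency on generators of $SL(2,\Z)$, as you propose, does not by itself suffice either, since $\epsilon$ is not claimed to be a homomorphism and the two-case formula must be verified to be coherent across the parity split. Until this computation is done, your argument proves \eqref{Eg formula} only up to an undetermined twelfth root of unity, which is too weak for the way the Proposition is used later: the modularity criterion of Proposition \ref{proposition: Gamma1 invariant} rests on the exact value of $\epsilon(a,bN,c,d)$, not merely on its existence.
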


\noindent{\bf Remark.} The proof of this property given in \cite{Yang}
  used the transformation formula for the Jacobi theta function
  $\vartheta_1(z|\tau)$. An alternative approach will be to use the
  fact that the Siegel function is equal to the product of the Klein
  form and $\eta(\tau)^2$. Then from the fact that the Klein form is a
  form of weight $-1$ on $SL(2,\Z)$ and the transformation formula for
  $\eta(\tau)$ (\cite[pp.~125--127]{Weber}), one can deduce the above
  formula for $E_g$. The advantage of the argument given in
  \cite{Yang} is that it can be generalized to the case of Jacobi
  forms (see \cite{Eichler-Zagier-Jacobi} for the definition and
  properties of Jacobi forms) and the Riemann theta functions of
  higher genus.
\medskip

From the transformation law, we immediately obtain sufficient
conditions for a product $\prod_g E_g^{e_g}$ to be modular on
$\Gamma_1(N)$.

\begin{Proposition}[{\cite[Corollary 3]{Yang}}]
  \label{proposition: Gamma1 invariant} Consider a
  function $f(\tau)=\prod_g E_g(\tau)^{e_g}$, where $g$ and $e_g$ are
  integers with $g$ not divisible by $N$. Suppose that one has
\begin{equation}
\label{Corollary 3: condition 1}
  \sum_g e_g\equiv 0\mod 12, \qquad \sum_g ge_g\equiv 0\mod 2
\end{equation}
and
\begin{equation}
\label{Corollary 3: condition 2}
  \sum_g g^2e_g\equiv 0\mod 2N.
\end{equation}
Then $f$ is a modular function on $\Gamma_1(N)$.

Furthermore, for the cases where $N$ is a positive odd integer,
conditions (\ref{Corollary 3: condition 1}) and
(\ref{Corollary 3: condition 2}) can be reduced to
$$
  \sum_g e_g\equiv 0\mod 12
$$
and
$$
  \sum_g g^2e_g\equiv 0\mod N,
$$
respectively.
\end{Proposition}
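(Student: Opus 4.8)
The plan is to verify directly that $f(\gamma\tau)=f(\tau)$ for every $\gamma\in\Gamma_1(N)$. Since each $E_g$ is holomorphic and nonvanishing on the upper half-plane, $f$ is automatically meromorphic at the cusps, so $\Gamma_1(N)$-invariance is the only thing to establish. Because $\Gamma_1(N)\subset\Gamma_0(N)$, I can feed every such $\gamma$ into the transformation law of Proposition~\ref{proposition: Eg formulas} and need not pass to a generating set. The first observation is that for $\gamma=\begin{pmatrix}a&b\\cN&d\end{pmatrix}\in\Gamma_1(N)$ one has $a\equiv1\pmod N$, whence $ag\equiv g\pmod N$ and $E_{ag}=E_g$ by \eqref{shifting for Eg}. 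Thus the indices are preserved and the only possible obstruction to invariance is the scalar multiplier picked up by the product. I split the verification according to whether the lower-left entry vanishes; for a nonzero lower-left entry I may assume it is positive, since replacing $\gamma$ by $-\gamma$ gives the same M\"obius action and costs only a factor $(-1)^{\sum_g e_g}=1$.

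For the generic case (lower-left entry $cN>0$), substituting \eqref{Eg formula} gives
$$
  f(\gamma\tau)=\epsilon(a,bN,c,d)^{\sum_g e_g}\,
  \exp\Bigl(\pi i\bigl(\tfrac{ab}{N}\textstyle\sum_g g^2e_g-b\sum_g ge_g\bigr)\Bigr)\,f(\tau),
$$
so I must show the prefactor equals $1$, and this bookkeeping is the main obstacle. I treat the three pieces in turn. The exponential of $\frac{ab}{N}\sum_g g^2e_g$ is trivial because \eqref{Corollary 3: condition 2} makes $\frac1N\sum_g g^2e_g$ an \emph{even} integer; the exponential of $-b\sum_g ge_g$ is trivial because the second congruence in \eqref{Corollary 3: condition 1} makes $\sum_g ge_g$ even. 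The delicate point is that $\epsilon(a,bN,c,d)$ is a $12$th root of unity: in the $c$-odd branch it is literally $e^{\pi i m/6}$ for an integer $m$, while in the $d$-odd branch the factor $-i=e^{-\pi i/2}=e^{-3\pi i/6}$ merges into the exponent to again produce $e^{\pi i m'/6}$. Since $e^{\pi i/6}$ has order $12$, we get $\epsilon^{12}=1$, and the first congruence in \eqref{Corollary 3: condition 1}, namely $12\mid\sum_g e_g$, kills the $\epsilon$-factor. The subtlety I must be careful about is precisely that $\epsilon$ is a $12$th (not $24$th) root of unity, so that the single hypothesis $12\mid\sum_g e_g$ is exactly what is needed, and that the two exponential contributions cancel outright rather than merely up to sign.

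For the remaining case (lower-left entry $0$), the only elements of $\Gamma_1(N)$ are the translations $T^b$, since $N\ge5$ forces $a=d=1$; the $c=0$ clause of Proposition~\ref{proposition: Eg formulas} then yields the multiplier $\exp\!\bigl(\pi i bN\sum_g B(g/N)e_g\bigr)$. Writing $NB(g/N)=\frac1N g^2-g+\frac N6$, the exponent becomes $\frac1N\sum_g g^2e_g-\sum_g ge_g+\frac N6\sum_g e_g$, each summand of which is even by the same three congruences, completing the proof of the general criterion. For the odd-$N$ refinement I would exploit the freedom in the choice of representatives: when $N$ is odd, $g$ and $g+N$ have opposite parities and $E_g=-E_{g+N}$ by \eqref{shifting for Eg}, so replacing every odd index by $g+N$ alters $f$ only by an overall constant sign, irrelevant to modularity. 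After this substitution every index is even, so $\sum_g ge_g$ is automatically even and the second congruence in \eqref{Corollary 3: condition 1} is free; moreover each square is divisible by $4$, so $\sum_g g^2e_g$ is even, and combining this with the hypothesis $\sum_g g^2e_g\equiv0\pmod N$ via $\gcd(2,N)=1$ upgrades it to $\sum_g g^2e_g\equiv0\pmod{2N}$, recovering \eqref{Corollary 3: condition 2}. Applying the general criterion to the modified product then gives the reduced conditions.
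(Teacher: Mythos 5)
The paper offers no proof of this proposition---it is quoted verbatim from \cite[Corollary 3]{Yang}---so your plan of verifying invariance directly from the transformation law of Proposition~\ref{proposition: Eg formulas} is the natural (indeed the only available) route, and most of it is carried out correctly: the observation that $\epsilon(a,bN,c,d)$ is a $12$th root of unity in both branches, the disposal of the two exponential factors via \eqref{Corollary 3: condition 1} and \eqref{Corollary 3: condition 2}, the $c=0$ case via $NB(g/N)=g^2/N-g+N/6$, and the odd-$N$ reduction by shifting odd indices to $g+N$ are all sound.

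There is, however, one genuine slip at the opening of the generic case. From $a\equiv 1\pmod N$ you conclude that ``$E_{ag}=E_g$ by \eqref{shifting for Eg}'', but \eqref{shifting for Eg} says $E_{g+N}=-E_g$, hence $E_{g+kN}=(-1)^kE_g$ and therefore $E_{ag}=(-1)^{(a-1)g/N}E_g$. The product consequently acquires a fourth scalar factor, $(-1)^{((a-1)/N)\sum_g ge_g}$, which your enumeration of ``three pieces'' omits; as written the multiplier computation is incomplete. The omission is harmless but must be repaired: since $(a-1)/N\in\Z$ for $\gamma\in\Gamma_1(N)$ and $\sum_g ge_g\equiv 0\pmod 2$ by \eqref{Corollary 3: condition 1}, this extra sign equals $1$---the same hypothesis you already invoke to kill $e^{-\pi ib\sum_g ge_g}$. (This is exactly the sign the paper does track elsewhere, in the proof of Lemma~\ref{lemma: level lowering}, where the factor $\prod(-1)^{ge_g(a-1)/N}$ appears explicitly.) The same care is needed in your reduction to a positive lower-left entry: besides the factor $(-1)^{\sum_ge_g}$ coming from $E_{-g}=-E_g$, the shift signs $(-1)^{(1-a)g/N}$ reappear there and are again absorbed by the evenness of $\sum_g ge_g$. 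With these signs accounted for, your proof is complete.
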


The following proposition gives the order of $E_g$ at cusps of $X_1(N)$.

\begin{Proposition}[{\cite[Lemma 2]{Yang}}]
\label{proposition: behavior of Eg}
The order of the function $E_g$ at a cusp $a/c$ of $X_1(N)$ with
$(a,c)=1$ is $(c,N)B_2(ag/(c,N))/2$, where $B_2(x)=\{x\}^2-\{x\}+1/6$
and $\{x\}$ denotes the fractional part of a real number $x$.
\end{Proposition}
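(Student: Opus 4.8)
The plan is to reduce the order at an arbitrary cusp $a/c$ to the behaviour of a single Siegel function at $\infty$, which can be read off directly from the product expansion recorded in Subsection~\ref{subsection: notations}. At the cusp $\infty=1/0$ the assertion is immediate: since $E_g(\tau)=q^{NB(g/N)/2}\prod_{n\ge1}(1-q^{(n-1)N+g})(1-q^{nN-g})$ and the product tends to $1$ as $\tau\to i\infty$, the order is $NB(g/N)/2$, which equals the claimed value $\tfrac12(c,N)B_2(ag/(c,N))$ upon setting $a=1$ and $(c,N)=(0,N)=N$ and noting $B(g/N)=B_2(g/N)$ for $0<g<N$. For a cusp $a/c$ with $c>0$ and $(a,c)=1$, I would choose $\gamma=\begin{pmatrix}a&b\\c&d\end{pmatrix}\in SL(2,\Z)$, which carries $\infty$ to $a/c$, and analyze $E_g(\gamma\tau)=-\sg_{(g/N,0)}(N\gamma\tau)$ as $\tau\to i\infty$. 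The only real subtlety is that the argument is scaled by $N$, so $N\gamma\tau$ is not an $SL(2,\Z)$-translate of $\tau$; the first task is to disentangle this scaling from the $SL(2,\Z)$-action.

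To this end I would put the integral matrix $\begin{pmatrix}N&0\\0&1\end{pmatrix}\gamma=\begin{pmatrix}Na&Nb\\c&d\end{pmatrix}$, of determinant $N$, into Hermite normal form on the left: one finds $\gamma'\in SL(2,\Z)$ with $\gamma'^{-1}\begin{pmatrix}Na&Nb\\c&d\end{pmatrix}=\begin{pmatrix}\alpha&\beta\\0&\delta\end{pmatrix}$, where $\alpha\delta=N$. Because $(a,c)=1$ forces $(Na,c)=(N,c)$, the first column $\left(\begin{smallmatrix}Na\\c\end{smallmatrix}\right)$ equals $(c,N)$ times a primitive vector, and a short calculation identifies $\alpha=(c,N)$ and $\delta=N/(c,N)$, with the first column of $\gamma'$ being $\left(\begin{smallmatrix}Na/(c,N)\\c/(c,N)\end{smallmatrix}\right)$ (a primitive vector, hence completable to $SL(2,\Z)$). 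Writing $w=(\alpha\tau+\beta)/\delta$ and applying the standard $SL(2,\Z)$-transformation law of the Siegel functions, $\sg_{v}(\gamma'w)=\varepsilon\,\sg_{v\gamma'}(w)$ with $\varepsilon$ a root of unity (this law underlies Proposition~\ref{proposition: Eg formulas} and affects only the constant, not the order), I obtain $E_g(\gamma\tau)=-\varepsilon\,\sg_{v\gamma'}(w)$ with $v=(g/N,0)$. The first coordinate of the transformed index is $(g/N)\cdot Na/(c,N)=ag/(c,N)$, while the second coordinate is a rational number that will play no role in the leading exponent.

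Finally I would read off the order. As $\tau\to i\infty$ we have $w\to i\infty$, so the product expansion of $\sg_{(b_1,b_2)}(w)$, after reducing $b_1=ag/(c,N)$ into $[0,1)$ via the quasi-periodicity of the Siegel functions and the resulting periodicity of $B_2$, has leading term a nonzero constant times $q_w^{B_2(ag/(c,N))/2}$; here one uses that $v\gamma'\notin\Z^2$ (since $v\notin\Z^2$ and $\gamma'\in SL(2,\Z)$), so that the factor $1-q_z$ contributes no extra vanishing even in the borderline case $b_1\in\Z$. Since $q_w=e^{2\pi i\beta/\delta}q_\tau^{\alpha/\delta}=(\text{const})\,q_\tau^{(c,N)^2/N}$, the leading exponent in $q_\tau$ is $\tfrac{(c,N)^2}{2N}B_2(ag/(c,N))$. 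Converting to the genuine local uniformizer at $a/c$ multiplies this by the width of the cusp, which for $\Gamma_1(N)$ equals $N/(c,N)$ (a standard stabilizer computation), and the two powers of $(c,N)$ combine to give exactly $\tfrac12(c,N)\,B_2(ag/(c,N))$, as claimed. I expect the matrix normalisation of the second step — correctly pinning down $\alpha=(c,N)$ from the condition $(a,c)=1$ — together with the width bookkeeping in the last step to be the main technical hurdles; by contrast the root-of-unity prefactors and the second index coordinate are irrelevant to the order and can be discarded throughout.
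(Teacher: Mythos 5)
Your argument is correct, and all the delicate points check out: the identification $\alpha=(c,N)$ in the Hermite factorization does follow from $(a,c)=1$ via $(Na,c)=(N,c)$; the width of $a/c$ on $\Gamma_1(N)$ (for $N\ge 5$) is indeed $N/(c,N)$ rather than the $\Gamma_0(N)$ value $N/(c^2,N)$, because the extra congruence $ach\equiv 0\pmod N$ combines with $N\mid c^2h$ and $(a,c)=1$ to force $N\mid ch$; and the leading coefficient is nonzero even when $ag/(c,N)\in\Z$ since $v\gamma'\notin\Z^2$. The resulting bookkeeping $\frac{N}{(c,N)}\cdot\frac{(c,N)^2}{N}\cdot\frac12 B_2(ag/(c,N))$ reproduces the stated formula, and it is consistent with the paper's numerical data (e.g.\ the entry $13B_2(1/13)/2=97/156$ in the $N=13$ example). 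Note that the paper itself offers no proof of this proposition --- it is quoted from \cite[Lemma 2]{Yang}, where the computation is organized around an explicit $SL(2,\Z)$-transformation formula for $E_g$ obtained from Jacobi theta functions; your route through the Siegel-function transformation law and a Hermite normal form decomposition of $\left(\begin{smallmatrix}N&0\\0&1\end{smallmatrix}\right)\gamma$ is the standard alternative and is equally valid, since only the exponent of $q_w$, and not the root-of-unity prefactors, matters for the order.
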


From the above proposition, we can easily see that when $N=p$ is a
prime, any quotient $E_{g_1}/E_{g_2}$ will have a divisor (with
rational coefficients) supported on $C_1^\infty(p)$. For general $N$,
after a moment of thought, we find the following condition for a
product $\prod_g E_g^{e_g}$ to have a divisor supported on
$C_1^\infty(N)$.

\begin{Lemma}[{\cite[Lemma 4.1]{Yu}}] \label{lemma: orbit condition}
  Let $N$ be a positive composite integer. If $f(\tau)=\prod_g
  E_g^{e_g}$ satisfies the orbit condition
$$
  \sum_{g\in\O_{a,p}}e_g=0
$$
for all $a\in\Z/N\Z$ and all prime divisors $p$ of $N$, then the poles
and zeros of $f(\tau)$ occur only at cusps lying over $\infty$ of
$X_0(N)$.
\end{Lemma}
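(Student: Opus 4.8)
The plan is to translate the orbit condition into the statement that the order of $f$ vanishes at every cusp of $X_1(N)$ that does \emph{not} lie over $\infty$ of $X_0(N)$. Recall that a cusp $a/c$ (in lowest terms) lies over $\infty$ of $X_0(N)$ precisely when $N\mid c$, that is, when $(c,N)=N$; every other cusp satisfies $(c,N)=M$ for some proper divisor $M$ of $N$. By Proposition~\ref{proposition: behavior of Eg}, the order of $f=\prod_g E_g^{e_g}$ at a cusp $a/c$ with $(c,N)=M$ is
$$
  \frac M2\sum_g e_g\,B_2\!\left(\frac{ag}M\right),
$$
so it suffices to prove that this sum vanishes for every proper divisor $M\mid N$ and every $a$.

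First I would record that the summand depends only on $g$ modulo $M$: since $B_2(x)=\{x\}^2-\{x\}+\frac16$ depends only on $x$ modulo $1$, the quantity $B_2(ag/M)$ depends only on the residue of $g$ modulo $M$. (In particular $B_2(-ag/M)=B_2(ag/M)$, so the summand is even in $g$ and is well defined on $(\Z/N\Z)/\pm1$, matching the indexing of the product.) The key observation is then that, because $M$ is a \emph{proper} divisor of $N$, one may choose a prime $p\mid N/M$; for this $p$ we have $M\mid N/p$, so $B_2(ag/M)$ in fact factors through the reduction map $\Z/N\Z\to\Z/(N/p)\Z$ and is therefore constant on each orbit $\O_{b,p}$.

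With this in hand the computation is immediate: grouping the sum over $g$ according to the orbits $\O_{b,p}$ and pulling the constant value of $B_2(ag/M)$ out of each orbit gives
$$
  \sum_g e_g\,B_2\!\left(\frac{ag}M\right)
  =\sum_{b}B_2\!\left(\frac{ab}M\right)\sum_{g\in\O_{b,p}}e_g=0,
$$
where the inner sums vanish by the orbit condition for the prime $p$. Hence the order of $f$ is zero at every cusp with $(c,N)=M<N$, and so the poles and zeros of $f$ are confined to the $\infty$-cusps, as claimed.

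I expect the only genuinely delicate point to be the reduction step in the second paragraph: one must verify that a proper divisor $M$ always admits a prime $p\mid N/M$ with $M\mid N/p$, so that the Bernoulli term descends \emph{exactly} to level $N/p$, which is the level at which the orbit condition is imposed. Everything else — the classification of cusps by $(c,N)$, the evenness of $B_2$, and the compatibility with the $\pm1$ identification — is routine once this alignment between the divisors $M$ and the orbit-defining primes $p$ has been set up correctly.
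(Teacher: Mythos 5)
Your proof is correct. Note that the paper itself gives no proof of this lemma --- it is quoted directly from Yu's paper (Lemma 4.1 there) --- so there is nothing internal to compare against; your argument, reducing the claim via Proposition~\ref{proposition: behavior of Eg} to the vanishing of $\sum_g e_g B_2(ag/M)$ for each proper divisor $M=(c,N)$ and then grouping the sum into $p$-orbits for a prime $p\mid N/M$ (so that $M\mid N/p$ and $B_2(ag/M)$ is constant on each orbit, using $B_2(x)=B_2(-x)$ mod $1$), is exactly the natural one and is sound. The only point left tacit is that $f$, being a product of Siegel functions, has no zeros or poles in the upper half-plane, so that confining the cuspidal divisor to the $\infty$-cusps is indeed the whole statement; this is immediate from the infinite product expansion of $E_g$.
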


In fact, Yu \cite{Yu}, cited as Theorem B below, showed that when $N$
has at least two distinct prime factors, the orbit condition is the
necessary and sufficient condition for $\prod_g E_g^{e_g}$ to be a
modular unit contained in $\FF_1^\infty(N)$. (For prime power cases
$N=p^n$, a function satisfying the orbit condition alone will still
have a divisor supported on $C_1^\infty(p^n)$, but the orders at cusps
may not be integers.)

Finally, we need the following distribution relations among $E_g$ and
among generalized Bernoulli numbers.

\begin{Lemma} \label{lemma: Bernoulli relation} Assume that $N=nM$ for
  some positive integer $n$. Then we have for all integers $a$ with
  $0<a<M$,
\begin{equation} \label{equation: Bernoulli relation}
  N\sum_{k=0}^{n-1}B_2\left(\frac{kM+a}N\right)
 =MB_2\left(\frac aM\right),
\end{equation}
and consequently
$$
  \prod_{k=0}^{n-1}E_{kM+a}^{(N)}(\tau)=E_a^{(M)}(\tau).
$$ 
\end{Lemma}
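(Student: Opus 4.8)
The plan is to establish the two assertions in turn, deducing the product identity from the Bernoulli relation together with the infinite product expansion of $E_a^{(N)}$ recorded in the previous subsection.

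For the Bernoulli relation \eqref{equation: Bernoulli relation}, I would first note that for $0<a<M$ and $0\le k\le n-1$ one has $0<(kM+a)/N<1$ and $0<a/M<1$, so on all of the relevant arguments the periodic function $B_2$ coincides with the polynomial $B(x)=x^2-x+1/6$, and I may work with $B$ throughout. Writing $(kM+a)/N=a/N+k/n$ (using $N=nM$), the identity to be shown is the degree-two case of the classical multiplication theorem for Bernoulli polynomials, namely $\sum_{j=0}^{n-1}B(t+j/n)=n^{-1}B(nt)$ evaluated at $t=a/N$, for which $nt=a/M$. Rather than invoke this, I would expand directly: splitting $B(a/N+k/n)$ into its three monomials and summing over $k$ via $\sum_{k=0}^{n-1}k=n(n-1)/2$ and $\sum_{k=0}^{n-1}k^2=n(n-1)(2n-1)/6$, the three constant contributions collapse to $1/(6n)$, and one lands exactly on $n^{-1}B(a/M)=nt^2-t+1/(6n)$. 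Multiplying through by $N$ then gives \eqref{equation: Bernoulli relation}.

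For the product formula I would compare the two sides factor by factor. The leading power of $q$ on the left is $q^{(N/2)\sum_{k=0}^{n-1}B((kM+a)/N)}$, which by the Bernoulli relation just proved equals $q^{(M/2)B(a/M)}$, matching the leading power of $E_a^{(M)}$. For the two infinite products, I would substitute $N=nM$ into the exponents. The ``positive'' factors on the left carry exponents $(j-1)N+(kM+a)=((j-1)n+k)M+a$ with $j\ge 1$ and $0\le k\le n-1$; since $(j,k)\mapsto(j-1)n+k$ is a bijection onto the nonnegative integers, these exponents run through $\{\ell M+a:\ell\ge 0\}$, each exactly once, which are precisely the exponents $(m-1)M+a$, $m\ge 1$, occurring in $E_a^{(M)}$. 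Likewise the ``negative'' factors carry exponents $jN-(kM+a)=(jn-k)M-a$, and $(j,k)\mapsto jn-k$ is a bijection onto the positive integers, reproducing the exponents $mM-a$, $m\ge 1$, of $E_a^{(M)}$. Matching factors term by term yields the identity; since $E_a^{(N)}=-\sg_{(a/N,0)}(N\tau)$ absorbs the sign prefactor of the Siegel function, no spurious constant appears.

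The computations are entirely routine, and I do not anticipate a genuine difficulty; the one point requiring care is the two reindexing bijections, which I would verify by the division algorithm, writing $\ell=(j-1)n+k$ and $\ell=jn-k$ as base-$n$ quotient--remainder decompositions. I expect this bookkeeping, together with the check that all arguments of $B_2$ lie in $(0,1)$ so that the periodic function equals the polynomial, to be the only substantive part of the argument.
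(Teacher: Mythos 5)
Your proposal is correct and follows exactly the route the paper intends: the paper's own proof simply declares the identity \eqref{equation: Bernoulli relation} a ``straightforward computation'' and says the product formula ``follows immediately'' from it and the definition of $E_a^{(M)}$, and your expansion of $B(t+k/n)$ via the power sums $\sum k$ and $\sum k^2$, together with the two division-algorithm reindexings of the $q$-product, supplies precisely those omitted details. The only point worth noting is that your verification that every argument of $B_2$ lies in $(0,1)$ (so the periodic function agrees with the polynomial) is a genuine hypothesis-check the paper leaves implicit.
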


\begin{proof} The proof of the first statement is a straightforward
  computation. Then the second statement follows immediately from
  \eqref{equation: Bernoulli relation} and the definition of
  $E_a^{(M)}$.
\end{proof}

\noindent{\bf Remark.} Relation \eqref{equation: Bernoulli relation}
 shows that the set of all numbers $NB_2(a/N)$ forms a distribution,
 called the \emph{Bernoulli distribution}. See \cite[Chapter
 12]{Washington} and \cite[Chapter 1]{Kubert-Lang-book}.
\end{subsection}

\begin{subsection}{Method for prime power cases} Let $N=p^n$ be a
  prime power. In this section we will explain how to use the
  following result of Yu to obtain a basis for $\FF_1^\infty(N)$.
  Note that in the formula, if an imprimitive Dirichlet character $\chi$
  modulo $N$ has conductor $f$, we let $\chi_f$ denote the character
  modulo $f$ that induces $\chi$.

\begin{TheoremA}[{\cite[Theorem 5]{Yu}}] \label{theorem: Yu} Let $N\ge
  5$ be a positive integer and $\omega(N)$ be the number of distinct
  prime divisors of $N$. For a prime divisor $p$ of $N$, let
  $p^{n(p)}$ be the highest power of $p$ dividing $N$. We have the
  class number formula
  \begin{equation} \label{equation: Yu's formula}
    h_1^\infty(N)=\prod_{p|N}p^{L(p)}
    \prod_{\chi\neq\chi_0\text{ even}}\left(\frac14B_{2,\chi_f}
    \prod_{p|N}(1-p^2\chi_f(p))\right)
  \end{equation}
  for $h_1^\infty(N)=|\DD_1^\infty(N)/\div\FF_1^\infty(N)|$, where
  $$
    L(p)=\begin{cases}
    \phi(N/p^{n(p)})(p^{n(p)-1}-1)-2n(p)+2, &\text{if }\omega(N)\ge 2, \\
    p^{n-1}-2n+2, &\text{if }N=p^n\text{ and }p\text{ is odd}, \\
    2^{n-1}-2n+3, &\text{if }N=2^n\ge 8,
    \end{cases}
  $$
  and the product $\prod_{\chi}$ runs over all even non-principal
  Dirichlet characters modulo $N$.
\end{TheoremA}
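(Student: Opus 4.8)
\medskip
\noindent\textbf{Proof proposal.}\ The plan is to interpret $h_1^\infty(N)$ as the index $[\DD_1^\infty(N):\div\FF_1^\infty(N)]$ of two lattices of the same rank $\phi(N)/2-1$ inside $\DD_1^\infty(N)\otimes\Q$, and to evaluate it by diagonalizing the divisor map under the natural action of $(\Z/N\Z)^\times/\{\pm1\}$. First I would fix representatives of the $\infty$-cusps: they are the $a/c$ with $N\mid c$, one for each class $a\in(\Z/N\Z)^\times/\{\pm1\}$, so there are $\phi(N)/2$ of them and $\DD_1^\infty(N)$ is the degree-zero sublattice of $\Z^{\phi(N)/2}$. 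By Proposition~\ref{proposition: behavior of Eg} the order of $E_g$ at the cusp labelled by $a$ equals $\tfrac N2 B_2(\{ag/N\})$, so the divisor of a product $\prod_g E_g^{e_g}$ is recorded by the ``Bernoulli matrix'' with these entries. By Proposition~\ref{proposition: Gamma1 invariant} and Lemma~\ref{lemma: orbit condition} the group $\FF_1^\infty(N)$ is cut out, inside the free group on the symbols $E_g$, by the congruences \eqref{Corollary 3: condition 1}, \eqref{Corollary 3: condition 2} together with the orbit condition; thus the problem becomes the computation of the index of the image lattice of this constrained exponent module under the Bernoulli matrix.

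For the transcendental (Bernoulli) part of the index I would use that $(\Z/N\Z)^\times$ acts on the cusps by $a\mapsto ua$ and on the symbols $E_g$ by $g\mapsto ug$ compatibly (cf.\ \eqref{Eg formula}), so the computation diagonalizes over the Dirichlet characters modulo $N$. The quotient by $\{\pm1\}$ together with the evenness of $B_2$ leaves only the even characters, while the degree-zero constraint removes $\chi_0$. Using only the functions $E_g$ with $(g,N)=1$ gives, for each surviving $\chi$, an eigenvalue $\tfrac N2\sum_a\chi(a)B_2(\{a/N\})=\tfrac12 B_{2,\chi}$ with $\chi$ regarded as an imprimitive character modulo $N$. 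Incorporating the remaining generators, namely the lower-level functions $E_{g/d}^{(N/d)}(d\tau)$, and rewriting everything through the primitive character $\chi_f$ by means of the distribution relation of Lemma~\ref{lemma: Bernoulli relation} replaces the imprimitive $B_{2,\chi}$ by the primitive $B_{2,\chi_f}$ equipped with the Euler factors $\prod_{p\mid N}(1-p^2\chi_f(p))$; the power $p^2$ rather than $p$ is the signature of the weight-two second Bernoulli distribution underlying the orders of the $E_g$. Collecting over all even non-principal $\chi$ gives $\prod_{\chi\neq\chi_0\text{ even}}\bigl(\tfrac14 B_{2,\chi_f}\prod_{p\mid N}(1-p^2\chi_f(p))\bigr)$, the residual power of $2$ being recorded in the local analysis below.

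The remaining factor $\prod_{p\mid N}p^{L(p)}$ is the delicate part, and I would compute it $p$-locally as a Smith-normal-form index. It measures the discrepancy between the naive lattice cut out by \eqref{Corollary 3: condition 1}, \eqref{Corollary 3: condition 2} and the orbit condition on one hand, and the genuine lattice $\div\FF_1^\infty(N)$ of integral divisors of honest modular units on the other. At each prime $p\mid N$ one must track two contributions: the integrality defect coming from the $N$-scaled, half-integral orders $\tfrac N2 B_2(\{ag/N\})$ of the $E_g$ (recall that only their $12N$-th powers are manifestly modular on $\Gamma_1(N)$), and the corank that the relations $\sum_{g\in\O_{a,p}}e_g=0$ impose among the $E_g$ sharing a common $p$-orbit. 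The closed form of $L(p)$, and the reason it takes three different shapes according to whether $\omega(N)\ge 2$, $N=p^n$ with $p$ odd, or $N=2^n$, reflects that the orbit condition of Lemma~\ref{lemma: orbit condition} is necessary and sufficient only when $N$ has at least two prime factors; for $N=p^n$ one must separately account for functions whose orbit sums vanish but whose cusp orders are not yet integral, and the boundary terms $-2n(p)+2$ (and the extra $+1$ when $p=2$) come from the roles of $\pm1$, of the constants $\C^\times$, and of the $2$-adic parity just mentioned.

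I expect this last, $p$-local index to be the main obstacle: the character diagonalization is essentially formal once the $(\Z/N\Z)^\times$-action is in place, but extracting the exponent $L(p)$ demands a careful Smith-normal-form analysis of the integral divisor map restricted to each $p$-orbit, including the subtle boundary corrections and the residual powers of $2$. A clean way to organize the whole argument would be to transport everything into the group ring $\Z[(\Z/N\Z)^\times/\{\pm1\}]$ and read off both the determinant and the local indices from the Stickelberger-type element attached to the second Bernoulli distribution, which simultaneously encodes the $B_{2,\chi_f}$, the Euler factors, and the $p$-power corrections.
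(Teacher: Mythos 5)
First, a point of reference: the paper does not prove Theorem~A at all. It is imported verbatim from Yu's article \cite{Yu} (with corrections to two misprints and to the $N=2^n$ case), and the paper's logic runs in the opposite direction -- Theorem~A is the \emph{input} used to certify that the explicit functions of Section \ref{section: prime powers} generate $\FF_1^\infty(N)$. So your proposal is really being measured against Yu's proof rather than against anything in this paper. That said, your outline does follow the standard Kubert--Lang/Yu template, and the character-diagonalization half is sound: it is exactly the content of Lemma~\ref{lemma: Bernoulli matrix}, and the passage from the imprimitive $B_{2,\chi}$ to $B_{2,\chi_f}$ times Euler factors is the conductor relation \eqref{equation: conductor relation} proved in Theorem~A$'$. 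Do watch the bookkeeping, though: summing over $(\Z/N\Z)^\times/\{\pm1\}$ rather than over all of $(\Z/N\Z)^\times$ is what produces the eigenvalue $\tfrac14 B_{2,\chi}$, not $\tfrac12 B_{2,\chi}$ as you write; and the Euler factor the theorem requires is $1-\chi_f(p)p^2$, which does not come from the distribution relation for the $E_g$ of level $N$ alone but from incorporating the lower-level generators -- you name this step but do not carry it out.

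The genuine gap is the exponent $L(p)$. The entire non-formal content of Theorem~A, beyond the eigenvalue computation, is the exact value of $\prod_{p|N}p^{L(p)}$, i.e.\ the index of the lattice of divisors of honest modular units inside the lattice cut out by the congruence and orbit conditions, and your proposal explicitly defers this (``I expect this last, $p$-local index to be the main obstacle'') without performing the Smith-normal-form analysis. This is not a step one can wave at: it is precisely where Yu's own published argument goes wrong, as the paper's remark after Theorem~A explains -- Lemma 3.4 of \cite{Yu} is valid only for $3\le\ell\le n$ when $N=2^n$, which changes $L(2)$ from $2^{n-1}-2n+2$ to $2^{n-1}-2n+3$ and would otherwise yield the absurd class number $\tfrac12$ for $N=8$. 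A proof that does not pin down $L(p)$, including this $2$-adic anomaly and the boundary terms $-2n(p)+2$, has not proved the theorem; it has reduced it to its hardest part.
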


\noindent{\bf Remark.} Note that the definition of the generalized
  Bernoulli numbers used in \cite{Yu} is different from ours. See
  Section \ref{subsection: notations} for details.

In \cite{Yu}, the number $L(p)$ in the class number formula is given
as
$$
  L(p)=\begin{cases}
    \phi(N/p^{n(p)})(p^{n(p)-1}-1)-2n(p)-2,
      &\text{if }N\text{ is composite}, \\
    p^{n-1}-2n-2, &\text{if }N=p^n>4. \\
    \end{cases}
$$
As pointed out in \cite{Hazama}, the minus sign in front of $2$ is an
obvious misprint. Also, the use of the term ``composite'' in \cite{Yu}
is somehow unconventional as it refers to an integer $N$ with
$\omega(N)>1$ throughout \cite{Yu}.

The discrepancy in the case $N=2^n$ is due to a slight oversight in
the proof of Lemma 3.4 of \cite{Yu}. Upon a close examination, one can
see that when $N=2^n$, Lemma 3.4 of \cite{Yu} is valid only in the
range $3\le\ell\le n$, not $2\le\ell\le n$. Note that when $N=2^3$,
Yu's formula actually gives
$$
  2^{2^2-6+2}\cdot\frac14\cdot 8\cdot\left(B_2(1/8)-B_2(3/8)
 -B_2(5/8)+B_2(7/8)\right)=\frac12
$$
as the class number, which clearly
can not be the correct number.
\medskip

\noindent{\bf Remark.} In \cite{Hazama}, Hazama gave another class
number formula for $h_1^\infty(N)$ for $N\not\equiv 0\mod 4$ in terms
of the so-called Demjanenko matrix. The new formula is more or less a
result of manipulation of the generalized Bernoulli numbers. It does
not give a new proof of Yu's formula. Nonetheless, Hazama's formula
will be useful in verifying the correctness of our numerical
computation. However, the reader should be mindful of several errors
when applying Hazama's formula.

In the statement of Theorem 3.1 of \cite{Hazama}, $f_i$ should be
defined as the multiplicative order of $p_i$ in
$(\Z/m_i\Z)^\times/\pm 1$, not $(\Z/m_i\Z)^\times$, and $e_i$ should
be $\phi(m_i)/(2f_i)$. Then $A_{p_i}(m)$ is simply
$(1+p_i^{f_i})^{e_i}/(1+p_i)$. The quantities $f$, $e$, $A(m)$
should be defined analogously. Moreover, as already pointed out in
\cite{Hazama2}, there is a discrepancy in the definition of the generalized
Bernoulli numbers. However, the author of \cite{Hazama2} still missed
the $1/2$ factor in Yu's definition \eqref{equation: Yu's Bernoulli}
of generalized Bernoulli numbers.
\medskip

Yu's formula can be slightly simplified.

\begin{TheoremAp} \label{theorem: Yu formula} Let all the
  notations be given as in Theorem A. For a prime divisor $p$ of $N$,
  we let $f_p$ denote the multiplicative order of $p$ in
  $(\Z/(N/p^{n(p)})\Z)^\times/\pm1$, and
  $e_p=|(\Z/(N/p^{n(p)})\Z)^\times/\pm1|/f_p$. Then the class number
  $h_1^\infty(N)$ is equal to
  \begin{equation} \label{equation: Yu formula 2}
    h_1^\infty(N)=\prod_{p|N}\frac{p^{L(p)}(1+p^{f_p})^{e_p}}{(1+p)}
    \cdot\prod_{\chi\neq\chi_0\text{ even}}\frac14B_{2,\chi},
  \end{equation}
  where the last product is taken over all even non-principal
  Dirichlet characters modulo $N$.
\end{TheoremAp}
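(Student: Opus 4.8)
The plan is to derive Theorem A$'$ directly from Theorem A by pulling the ``conductor-correction'' factors out of the Bernoulli numbers and then reorganizing the resulting product over characters into a product over the primes dividing $N$. The only external inputs needed are the conductor relation $B_{2,\chi}=B_{2,\chi_f}\prod_{p\mid N,\,p\nmid f}(1-\chi_f(p)p)$ recorded in Section \ref{subsection: notations}, the elementary observation that $\chi_f(p)=0$ whenever $p\mid f$, and the standard character factorization $\prod_{\psi}(1-X\psi(p))=(1-X^{f_p})^{e_p}$.

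First I would rewrite the per-character factor appearing in Theorem A. Solving the conductor relation for $B_{2,\chi_f}$, and using $\chi_f(p)=0$ for $p\mid f$ to replace $\prod_{p\mid N}(1-p^2\chi_f(p))$ by $\prod_{p\mid N,\,p\nmid f}(1-p^2\chi_f(p))$, each factor $\tfrac14B_{2,\chi_f}\prod_{p\mid N}(1-p^2\chi_f(p))$ becomes $\tfrac14B_{2,\chi}\prod_{p\mid N,\,p\nmid f}\frac{1-p^2\chi_f(p)}{1-p\chi_f(p)}$. Taking the product over all even nonprincipal $\chi$ modulo $N$ then separates Yu's formula into the factor $\prod_{p\mid N}p^{L(p)}$, the desired product $\prod_{\chi}\tfrac14B_{2,\chi}$, and a leftover double product $\prod_{\chi}\prod_{p\mid N,\,p\nmid f}\frac{1-p^2\chi_f(p)}{1-p\chi_f(p)}$ that must be shown to equal $\prod_{p\mid N}(1+p^{f_p})^{e_p}/(1+p)$.

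Next I would evaluate that double product by exchanging the order of the two products. For a fixed prime $p\mid N$, the condition $p\nmid f$ is exactly the condition that the $p$-component of $\chi$ be trivial, so the characters contributing to the inner product are precisely the even nonprincipal characters $\chi'$ modulo $N/p^{n(p)}$, and for these $\chi_f(p)=\chi'(p)$ since $\gcd(p,N/p^{n(p)})=1$. The even characters modulo $N/p^{n(p)}$ are exactly the characters of $(\Z/(N/p^{n(p)})\Z)^\times/\pm1$, in which the class of $p$ has order $f_p$ and index $e_p$. Applying the character factorization with $X=p^2$ and with $X=p$, the ratio over \emph{all} even characters (including the principal one) equals $\left(\frac{1-p^{2f_p}}{1-p^{f_p}}\right)^{e_p}=(1+p^{f_p})^{e_p}$; dividing out the principal character's factor $\frac{1-p^2}{1-p}=1+p$ leaves $(1+p^{f_p})^{e_p}/(1+p)$. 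Multiplying over $p\mid N$ yields the stated correction factor, and hence Theorem A$'$.

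I expect the main obstacle to be bookkeeping rather than depth: one must check that the even and nonprincipal conditions transfer correctly under the operation of forgetting the $p$-part, and that the degenerate cases $N/p^{n(p)}\in\{1,2\}$ (for instance $N=p^n$ or $N=2p^n$, where the group $(\Z/(N/p^{n(p)})\Z)^\times/\pm1$ is trivial) are consistent with the convention $f_p=e_p=1$, so that the correction factor equals $1$ precisely when there are no primes $p\mid N$ with $p\nmid f$. A related point to verify is that the restriction to even characters pairs correctly with the quotient by $\pm1$ in the definitions of $f_p$ and $e_p$, which is exactly what makes the orthogonality computation land on $(\Z/(N/p^{n(p)})\Z)^\times/\pm1$ rather than on the full unit group.
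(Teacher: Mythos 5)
Your proposal is correct and follows essentially the same route as the paper: apply the conductor relation to trade each $B_{2,\chi_f}$ for $B_{2,\chi}$ times the ratio $\prod_{p\mid N,\,p\nmid f}(1-p^2\chi_f(p))/(1-p\chi_f(p))$, swap the order of the products, and evaluate the inner product over the even characters of $(\Z/(N/p^{n(p)})\Z)^\times/\pm1$ using the fact that $\chi(p)$ runs through the $f_p$th roots of unity $e_p$ times, dividing out the principal character's factor $1+p$. The only cosmetic difference is that the paper proves the conductor relation in situ by inclusion--exclusion, whereas you cite it as an input.
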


\begin{proof} We first determine the relation between $B_{2,\chi}$ and
  $B_{2,\chi_f}$ for a character of conductor $f$. We have, by the
  exclusion-inclusion principle,
  $$
    B_{2,\chi}=N\sum_{a=1}^NB_2(a/N)\chi(a)
    =N\sum_{k|(N/f),~(k,f)=1}\mu(k)\sum_{a=1}^{N/k}B_2(ak/N)\chi_f(ak).
  $$
  The inner sum is equal to
  \begin{equation*}
  \begin{split}
  &\chi_f(k)\sum_{a=0}^{f-1}\chi_f(a)\sum_{m=0}^{N/(kf)-1}
     \left(\frac{(mf+a)^2k^2}{N^2}-\frac{(mf+a)k}N+\frac16\right) \\
  &\qquad=\chi_f(k)\sum_{a=0}^{f-1}\frac{kf}NB_2(a/f)\chi_f(a)
   =\frac kN\chi_f(k)B_{2,\chi_f}.
  \end{split}
  \end{equation*}
  It follows that
  \begin{equation} \label{equation: conductor relation}
    B_{2,\chi}=B_{2,\chi_f}\sum_{k|(N/f),~(k,f)=1}\mu(k)\chi_f(k)k
    =B_{2,\chi_f}\prod_{p|N,~p\nmid f}(1-\chi_f(p)p),
  \end{equation}
  and consequently,
  $$
    h_1^\infty(N)=\prod_{p|N}p^{L(p)}\prod_{\chi\neq\chi_0}
    \frac14B_{2,\chi}\prod_p\frac{1-\chi_f(p)p^2}{1-\chi_f(p)p}.
  $$
  Now for each prime factor $p$ of $N$, there are precisely
  $|(\Z/(N/p^{n(p)})\Z)^\times/\pm 1|$ even Dirichlet characters
  modulo $N$ whose conductors are relatively prime to $p$. As $\chi$
  runs over such characters, the values of $\chi(p)$ go through the
  complete set of $f_p$th roots of unity $e_p$ times. Therefore, for
  $a=1,~2$,
  $$
    \prod_{\chi\neq\chi_0}(1-\chi_f(p)p^a)
   =\frac{(1-p^{af_p})^{e_p}}{1-p^a},
  $$
  and \eqref{equation: Yu formula 2} follows.
\end{proof}

We now describe our method for prime power cases. Let $N=p^n$ be a
prime power greater than $4$, and set $n=\phi(N)/2$. The divisor group
$\DD_1^\infty(N)$ can be naturally embedded in the hyperplane
$x_1+\cdots+x_n=0$ inside $\R^n$ by sending a divisor $\sum c_n(P_n)$
of degree $0$ to $(c_1,\ldots,c_n)$, where $P_i$ denote the cusps in
$C_1^\infty(N)$. It is obvious that the image of $\DD_1^\infty(N)$ is
the lattice $\Lambda$ generated by
$V=\{(0,\ldots,0,1,-1,0,\ldots,0)\}$. Now if $f_1,\ldots,f_{n-1}$ are
multiplicatively independent modular units contained in
$\FF_1^\infty(N)$, then the images of the divisors of $f_i$ will
generate a sublattice $\Lambda'$ of $\Lambda$ of the same rank whose
index in $\Lambda$ can be determined using the following lemma.

\begin{Lemma} \label{lemma: lattice index} Let $\Lambda\subset\R^n$ be
  the lattice of dimension $n-1$ generated by the vectors of the form
  $(0,\ldots,1,-1,0,\ldots,0)$. Let $\Lambda'$ be a sublattice of
  $\Lambda$ of the same rank generated by $v_1,\ldots,v_{n-1}\in\Lambda$.
  Let $v_n=(c_1,\ldots,c_n)$ be any vector such that $\sum_ic_i\neq
  0$, and $M$ be the $n\times n$ matrix whose $i$th row is $v_i$. Then we
  have
  $$
    (\Lambda:\Lambda')=\left|\left(\sum_{i=1}^nc_i\right)^{-1}\det M\right|.
  $$
\end{Lemma}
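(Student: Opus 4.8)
The plan is to reduce the computation of $(\Lambda:\Lambda')$ to a determinant computation by exploiting the relationship between $\Lambda$ and the full integer lattice inside the hyperplane. Let me think about what makes this clean.

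First I would set up coordinates. The lattice $\Lambda$ lives in the hyperplane $H=\{x_1+\cdots+x_n=0\}$ and is generated by the consecutive-difference vectors $e_i-e_{i+1}$. A key observation is that these vectors generate exactly the lattice $\Lambda=H\cap\Z^n$, the full integral lattice of the hyperplane. This is the root lattice $A_{n-1}$, and it has covolume $\sqrt n$ with respect to the induced metric. I would verify the claim that $\{e_i-e_{i+1}\}$ generates $H\cap\Z^n$ directly: any integer vector with coordinate sum zero is an integer combination of the $e_i-e_{i+1}$ by a telescoping argument.

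Next, the core idea. The vectors $v_1,\dots,v_{n-1}$ lie in $H$, but $v_n=(c_1,\dots,c_n)$ does not, since $\sum_i c_i\neq 0$. I want to compare the $(n-1)$-dimensional index $(\Lambda:\Lambda')$ with the $n$-dimensional determinant $\det M$. The plan is to augment the picture by one dimension. I would consider the sublattice $\Lambda''$ of $\Z^n$ generated by $v_1,\dots,v_{n-1},v_n$; its index in $\Z^n$ is $|\det M|$, provided the $v_i$ form a basis of a full-rank sublattice, which holds because $\det M\neq 0$ (guaranteed by $\sum_i c_i\neq 0$ together with $v_1,\dots,v_{n-1}$ spanning $H$). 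The relation I want is a multiplicative one:
$$
  (\Z^n:\Lambda'')=(\Z^n:\Lambda\oplus\Z v_n^\flat)\cdot(\Lambda:\Lambda'),
$$
where the quotient $\Z^n/\Lambda$ is identified with $\Z$ via the degree map $(x_1,\dots,x_n)\mapsto\sum_i x_i$, and the image of $v_n$ under this map is $\sum_i c_i$. In other words, $\Z^n/\Lambda\cong\Z$ and the image of $v_n$ generates the subgroup $(\sum_i c_i)\Z$ of index $|\sum_i c_i|$. Combining, $|\det M|=(\Z^n:\Lambda'')=|\sum_i c_i|\cdot(\Lambda:\Lambda')$, which rearranges to the claimed formula.

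The step I expect to be the main obstacle is justifying the multiplicativity of indices cleanly, since $\Lambda'$ sits inside $\Lambda$ (an index $n-1$ rank object inside $H$) while $v_n$ points transversally out of $H$. The careful way to handle this is via the short exact sequence $0\to\Lambda\to\Z^n\xrightarrow{\deg}\Z\to 0$ and the snake-lemma/multiplicativity of indices for the filtration $\Lambda'\subset\Lambda+\Z v_n\cap(\text{relevant lattice})$; concretely, I would write $M$ in block form, perform row operations that express each $v_i$ ($i<n$) in the basis $\{e_1-e_2,\dots,e_{n-1}-e_n\}$ of $\Lambda$, and then do Laplace expansion along the relationship between $\det M$ and the $(n-1)\times(n-1)$ minor that computes $(\Lambda:\Lambda')$. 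The determinant of the change-of-basis matrix from $\{e_i-e_{i+1}\}\cup\{e_n\}$ (or any fixed complement) to the standard basis will produce precisely the factor $\sum_i c_i$, making the bookkeeping transparent and confirming that the transversal coordinate contributes the degree $\sum_i c_i$ as the only correction term.
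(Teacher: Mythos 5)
Your argument is correct in its essentials but follows a genuinely different route from the paper's. The paper works metrically: it appends the vector $v=(1/n,\dots,1/n)$ orthogonal to the hyperplane $H=\{x_1+\cdots+x_n=0\}$ to bases of $\Lambda$ and $\Lambda'$, invokes the fact that the index is the ratio $|\det A'/\det A|$ of the resulting $n\times n$ determinants, computes $\det A=1$, and then relates $\det M$ to $\det A'$ by row operations that turn the last row of $M$ into $(0,\dots,0,\sum_i c_i)$. You instead argue algebraically through the exact sequence $0\to\Lambda\to\Z^n\to\Z\to 0$ given by the degree map and multiplicativity of indices. This buys a proof with no Euclidean geometry and makes the role of $\sum_i c_i$ as $\deg v_n$ conceptually transparent; the price is the extra input $\Lambda=H\cap\Z^n$ (so that $\Z^n/\Lambda$ is torsion-free), which you correctly supply by the telescoping argument, plus one line you should make explicit: $\Lambda''\cap\Lambda=\Lambda'$, which holds because an element $\sum_i a_iv_i$ of $\Lambda''$ has degree $a_n\sum_i c_i$ and $\sum_i c_i\neq0$.

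One point does need repair. The lemma allows $v_n$ to be \emph{any} vector with $\sum_i c_i\neq 0$, and in every application in the paper its entries are non-integral rationals (e.g.\ $-3/4$, $-1/3$, $7/2$), so $\Lambda''=\langle v_1,\dots,v_n\rangle$ is not a subgroup of $\Z^n$ and the identity $(\Z^n:\Lambda'')=|\det M|$ does not literally apply. The fix is immediate: with $v_1,\dots,v_{n-1}$ fixed, $\det M$ is linear in $v_n$ and vanishes for $v_n$ in $H=\mathrm{span}(v_1,\dots,v_{n-1})$, hence $\det M=\kappa\cdot\sum_i c_i$ with $\kappa$ independent of $v_n$; so both sides of the asserted formula are unchanged if one replaces $v_n$ by a convenient integral vector of nonzero degree such as $e_n$, to which your argument applies verbatim. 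With that sentence added, your proof is complete.
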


\begin{proof} In general, to determine the index of a sublattice
  $\Lambda'$ in a lattice $\Lambda$ of codimension $1$ in $\R^n$, we
  pick a nonzero vector $v$ in $\R^n$ that is orthogonal to $\Lambda$,
  and form two matrices $A$ and $A'$, where the rows of $A$ are
  genenerators of $\Lambda$ and $v$, while those of $A'$ are
  generators of $\Lambda'$ and $v$. Then the index of $\Lambda'$ in
  $\Lambda$ is equal to $|\det A'/\det A|$.

  Now for the lattice $\Lambda$ generated by
  $(0,\ldots,1,-1,0,\ldots,0)$, we can choose the vector $v$ to be
  $(1/n,\ldots,1/n)$. Then the determinant of $A$ is $1$. On the other
  hand, for the matrix $M$ in the lemma, by adding suitable multiples
  of the first $n-1$ rows to the last row, we can bring the last row
  into $(0,\ldots,0,c)$. Since the sum of entries in each of the first
  $n-1$ rows is $0$, the number $c$ is equal to the sum $\sum_i
  c_i$. Note that this procedure does not change the determinant. By
  the same token, we can also transform the matrix $A'$ corresponding
  to $\Lambda'$ into a matrix whose first $n-1$ rows are $v_i$ and
  whose last row is $(0,\ldots,0,1)$ without change the determinant.
  From this, we see that $\det M=c\det A'$, and therefore
  $(\Lambda:\Lambda')=|c^{-1}\det M|$.
\end{proof}

Now let us consider the prime case $N=p$ for the moment. Assume that
$f_i(\tau)=\prod_j E_j^{e_{i,j}}$, $i=1,\ldots,(p-1)/2-1$, are
multiplicatively independent modular units in $\FF_1^\infty(p)$. Let
$M$ be the square matrix of size $(p-1)/2$ whose $(j,k)$-entry is the
order of $E_j$ at $k/p$, and $U=(u_{ij})$ be the square matrix of the
same size with
\begin{equation} \label{equation: temp u}
  u_{ij}=\begin{cases}e_{i,j}, &\text{if }1\le i\le(p-3)/2, \\
  0, &\text{if }i=(p-1)/2\text{ and }1\le j\le(p-3)/2, \\
  1, &\text{if }i=j=(p-1)/2.
  \end{cases}
\end{equation}
Then the product $UM$ will have the orders of $f_i$ at $k/p$ as its
$(i,k)$-entry for the first $(p-3)/2$ rows and the last row consists
of the orders of $E_{(p-1)/2}$ at $k/p$, which, by Proposition
\ref{proposition: behavior of Eg} are $pB_2(k(p-1)/p)/2$. Thus, by
Lemma \ref{lemma: lattice index}, the subgroup generated by the
divisors of $f_i$ will have index
$$
  \left|\left(\frac p2\sum_{k=1}^{(p-1)/2}
  B_2\left(\frac{k(p-1)}p\right)\right)^{-1}\det U\det M\right|
$$
in the full divisor group. In particular, $\{f_i\}$ generates
$\FF_1^\infty(p)$ if and only if this number is equal to
$h_1^\infty(p)$.

By the definition of generalized Bernoulli numbers, we have
$$
  \frac p2\sum_{k=1}^{(p-1)/2}B_2\left(\frac{k(p-1)}p\right)
 =\frac14B_{2,\chi_0}.
$$
We now determine $\det M$, which turns out to be essentially the
product of generalized Bernoulli numbers appearing in Theorem A.

\begin{Lemma} \label{lemma: Bernoulli matrix} Let $N\ge 4$ be an
  integer, $n=\phi(N)/2$, and
$$
  S=\{a_i:~i=1,\ldots,n,~1\le a_i\le N/2,~(a_i,N)=1\}.
$$
For an integer $b$ relatively prime to $N$, denote by $b^{-1}$ its
multiplicative inverse modulo $N$. Let $M$ be the $n\times n$ matrix
whose $(i,j)$-entry is $NB_2(a_ia_j^{-1}/N)/2$.
Then we have
$$
  \det M=\prod_\chi\frac14B_{2,\chi},
$$
where $\chi$ runs over all even characters modulo $N$.
\end{Lemma}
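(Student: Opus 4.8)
The plan is to recognize $M$ as a \emph{group matrix} (a Dedekind-type determinant) for the finite abelian group $G=(\Z/N\Z)^\times/\pm1$, and then to diagonalize it using the characters of $G$, which are exactly the even Dirichlet characters modulo $N$. The starting observation is that the periodic second Bernoulli function is even: since $B_2(1-x)=B_2(x)$, the value $B_2(a_ia_j^{-1}/N)$ depends only on the residue of $a_ia_j^{-1}$ modulo $N$ up to sign, i.e.\ only on the class of $a_ia_j^{-1}$ in $G$. Writing $c\colon G\to\R$ for the function $c(g)=\frac N2 B_2(g/N)$ (well-defined because $c(-g)=c(g)$), the set $S$ furnishes exactly one representative for each element of $G$, and the $(i,j)$-entry of $M$ is precisely $c(a_ia_j^{-1})$. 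Thus $M=[c(g_ig_j^{-1})]_{g_i,g_j\in G}$ is the group matrix attached to $c$.

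For the second step I would diagonalize $M$ by the standard character argument. For each character $\chi$ of $G$ consider the vector $v_\chi=(\chi(a_j))_j$. The substitution $h=g_ig_j^{-1}$ gives
$$
  (Mv_\chi)_i=\sum_j c(g_ig_j^{-1})\chi(g_j)
  =\chi(g_i)\sum_{h\in G}c(h)\overline{\chi(h)},
$$
so $v_\chi$ is an eigenvector with eigenvalue $\lambda_\chi=\sum_{h\in G}c(h)\overline{\chi(h)}$. Since $G$ is abelian, the $n=|G|$ characters give a full eigenbasis, whence $\det M=\prod_\chi\lambda_\chi$, the product running over all characters of $G$. Finally, a character of $(\Z/N\Z)^\times$ descends to $G=(\Z/N\Z)^\times/\pm1$ if and only if $\chi(-1)=1$, so the characters of $G$ are canonically the even Dirichlet characters modulo $N$; because the product over all even $\chi$ is stable under $\chi\mapsto\overline\chi$, I may replace $\overline\chi$ by $\chi$ throughout and write $\det M=\prod_{\chi\text{ even}}\sum_{g\in G}c(g)\chi(g)$.

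The remaining step is to evaluate each eigenvalue. Folding the sum over $G$ up to a sum over $(\Z/N\Z)^\times$ (legitimate since both $c$ and $\chi$ are even, which contributes one factor of $\frac12$) and unwinding the definition of $c$ gives
$$
  \sum_{g\in G}c(g)\chi(g)
  =\frac12\sum_{(a,N)=1}\frac N2 B_2(a/N)\chi(a)
  =\frac14\,N\sum_{a=1}^N\chi(a)B_2(a/N)
  =\frac14 B_{2,\chi},
$$
using $\chi(a)=0$ for $(a,N)>1$ and the definition $B_{2,\chi}=N\sum_{a=1}^N\chi(a)B_2(a/N)$. Multiplying over all even $\chi$ yields $\det M=\prod_\chi\frac14 B_{2,\chi}$, as claimed. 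The only genuinely delicate point is the bookkeeping of the two factors of $\frac12$ producing the constant $\frac14$: one comes from the normalization $c=\frac N2 B_2$ together with the $\frac1N$ in the definition of $B_{2,\chi}$, and the other from passing between $(\Z/N\Z)^\times$ and its quotient by $\pm1$. A convenient sanity check is $N=4$, where $M=[-1/24]$ and the only even character is $\chi_0$, giving $\frac14 B_{2,\chi_0}=-1/24$.
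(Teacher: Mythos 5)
Your proof is correct and is essentially the paper's argument in different packaging: the paper diagonalizes the operator $T=\sum_{a\in S}\frac N2B_2(a/N)T_a$ by comparing its matrix in the delta-function basis (which is $M$) with its matrix in the basis of even Dirichlet characters (which is diagonal with entries $\frac14B_{2,\chi}$), and your vectors $v_\chi$ are exactly those character eigenvectors. The bookkeeping of the two factors of $\frac12$ and the identification of characters of $(\Z/N\Z)^\times/\pm1$ with even characters modulo $N$ match the paper's computation.
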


\begin{proof} The proof is standard. We let $V$ be the vector space
  over $\C$ of all $\C$-valued functions $f$ on $(\Z/N\Z)^\times$
  satisfying $f(a)=f(-a)$ for all $a\in(\Z/N\Z)^\times$. There are two
  standard bases for $V$. One is $\{\delta_a:~a\in S\}$, where
  $$
    \delta_a(x)=\begin{cases}1,&\text{if }x=\pm a, \\
    0,&\text{else}, \end{cases}
  $$
  and the other is $\{\text{\,all even Dirichlet characters modulo }N\}$.
  For $a\in(\Z/N\Z)^\times$, define $T_a:V\to V$ by $T_af(x)=f(ax)$.
  Then $T_a$ is a linear operator on $V$. Consider
  $$
    T=\sum_{a\in S}\frac N2B_2(a/N)T_a.
  $$
  We have
  $$
    T\delta_b(x)=\sum_{a\in S}\frac N2B_2(a/N)\delta_b(ax)
   =\sum_{a\in S}\frac N2B_2(a/N)\delta_{a^{-1}b}(x),
  $$
  and hence
  $$
    T\delta_b=\sum_{c\in S}\frac N2B_2(bc^{-1}/N)\delta_c. 
  $$
  We find that the matrix of $T$ with respect to the first basis is
  $M$.

  On the other hand, we also have, for an even Dirichlet character
  $\chi$ modulo $N$,
  $$
    T\chi(x)=\sum_{a\in S}\frac N2B_2(a/N)\chi(ax)
   =\sum_{a\in S}\frac N2B_2(a/N)\chi(a)\chi(x)=\frac14B_{2,\chi}
    \chi(x).
  $$
  From this we see that the matrix of $T$ with respect to the second
  basis is diagonal and its determinant is the product
  $$
    \prod_\chi\frac14B_{2,\chi}
  $$
  of eigenvalues, which equals to the determinant of $M$. This
  proves the lemma.
\end{proof}

In summary, in the case $N=p$ is an odd prime, if
$e_{i,j}$, $i=1,\ldots,(p-3)/2$, $j=1,\ldots,(p-1)/2$, are integers
such that
$$
  \sum_{j=1}^{(p-1)/2}e_{i,j}=0, \qquad
  \sum_{j=1}^{(p-1)/2}j^2e_{i,j}\equiv 0\mod p,
$$
and the square matrix $U=(u_{ij})$ of size $(p-1)/2$ defined by
\eqref{equation: temp u} has determinant $p$, then according to
Theorem A, $f_i=\prod_j E_j^{e_{i,j}}$, $i=1,\ldots,(p-3)/2$, form a
basis for $\FF_1^\infty(p)$.

For prime power cases $N=p^n$, the basic idea is similar. We pick a
generator $a$ of $(\Z/p^n\Z)^\times/\pm 1$ and form a
$\phi(N)/2\times\phi(N)/2$ matrix $M$ with the $(i,j)$-entry being the
order of $E_{a^{i-1}}$ at $a^{j-1}/N$. Then we try to find another
$\phi(N)/2\times\phi(N)/2$ matrix $U$ such that
\begin{enumerate}
\item the first $\phi(N)/2-1$ rows of $UM$ has the interpretation as
  the orders of some functions in $\FF_1^\infty(N)$,
\item the last row of $U$ is $(0,\ldots,0,1)$,
\item the determinant of $U$ equals to $p^{L(p)}$.
\end{enumerate}
However, unlike the prime cases, the functions $E_g$ with $(g,N)=1$
will not be sufficient to generate the whole group $\FF_1^\infty(N)$
and it is necessary to use functions from lower level, i.e., functions
of the form
$E^{(p^\ell)}_g(p^{n-\ell}\tau)=E^{(N)}_{gp^{n-\ell}}(\tau)$ for some
$\ell<n$. To record the orders of such a function at cusps, we will
invoke the distribution relation \eqref{equation: Bernoulli relation}
in Lemma \ref{lemma: Bernoulli relation}. We leave the details to
Section \ref{subsection: prime powers}.
\end{subsection}

\begin{subsection}{Method for non-prime power cases} In this section,
  we explain our idea for non-prime power cases.

  In theory, it is still possible to use the same method as the prime
  power cases, but the argument will become extremely tedious. Thus,
  instead of using Theorem A and the linear algebra argument, we use
  the following characterization of $\FF_1^\infty(N)$ of Yu \cite{Yu}.

\begin{TheoremB}[{Yu, \cite[Lemma 2.1 and Theorem 4]{Yu}}] Let $N$ be a
  positive integer having at least two distinct prime divisors. Then,
  up to a scalar, $f(\tau)$ belongs to $\FF_1^\infty(N)$ if and only
  if $f(\tau)=\prod_g E_g^{e_g}$ is a product of $E_g$ satisfying
  \begin{equation} \label{condition: DO}
    \sum_{g\in\O_{a,p}}e_g=0
  \end{equation}
  for each prime divisor $p$ of $N$ and each $a\in\Z/N\Z$.
\end{TheoremB}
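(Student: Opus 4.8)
I would prove the two implications of the equivalence separately, holding in reserve the hypothesis $\omega(N)\ge2$, which is the decisive ingredient at the hardest step of each direction.

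\emph{Sufficiency of the orbit condition.} Suppose $f=\prod_g E_g^{e_g}$ satisfies \eqref{condition: DO}. That $\div f$ is supported on the $\infty$-cusps is precisely Lemma~\ref{lemma: orbit condition}, so the only remaining point is that $f$ is a genuine modular function on $\Gamma_1(N)$, i.e.\ that the congruences of Proposition~\ref{proposition: Gamma1 invariant} hold. Summing $\sum_{g\in\O_{a,p}}e_g=0$ over a set of representatives $a$ gives $\sum_g e_g=0$, hence the condition modulo $12$. For the remaining congruences $\sum_g g e_g\equiv0\pmod2$ and $\sum_g g^2e_g\equiv0\pmod{2N}$ I would use the orbit relations for \emph{all} primes $p\mid N$ simultaneously: fixing $p$ and letting $p^e$ be the exact power of $p$ dividing $N$, I write $g=r+k(N/p)$ inside a $p$-orbit, so that $\sum_k e_{r+k(N/p)}=0$ annihilates the constant term of $\sum_k(r+k(N/p))^2e_{r+k(N/p)}$ and leaves a residue carrying the factor $N/p=p^{e-1}(N/p^e)$; combining the residues contributed by the distinct primes, via the distribution relation of Lemma~\ref{lemma: Bernoulli relation}, clears the residual $p$-adic factor and yields divisibility by $2N$. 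This is exactly the step that makes the orders at cusps integral, and it is where $\omega(N)\ge2$ is essential: with a single prime the orbit relations are too sparse to remove the residual factor, which is the source of the fractional orders noted in the prime-power case.

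\emph{Necessity of the orbit condition.} Here I would first invoke the structural fact that every unit in $\FF_1^\infty(N)$ is, up to a multiplicative constant, a product $\prod_g E_g^{e_g}$ of Siegel functions with integer exponents; this is the Kubert--Lang generation theorem transported by the Atkin--Lehner involution, together with the injectivity of $\div$ modulo $\C^\times$, which pins $f$ down by its divisor. Writing $f=c\prod_g E_g^{e_g}$, Proposition~\ref{proposition: behavior of Eg} gives the order of $f$ at a cusp $a/c$ with $(c,N)=d$ as $\tfrac d2\sum_g e_g B_2(ag/d)$, and the hypothesis $f\in\FF_1^\infty(N)$ forces all of these orders to vanish for $d<N$. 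It thus remains to show that the vanishing of every such $B_2$-weighted sum is equivalent to \eqref{condition: DO}.

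This equivalence is the main obstacle, and I expect the bulk of the work to lie here. One direction is Lemma~\ref{lemma: orbit condition}. For the other, the clean entry point is the cusps with $(c,N)=N/p$: since $g\equiv g'\pmod{N/p}$ forces $B_2(ag/(N/p))=B_2(ag'/(N/p))$, the map $g\mapsto B_2(ag/(N/p))$ is constant on $p$-orbits, so the order of $f$ at such a cusp depends only on the orbit sums $\sum_{g\in\O_{b,p}}e_g$, with coefficient $B_2(ab/(N/p))$. Restricting the parameter $a$ to units modulo $N/p$, the relevant coefficient matrix is (a reindexing of) the Bernoulli matrix of Lemma~\ref{lemma: Bernoulli matrix} at level $N/p$, whose determinant $\prod_\chi\frac14B_{2,\chi}$ is nonzero; its nonsingularity recovers the orbit sums indexed by $b$ prime to $N/p$. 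The genuine difficulty is the remaining representatives $b$ with $\gcd(b,N/p)>1$, which are invisible at these cusps because $E_b$ is effectively of lower level; to reach them I would descend through the divisors $d\mid N$, peeling off at each stage the part of $\div f$ already accounted for at larger $d$ and using Lemma~\ref{lemma: Bernoulli relation} to relate the level-$N$ Bernoulli values to those of level $d$. Proving that this descent is nondegenerate --- that the full family of non-$\infty$ cusps carries enough independent $B_2$-data to recover \emph{every} orbit sum --- is the technical heart, and once more it is the presence of at least two prime factors that supplies enough cusps and enough distribution relations to make the inversion succeed.
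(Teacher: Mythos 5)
First, a point of order: the paper does not prove Theorem~B at all. It is quoted from Yu's paper (Lemma~2.1 there for the ``if'' direction, Theorem~4 for the ``only if'' direction) and is used in this article strictly as an external input, so there is no internal argument to measure yours against. Judged on its own terms, your proposal is a plausible reconstruction of the \emph{shape} of Yu's argument, but it is not a proof: at each of the two genuinely hard points you either cite the result outright or label it ``the technical heart'' and stop.

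Concretely, in the necessity direction the assertion that every element of $\FF_1^\infty(N)$ is, up to a constant, a product $\prod_g E_g^{e_g}$ with integer exponents is the deepest ingredient of the entire statement --- it is the Kubert--Lang generation theorem, an entire chapter of their book --- so invoking it narrows your task without discharging it; and the step you yourself identify as the crux, namely that the vanishing of the orders of $f$ at \emph{all} cusps $a/c$ with $c$ a proper divisor of $N$ forces \emph{every} orbit sum $\sum_{g\in\O_{a,p}}e_g$ to vanish (including those indexed by $a$ not prime to $N$), is left as a descent whose nondegeneracy is never established. The cusps with $(c,N)=N/p$ see, via the nonsingular Bernoulli matrix of Lemma~\ref{lemma: Bernoulli matrix}, only the orbit sums attached to residues invertible modulo $N/p$; recovering the remaining ones from lower-level cusp data is precisely where the content lies. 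In the sufficiency direction there is a smaller but real gap: expanding $\sum_g g^2e_g$ orbit by orbit as you indicate yields divisibility by $N/p$ for each $p\mid N$, hence by $N$ after combining two distinct primes, but Proposition~\ref{proposition: Gamma1 invariant} demands $2N$ when $N$ is even, and the condition $\sum_g ge_g\equiv0\pmod 2$ is not addressed at all; moreover the appeal to Lemma~\ref{lemma: Bernoulli relation} to ``clear the residual $p$-adic factor'' is misplaced, since that lemma is an identity among values of $B_2$ and says nothing about these quadratic congruences in the exponents. None of this shows your strategy is wrong --- it is, in outline, how the result is actually proved --- but as written the proposal is a roadmap rather than a proof, and the parts it omits are exactly the parts that required Yu's and Kubert--Lang's work.
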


  Our idea is perhaps best explained by giving an example.

\medskip

\noindent{\bf Example.} Consider $N=21$. Assume that
  $f(\tau)=\prod_{g=1}^{10}E_g^{e_g}$ is a modular unit in
  $\FF_1^\infty(21)$. Then the orbit condition \eqref{condition: DO}
  gives
  $$
    e_7=0, \quad e_3=-e_4-e_{10}, \quad e_6=-e_1-e_8, \quad e_9=-e_2-e_5,
  $$
  and
  $$
    e_1+e_2+e_4+e_5+e_8+e_{10}=0.
  $$
  Then we have
  $$
  f=\left(\frac{E_1}{E_6}\right)^{e_1}\left(\frac{E_2}{E_9}\right)^{e_2}
    \left(\frac{E_4}{E_3}\right)^{e_4}\left(\frac{E_5}{E_9}\right)^{e_5}
    \left(\frac{E_8}{E_6}\right)^{e_8}\left(\frac{E_{10}}{E_3}\right)^{e_{10}},
  $$
  subject to the condition $e_1+e_2+e_4+e_5+e_8+e_{10}=0$. Thus, if we
  let $F_i$, $i=1,2,4,5,8,10$, denote the $6$ quotients in the last
  expression, then $F_1/F_2$, $F_2/F_4$, $F_4/F_5$, $F_5/F_8$,
  $F_8/F_{10}$ will generate $\FF_1^\infty(21)$.
\medskip

  The above example shows that for a squarefree composite integer $N$,
  we may regard any $\phi(N)/2-1$ exponents $e_g$ from the set
  $\{e_g:~1\le g\le N/2,~(g,N)=1\}$ as ``free variables'' and express
  the rest of $e_g$ in terms of these free variables. This gives a
  basis for $\FF_1^\infty(N)$.

  When $N$ is not squarefree, the situation is much more complicated
  as there are relations among $\{e_g:~1\le g\le N/2,~(g,N)=1\}$ other
  than $\sum_{(g,N)=1}e_g=0$. For instance, when $N=63$, the orbit
  conditions include $e_1+e_{22}+e_{20}=0$, $e_2+e_{23}+e_{19}=0$, and
  so on. (The situation is reminiscent of the case of cyclotomic units
  where non-trivial relations exist among the units
  $1-e^{2\pi ik/N}$.) Then, again, modular units from modular curves
  of lower levels are needed to obtain a basis for $\FF_1^\infty(N)$.
  We leave the details to Section \ref{section: non-prime power}.

%
%
\end{subsection}
\end{section}

\begin{section}{Prime power cases} \label{section: prime powers}

\begin{subsection}{Prime cases} \label{subsection: primes} In this
  section, we consider the simplest case when the level is a prime.

\begin{Theorem} \label{theorem: prime} Let $N=p$ be an odd prime
  greater than $3$. Let $a$ be a generator of the cyclic group
  $(\Z/p\Z)^\times/\pm1$ and $b$ be its multiplicative inverse modulo
  $p$. Let $n=(p-1)/2$. Then the functions
  $$
    \frac{E_{a^{i-1}}E_{a^{i+1}}^{b^2}}{E_{a^i}^{1+b^2}},\quad
    i=1,\ldots,n-2,\quad\text{ and}\quad \frac{E_{b^2}^p}{E_b^p}
  $$
  generate $\FF_1^\infty(p)$ modulo $\C^\times$.
\end{Theorem}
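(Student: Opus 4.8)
The plan is to follow the linear-algebra scheme set up in Section~\ref{section: methodology}: first confirm that each of the $n-1$ listed functions genuinely lies in $\FF_1^\infty(p)$, then compute the index in $\DD_1^\infty(p)$ of the sublattice spanned by their divisors, and finally match this index against the class number $h_1^\infty(p)$ supplied by Theorem~A. Since $\div$ is injective modulo $\C^\times$, once the index equals $h_1^\infty(p)$ the divisors must exhaust $\div\FF_1^\infty(p)$, and the functions generate $\FF_1^\infty(p)$ modulo constants.

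To verify membership I would invoke the reduced form of Proposition~\ref{proposition: Gamma1 invariant} valid for odd level: a product $\prod_g E_g^{e_g}$ is modular on $\Gamma_1(p)$ as soon as $\sum_g e_g\equiv0\pmod{12}$ and $\sum_g g^2e_g\equiv0\pmod p$. In every listed function the exponents sum to $0$, which settles the first congruence and, by Proposition~\ref{proposition: behavior of Eg} (the order of $E_g$ at a cusp over $0$ equals $\frac1{12}\sum_g e_g$ independently of $g$), also forces the divisor to be supported on the $\infty$-cusps. The second congruence is exactly where the exponents $1,\,b^2,\,-(1+b^2)$ are tailor-made: since $b=a^{-1}$ gives $b^2=a^{-2}$, one has $(a^{i-1})^2+b^2(a^{i+1})^2-(1+b^2)(a^i)^2=a^{2i-2}+a^{2i}-a^{2i}-a^{2i-2}=0$, so the weighted square-sum telescopes to $0$; the final function $E_{b^2}^p/E_b^p$ has square-sum $p(b^4-b^2)\equiv0\pmod p$. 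Hence all $n-1$ functions lie in $\FF_1^\infty(p)$.

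For the index I would index the $E_{a^{j-1}}$ by powers of the generator $a$ and form the $n\times n$ Bernoulli matrix $M$ with $(i,j)$-entry the order $\frac p2 B_2(a^{i+j-2}/p)$ of $E_{a^{i-1}}$ at $a^{j-1}/p$; Lemma~\ref{lemma: Bernoulli matrix} gives $\det M=\prod_{\chi\text{ even}}\frac14 B_{2,\chi}=\frac14 B_{2,\chi_0}\prod_{\chi\neq\chi_0}\frac14 B_{2,\chi}$. Let $U$ be the $n\times n$ integer matrix whose first $n-1$ rows record the exponent vectors of the $n-1$ functions in the basis $\{E_{a^{j-1}}\}$ and whose last row is $(0,\dots,0,1)$. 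Then the first $n-1$ rows of $UM$ are the divisors of our functions, while the last row is the order vector of $E_{a^{n-1}}$, whose entries sum to $\frac p2\sum_{k=1}^{(p-1)/2}B_2(k/p)=\frac14 B_{2,\chi_0}$. Taking this last row as the auxiliary vector $v_n$ in Lemma~\ref{lemma: lattice index} yields the index $\bigl|(\frac14 B_{2,\chi_0})^{-1}\det U\,\det M\bigr|$; the factor $\frac14 B_{2,\chi_0}$ cancels, leaving $|\det U|\cdot\prod_{\chi\neq\chi_0\text{ even}}|\frac14 B_{2,\chi}|$.

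The crux, and the only genuine computation, is the determinant identity $\det U=\pm p$. Here the band structure does the work: rows $1,\dots,n-2$ carry the pattern $(\dots,1,-(1+b^2),b^2,\dots)$ with the leading $1$ on the diagonal, row $n-1$ is $(0,\dots,0,p,-p)$, and row $n$ is $(0,\dots,0,1)$. Expanding along the last row, then along the resulting last row $(0,\dots,0,p)$, peels off a factor $p$ and leaves an upper-triangular matrix with $1$'s on the diagonal, so $\det U=\pm p$. Finally, for $N=p$ Theorem~A specializes to $L(p)=1$ with trivial Euler factor $1-p^2\chi(p)=1$ (as $\chi(p)=0$), giving $h_1^\infty(p)=p\prod_{\chi\neq\chi_0\text{ even}}\frac14 B_{2,\chi}$, which matches the computed index exactly; hence the sublattice equals $\div\FF_1^\infty(p)$ and the functions generate $\FF_1^\infty(p)$ modulo $\C^\times$. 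I expect the determinant $\det U=\pm p$ to be the essential obstacle, since it is precisely the point where the ad hoc exponents must conspire to reproduce the factor $p^{L(p)}=p$ of Yu's formula; everything else is bookkeeping with Propositions~\ref{proposition: behavior of Eg} and~\ref{proposition: Gamma1 invariant} and Lemmas~\ref{lemma: lattice index} and~\ref{lemma: Bernoulli matrix}.
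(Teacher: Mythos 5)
Your proposal is correct and follows essentially the same route as the paper: membership via Propositions \ref{proposition: Gamma1 invariant} and \ref{proposition: behavior of Eg}, then the lattice-index computation with Lemmas \ref{lemma: lattice index} and \ref{lemma: Bernoulli matrix} matched against Yu's formula. The only (cosmetic) difference is that the paper factors your exponent matrix $U$ as a product $U_2U_1$ with last row $(0,\ldots,0,1-b^2)$ and lets the factor $1-b^2$ cancel, whereas you take the last row to be $(0,\ldots,0,1)$ and evaluate $\det U=\pm p$ directly by cofactor expansion.
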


\begin{proof} Let $p$, $a$, $b$, and $n$ be given as in the
  statement. There are $n$ essentially distinct $E_g$, and there are
  $n$ cusps $k/p$, $k=1,\ldots,n$, of $X_1(p)$ that are lying over
  $\infty$ of $X_0(p)$. By Proposition \ref{proposition: behavior of
  Eg}, the order of $E_g$ at $k/p$ is $pB_2(gk/p)/2$. We form an
  $n\times n$ matrix $M=(M_{ij})$ by setting
  $$
    M_{ij}=\frac p2B_2(a^{i+j-2}/p)
  $$
  to be the order of $E_{a^{i-1}}$ at $a^{j-1}/p$. Set
  \begin{equation} \label{equation: U1}
    U_1=\begin{pmatrix}1&-b^2& 0 & \cdots & \cdots & \cdots\\
      0& 1 & -b^2 & \cdots & \cdots & \cdots\\
      \vdots & \vdots & & & \vdots & \vdots \\
      \cdots & \cdots & \cdots & 0 & 1 & -b^2 \\
      \cdots & \cdots & \cdots & 0 & 0 & 1-b^2
    \end{pmatrix}.
  \end{equation}
  In other words, $U_1$ has $-b^2$ on the superdiagonal and $1$ on the
  diagonal, except for the last one, which has $1-b^2$. Note that for
  $i=1,\ldots,n-1$, the $i$th row of the matrix $U_1M$ now records the
  orders of $E_{a^{i-1}}/E_{a^i}^{b^2}$ at cusps.

  Furthermore, set
  \begin{equation} \label{equation: U2}
    U_2=\begin{pmatrix}1&-1& 0 & \cdots & \cdots & \cdots \\
      0 & 1 & -1 & \cdots & \cdots & \cdots \\
      \vdots & \vdots & & & \vdots & \vdots \\
      \cdots & \cdots & \cdots & 1 & -1 & 0 \\
      \cdots & \cdots & \cdots & 0 & p & -p \\
      \cdots & \cdots & \cdots & 0 & 0 & 1
    \end{pmatrix}
  \end{equation}
  The matrix $U_2$ has $-1$ on the superdiagonal and $1$ on the
  diagonal, except for the $(n-1)$-st row, which has $-p$ and $p$,
  respectively. Then the first $n-2$ rows of the matrix $U_2U_1M$
  describe the orders of
  $f_i=E_{a^{i-1}}E_{a^{i+1}}^{b^2}/E_{a^i}^{1+b^2}$ at cusps. Also,
  the $(n-1)$-st row gives the orders of $f_{n-1}=E_{b^2}^p/E_b^p$ at
  cusps. By Proposition \ref{proposition: Gamma1 invariant}, these
  functions are all modular on $\Gamma_1(p)$. Moreover, by Proposition
  \ref{proposition: behavior of Eg}, $f_i$ has no poles nor zeros at
  cusps that are not of the form $k/p$. Thus, the functions $f_i$
  belong to the group $\FF_1^\infty(p)$. We will show that these
  functions are a basis of $\FF_1^\infty(p)$ modulo $\C^\times$, or
  equivalently, that the divisors of these functions $f_i$ form a
  $\Z$-basis for the additive group $\div\FF^\infty_1(p)$.

  It is obvious that the divisor group $\DD^\infty_1(p)$ is
  generated by $(1/p)-(a/p)$, $(a/p)-(a^2/p)$, $\cdots$,
  $(b^2/p)-(b/p)$. Thus, according to Lemma \ref{lemma: lattice
  index}, the index of the subgroup generated by the
  divisors of $f_i$ in the group $\DD^\infty_1(p)$ is the absolute
  value of $\det(U_2U_1M)$ divided by the sum of the entries in the
  last row of $U_2U_1M$. It remains to show that it has the correct
  value as given in \eqref{equation: Yu's formula}.

  By Lemma \ref{lemma: Bernoulli matrix}, the determinant of $M$ is,
  up to $\pm1$ sign,
  $$
    \det M=\prod_{\chi}\frac14B_{2,\chi}
  $$
  (Note that the matrix $M$ here differs from the one in Lemma
  \ref{lemma: Bernoulli matrix} by multiplication by permutation
  matrices on the two sides.) Also, the determinants of $U_1$ and $U_2$
  are $1-b^2$ and $p$, respectively. Now the last row of $U_2U_1$ is
  $(0,\ldots,0,1-b^2)$. If follows that the sum of the entries in the
  last row of $U_2U_1M$ is equal to
  $$
    (1-b^2)\sum_{k=1}^{(p-1)/2}\frac p2B_2(k/p)
   =\frac{1-b^2}4B_{2,\chi_0}.
  $$
  and the index is equal to the absolute value of
  \begin{equation*}
  \begin{split}
    \frac 4{(1-b^2)B_{2,\chi_0}}\det(U_2U_1M)
   =\frac 4{(1-b^2)B_{2,\chi_0}}\cdot p
    \cdot(1-b^2)\prod_\chi\frac14B_{2,\chi}
   =p\prod_{\chi\neq\chi_0}\frac14B_{2,\chi},
  \end{split}
  \end{equation*}
  which is indeed the index of $\div\FF_1^\infty(p)$ in
  $\DD^\infty_1(p)$. In other words, the functions $f_i$ form a basis
  for $\FF_1^\infty(p)$ modulo $\C^\times$. This completes the proof
  of the theorem.
\end{proof}

We now give an example demonstrating our idea.
\medskip

\noindent{\bf Example.} Let $N=13$. We choose the generator $a$ of the
group $(\Z/13\Z)^\times$ to be $a=7$. Then the multiplicative inverse
of $a$ modulo $13$ is $b=2$. The cusps of $X_1(13)$ lying over
$\infty$ of $X_0(13)$ are $i/13$, $i=1,\ldots,6$. We denote these
cusps by $P_i=a^{i-1}/13$, $i=1,\ldots,6$. Then the matrix $M$ is
$$
  M=(13B_2(7^{i+j-2}/13)/2)_{ij}
   =\frac1{156}\begin{pmatrix}
  97 & -83 & -11 & -71 & -47 & 37 \\
 -83 & -11 & -71 & -47 &  37 & 97 \\
 -11 & -71 & -47 &  37 &  97 & -83 \\
 -71 & -47 &  37 &  97 & -83 & -11 \\
 -47 &  37 &  97 & -83 & -11 & -71 \\
  37 &  97 & -83 & -11 & -71 & -47 \end{pmatrix}.
$$
With $U_1$ and $U_2$ given by \eqref{equation: U1} and
\eqref{equation: U2}, we find
$$
  U_2U_1M=\begin{pmatrix}
  3&-2&1&2&1&-5\\-2&1&2&1&-5&3\\1&2&1&-5&3&-2\\
  2&1&-5&3&-2&1\\-7&-5&15&-6&5&-2\\
  c_1&c_2&c_3&c_4&c_5&c_6\end{pmatrix},
$$
where $(c_1,\ldots,c_6)$ is $-3$ times the last row of $M$.
The first $5$ rows represent the orders of $f_1=E_1E_3^4/E_6^5$,
$f_2=E_6E_5^4/E_3^5$, $f_3=E_3E_4^4/E_5^5$, $f_4=E_5E_2^4/E_4^5$, and
$E_4^{13}/E_2^{13}$ at the cusps $P_j$, respectively. Since the
determinant of $U_2U_1M$ is $57/2$ and the sum of $c_i$ is $3/2$,
according to Lemma \ref{lemma: lattice index}, we find the index of
the subgroup generated by the divisors of $f_i$ in $\DD^\infty_1(13)$
is $19$, which agrees with the divisor class number obtained from
\eqref{equation: Yu's formula}. In other words, $f_i$ generate
$\FF_1^\infty(13)$.

Now observe that the divisor class group
$\DD_1^\infty(13)/\div\FF_1(13)$ is cyclic. Thus, there is an integer
$m$ with $0<m<19$ such that $m(P_1)-m(P_2)$ and $(P_2)-(P_3)$ are in
the same class, i.e., $m(P_1)-(m+1)(P_2)+(P_3)$ is a principal
divisor. This integer $m$ has the property that the equation
$$
  (c_1,c_2,c_3,c_4,c_5,c_6)U_3U_2U_1M
 =(m,-m-1,1,0,0,0)
$$
has an integer solution. We find that this occurs when $m=8$ with
$$
  (c_1,\ldots,c_6)=(9,40,167,675,208,0).
$$
This integer $m$ also satisfies $m(P_2)-m(P_3)\sim(P_3)-(P_4)$,
$m(P_3)-m(P_4)\sim(P_4)-(P_5)$, and so on. This is because if
$f(\tau)$ is a modular function on $\Gamma_1(13)$ such that
$$
  \div f=m(P_1)-(m+1)(P_2)+(P_3),
$$
then
\begin{equation*}
\begin{split}
  \div f\big|\begin{pmatrix}6&-2\\13&-1\end{pmatrix}
&=m(P_2)-(m+1)(P_3)+(P_4), \\
  \div f\big|\begin{pmatrix}6&-2\\13&-1\end{pmatrix}^2
&=m(P_3)-(m+1)(P_4)+(P_5),
\end{split}
\end{equation*}
and so on. From these informations, we see that the divisor
$$
  \sum_{i=1}^6d_i(P_i)
$$ 
and 
\begin{equation*}
\begin{split}
&\{d_1+8(d_1+d_2)+8^2(d_1+d_2+d_3)+\cdots+8^4(d_1+d_2+d_3+d_4+d_5)\} \\
&\qquad\cdot  ((P_1)-(P_2))+(d_1+d_2+\cdots+d_6)(P_6)
\end{split}
\end{equation*}
are in the same divisor class. In particular, it is principal if and
only if
$$
  d_1+d_2+\cdots+d_6=0, \qquad
  7d_1+6d_2+17d_3+10d_4+11d_5\equiv 0\mod 19.
$$
\end{subsection}

\begin{subsection}{Prime power cases} \label{subsection: prime powers}
In this section we deal with the cases where $N=p^k$ is a prime power.
For the ease of exposition, odd prime power cases and even prime power
cases are stated as two theorems, even though the proofs are very
similar.

We first describe two constructions of modular functions
belonging to $\FF_1^\infty(p^k)$.

\begin{Lemma} \label{lemma: method 1} Let $p$ be a prime, and $k$ be
  an integer greater than $1$. Suppose that $g$ and $e_g$ are integers
  satisfying $p\nmid g$ and
  $$
    \sum_g g^2e_g\equiv 0\mod p
  $$
  Then
  $$
    \prod_g\left(\frac{E_g}{E_{g(1+mp^{k-1})}}\right)^{e_g}
  $$
  is a modular function in $\FF_1^\infty(p^k)$ for all integers $m$.
\end{Lemma}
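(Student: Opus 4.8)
The plan is to verify two things about $f(\tau)=\prod_g\bigl(E_g/E_{g(1+mp^{k-1})}\bigr)^{e_g}$: that it is modular on $\Gamma_1(p^k)$, and that its divisor is supported on the $\infty$-cusps. Since each $E_h$ is a modular unit, whose only zeros and poles lie at cusps, $f$ is automatically a modular unit, so these two facts together place $f$ in $\FF_1^\infty(p^k)$. Throughout I would write $f=\prod_h E_h^{f_h}$, where the exponent of $E_h$ collects a contribution $+e_g$ from each $g$ with $h=g$ and a contribution $-e_g$ from each $g$ with $h=g(1+mp^{k-1})$; note that both $g$ and $g(1+mp^{k-1})$ are prime to $p$, because $1+mp^{k-1}\equiv1\pmod p$ is a unit modulo $p^k$, so each index satisfies the hypothesis $p\nmid\cdot$ (in particular is not divisible by $N$).

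For modularity I would apply Proposition \ref{proposition: Gamma1 invariant} to the literal integer indices $g$ and $g(1+mp^{k-1})$. The total degree $\sum_h f_h=\sum_g e_g-\sum_g e_g=0$ is divisible by $12$ with no hypothesis needed. For the quadratic sum, a direct expansion gives
$$
  \sum_h h^2 f_h=\sum_g g^2 e_g-\sum_g g^2(1+mp^{k-1})^2 e_g
  =-mp^{k-1}(2+mp^{k-1})\sum_g g^2 e_g.
$$
When $p$ is odd I would use the reduced criterion of Proposition \ref{proposition: Gamma1 invariant}: the hypothesis $p\mid\sum_g g^2 e_g$ makes the right-hand side divisible by $p^{k-1}\cdot p=p^k=N$, as required. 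When $p=2$ I must instead check $\sum_h h^2 f_h\equiv0\pmod{2N}$ together with $\sum_h h f_h\equiv0\pmod 2$; here $p^{k-1}=2^{k-1}$ is even since $k\ge2$, so the factor $2+mp^{k-1}$ is even, and combined with the hypothesis $2\mid\sum_g g^2 e_g$ this yields divisibility by $2^{k-1}\cdot2\cdot2=2^{k+1}=2N$, while $\sum_h h f_h=-mp^{k-1}\sum_g g e_g$ is visibly even.

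For the support of the divisor I would invoke Lemma \ref{lemma: orbit condition}, whose only prime to consider is $p$ itself. The essential observation is that
$$
  g(1+mp^{k-1})-g=gmp^{k-1}\equiv0\pmod{p^{k-1}},
$$
and since $N/p=p^{k-1}$, the indices $g$ and $g(1+mp^{k-1})$ lie in the \emph{same} orbit $\O_{g,p}$. Consequently, for each $a$,
$$
  \sum_{h\in\O_{a,p}}f_h
  =\sum_{g\in\O_{a,p}}e_g-\sum_{g:\,g(1+mp^{k-1})\in\O_{a,p}}e_g
  =\sum_{g\in\O_{a,p}}e_g-\sum_{g\in\O_{a,p}}e_g=0,
$$
the last equality using that $g\in\O_{a,p}$ if and only if $g(1+mp^{k-1})\in\O_{a,p}$. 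Hence the orbit condition of Lemma \ref{lemma: orbit condition} holds, so the zeros and poles of $f$ occur only at $\infty$-cusps; since $f$ is modular its orders there are integers, and therefore $f\in\FF_1^\infty(p^k)$.

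The routine-but-delicate part will be the $p=2$ bookkeeping for the quadratic congruence modulo $2N$, where the single hypothesis $2\mid\sum_g g^2 e_g$ must combine with the evenness of $p^{k-1}$ to reach the full modulus $2^{k+1}$. The conceptual heart, by contrast, is the one-line orbit observation that $g\equiv g(1+mp^{k-1})\pmod{N/p}$, which makes the orbit sums cancel in pairs and immediately delivers the support condition.
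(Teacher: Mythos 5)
Your proposal is correct and follows exactly the paper's route: the paper's own proof consists of the two assertions that Proposition \ref{proposition: Gamma1 invariant} gives modularity on $\Gamma_1(p^k)$ and that the orbit condition of Lemma \ref{lemma: orbit condition} holds, and you have simply supplied the (correct) detailed verifications of both, including the $p=2$ case and the observation that $g\equiv g(1+mp^{k-1})\pmod{p^{k-1}}$ forces the orbit sums to cancel.
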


\begin{proof} It is easy to see from Proposition \ref{proposition:
  Gamma1 invariant} that the functions concerned are all modular on
  $\Gamma_1(p^k)$. These functions also satisfy the orbit condition in
  Lemma \ref{lemma: orbit condition}. Thus, they are contained in
  $\FF_1^\infty(p^k)$.
%
\end{proof}

The second method uses functions from lower levels.

\begin{Lemma} \label{lemma: level lowering} Assume that $N=pM$.
  If $e_g$ are integers such that
  $$
    \sum_g e_g\equiv 0\mod 12, \qquad
    \sum_g ge_g\equiv 0\mod 2,
  $$
  and
  $$
    \sum_g g^2e_g\equiv 0\mod
    \begin{cases}2M, &\text{if }p\nmid M, \\
    2M/p, &\text{if }p|M, \end{cases}
  $$
  then $\prod_g E_g^{(M)}(p\tau)^{e_g}$ is a modular function on
  $\Gamma_1(N)$. Moreover, if $M$ is odd, then the conditions can
  be relaxed to
  $$
    \sum_g e_g\equiv 0\mod 12, \qquad
    \sum_g g^2e_g\equiv 0\mod
    \begin{cases}M, &\text{if }p\nmid M, \\
    M/p, &\text{if }p|M. \end{cases}
  $$
\end{Lemma}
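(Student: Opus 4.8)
The plan is to collapse the entire statement into a single application of Proposition~\ref{proposition: Gamma1 invariant} at level $N$. The crucial point is that the functions $E_g^{(M)}(p\tau)$ are not genuinely objects of level $M$ but are already building blocks of level $N$: taking $d=p$ in the trivial relation $E_a^{(N)}(\tau)=E_{a/d}^{(N/d)}(d\tau)$, with $a=pg$ (so that $p\mid (pg,N)$, since $N=pM$), gives
$$
  E_g^{(M)}(p\tau)=E_{pg}^{(pM)}(\tau)=E_{pg}^{(N)}(\tau).
$$
Hence $\prod_g E_g^{(M)}(p\tau)^{e_g}=\prod_g E_{pg}^{(N)}(\tau)^{e_g}$, which is a product of the level-$N$ functions $E_h^{(N)}$ supported on the residues $h=pg$ (each divisible by $p$, and none $\equiv0\bmod N$ because $M\nmid g$), with the exponents of the remaining $E_h^{(N)}$ set to $0$. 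First I would make this substitution and thereby reduce the lemma to checking the hypotheses of Proposition~\ref{proposition: Gamma1 invariant} for this level-$N$ product.

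Then I would write out the three congruences that Proposition~\ref{proposition: Gamma1 invariant} demands, now with the index replaced by $pg$:
$$
  \sum_g e_g\equiv 0\mod 12,\qquad
  p\sum_g ge_g\equiv 0\mod 2,\qquad
  p^2\sum_g g^2e_g\equiv 0\mod 2N.
$$
The first is literally the first hypothesis of the lemma. The second follows from $\sum_g ge_g\equiv 0\bmod 2$ (and is automatic when $p=2$). The third, after dividing by $p$ and using $2N=2pM$, is equivalent to $2M\mid p\sum_g g^2e_g$, and this is where the two cases of the lemma come in. If $p\nmid M$, the hypothesis $\sum_g g^2e_g\equiv 0\bmod 2M$ already gives $2M\mid\sum_g g^2e_g$, hence $2M\mid p\sum_g g^2e_g$; if $p\mid M$, the hypothesis $\sum_g g^2e_g\equiv 0\bmod 2M/p$ gives $p\sum_g g^2e_g\equiv 0\bmod 2M$ at once. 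In either case Proposition~\ref{proposition: Gamma1 invariant} applies, proving the first assertion.

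For the relaxed conditions when $M$ is odd I would again feed the substituted sums into the proposition, but now split according to the parity of $p$. When $p$ is odd, $N=pM$ is odd, so I use the reduced form of Proposition~\ref{proposition: Gamma1 invariant}: only $\sum_g e_g\equiv0\bmod 12$ and $p^2\sum_g g^2e_g\equiv0\bmod N$ are required, the latter reducing to $M\mid p\sum_g g^2e_g$, which follows from $\sum_g g^2e_g\equiv0\bmod M$ (resp. $\bmod M/p$) exactly as above. When $p=2$ and $M$ is odd, $N=2M$ is even, so the reduced form is unavailable and I must use the full form; here the substituted index $2g$ injects factors of $2$ and $4$, so the second congruence becomes automatic and the third, $4\sum_g g^2e_g\equiv0\bmod 2N=4M$, collapses to $M\mid\sum_g g^2e_g$, which is precisely the relaxed hypothesis in the case $p\nmid M$. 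This last paragraph is the only place where the parity of $p$ genuinely matters, and keeping the factor $p$ (or $2$) correctly synchronized with the modulus in the $p\mid M$ versus $p\nmid M$ dichotomy is the main bookkeeping obstacle; the argument is otherwise entirely formal once the identity $E_g^{(M)}(p\tau)=E_{pg}^{(N)}(\tau)$ is in place.
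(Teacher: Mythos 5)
Your proof is correct, but it takes a different route from the paper's. The paper proves the lemma from scratch out of the transformation law: it conjugates $\sigma=\left(\begin{smallmatrix}a&b\\c&d\end{smallmatrix}\right)\in\Gamma_1(N)$ into a matrix acting on $p\tau$ via $p\sigma\tau=\left(\begin{smallmatrix}a&bp\\c/p&d\end{smallmatrix}\right)(p\tau)$, applies formula \eqref{Eg formula} of Proposition~\ref{proposition: Eg formulas} to each $E_g^{(M)}$, and checks directly that the accumulated exponential factors $\exp\{\pi iabp\sum g^2e_g/M\}$ and the signs from \eqref{shifting for Eg} cancel under the stated congruences; the odd-$M$ relaxation is then obtained by replacing $g$ with $M-g$ to make all indices even. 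You instead observe that $E_g^{(M)}(p\tau)=E_{pg}^{(N)}(\tau)$ by the trivial relation $E_a^{(N)}(\tau)=E_{a/d}^{(N/d)}(d\tau)$ with $d=p$, $a=pg$ (legitimate since $p\mid(pg,N)$ and $pg\not\equiv0\bmod N$), and then feed the reindexed product directly into Proposition~\ref{proposition: Gamma1 invariant} at level $N$; the three congruences of that proposition, with $g$ replaced by $pg$, are exactly $\sum e_g\equiv0\bmod{12}$, $p\sum ge_g\equiv0\bmod 2$, and $p^2\sum g^2e_g\equiv0\bmod{2pM}$, and your case analysis ($p\mid M$ versus $p\nmid M$, and for the relaxed version $p$ odd versus $p=2$ with $M$ odd, invoking the reduced form of the proposition when $N$ is odd) correctly matches these to the lemma's hypotheses. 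Your argument is shorter and reuses machinery already established, at the cost of being a strict application of a sufficient condition; the paper's computation is self-contained and, by exhibiting the permutation $E_g^{(M)}\mapsto E_{ag}^{(M)}$ explicitly, keeps visible the mechanism that is reused elsewhere (e.g.\ in Lemma~\ref{lemma: level lowering 2}). Both are valid proofs of the stated lemma.
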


\begin{proof} Let
  $\sigma=\left(\begin{smallmatrix}a&b\\c&d\end{smallmatrix}\right)
   \in\Gamma_1(N)$.
  We have
  \begin{equation} \label{equation: p sigma}
    p\sigma\tau
   =p\frac{a\tau+b}{c\tau+d}=\frac{a(p\tau)+bp}{(c/p)(p\tau)+d}
   =\begin{pmatrix}a&bp\\ c/p&d\end{pmatrix}(p\tau).
  \end{equation}
  Then by \eqref{Eg formula} of Proposition \ref{proposition: Eg
  formulas},
  $$
    E_g^{(M)}(p\sigma\tau)=\epsilon(a,bpM,c/pM,d)
    e^{\pi i(g^2abp/M-gbp)}E_{ag}^{(M)}(p\tau).
  $$
  Then the assumptions $\sum_g e_g\equiv 0\mod 12$ and $\sum_g
  ge_g\equiv 0\mod 2$ imply that
  $$
    \prod E_g^{(M)}(p\sigma\tau)^{e_g}=\exp\left\{\pi iabp
    \sum g^2e_g/M\right\}\prod E^{(M)}_{ag}(p\tau)^{e_g}.
  $$
  When $p\nmid M$, the condition $\sum_g g^2e_g\equiv 0\mod 2M$ ensures
  that the exponential factor is equal to $1$. When $p|M$, the
  condition $\sum_g g^2e_g\equiv 0\mod 2M/p$ will suffice. In either
  case, we have
  $$
    \prod E_g^{(M)}(p\sigma\tau)^{e_g}=
    \prod E_{ag}^{(M)}(p\tau)^{e_g}.
  $$
  Finally, equality \eqref{shifting for Eg} in Proposition
  \ref{proposition: Eg formulas} and the assumption $\sum ge_g\equiv
  0\mod 2$ show that
  $$
    \prod E_g^{(M)}(p\sigma\tau)^{e_g}=
    \prod(-1)^{ge_g(a-1)/N}E_g^{(M)}(p\tau)^{e_g}=
    \prod E_g^{(M)}(p\tau)^{e_g}.
  $$

  When $M$ is odd, since $E_g^{(M)}=E_{M-g}^{(M)}$, we may assume that
  all $g$ are even so that $\sum ge_g\equiv 0,~\sum g^2e_g\equiv 0\mod
  2$ are always satisfied. Also, $g^2\equiv (M-g)^2\mod M$. Therefore,
  the conditions can be reduced to $\sum e_g\equiv 0\mod 12$ and $\sum
  g^2e_g\equiv 0\mod M$ when $M$ is odd. This completes the proof.
\end{proof}

\begin{Lemma} \label{lemma: level lowering 2} Let $p$ a prime. Assume
  that $N=pM$. If $f(\tau)$ is a modular function on $X_1(M)$, then
  $f(p\tau)$ is a modular function on $X_1(N)$. Furthermore, if $p$
  also divides $M$ and $f(\tau)$ belongs to the group
  $\FF_1^\infty(M)$, then the function $f(p\tau)$ belongs to the group
  $\FF_1^\infty(N)$.
\end{Lemma}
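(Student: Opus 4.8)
The plan is to treat the two assertions separately, both by reducing to the matrix conjugation already recorded in the preceding lemma. For the first assertion I would take an arbitrary $\sigma=\left(\begin{smallmatrix}a&b\\c&d\end{smallmatrix}\right)\in\Gamma_1(N)$ and invoke the identity \eqref{equation: p sigma}, which rewrites $p\sigma\tau=\sigma'(p\tau)$ with $\sigma'=\left(\begin{smallmatrix}a&bp\\c/p&d\end{smallmatrix}\right)$. The key point is that $\sigma'\in\Gamma_1(M)$: since $N=pM$ divides $c$, the entry $c/p$ is an integer divisible by $M$; the determinant is unchanged and still equals $1$; and $a\equiv d\equiv1\pmod N$ forces $a\equiv d\equiv1\pmod M$. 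Hence $f(p\sigma\tau)=f(\sigma'(p\tau))=f(p\tau)$, so $f(p\tau)$ is $\Gamma_1(N)$-invariant. Meromorphy on $\H$ is immediate, and meromorphy at the cusps follows because $\tau\mapsto p\tau$ carries cusps to cusps and introduces no essential singularity in the local parameter; thus $f(p\tau)$ is a modular function on $X_1(N)$. Note that this first part uses nothing about $p\mid M$.

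For the second assertion I would first observe that $f(p\tau)$ is again a modular unit: a zero or pole of $f(p\tau)$ at some $\tau_0$ forces $f$ to have a zero or pole at $p\tau_0$, and since $f$ is a unit this cannot occur for $\tau_0\in\H$ (as then $p\tau_0\in\H$), so the zeros and poles of $f(p\tau)$ lie only at cusps. It then remains to pin down the support of the divisor. I would record the standard fact that a cusp of $X_1(N)$ in lowest terms $a/c$ lies over $\infty$ of $X_0(N)$ exactly when $N\mid c$, and likewise that $a'/c'$ lies in $C_1^\infty(M)$ exactly when its reduced denominator is divisible by $M$. Because $f(p\tau)$ has a zero or pole at $a/c$ only if $f$ does at the reduced form of $pa/c$, everything comes down to computing that reduced denominator.

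The crux — and the step where the hypothesis $p\mid M$ is genuinely used — is the case analysis for the reduced denominator of $pa/c$. Since $(a,c)=1$ gives $\gcd(pa,c)=\gcd(p,c)$, the reduced denominator is $c$ when $p\nmid c$ and $c/p$ when $p\mid c$. Thus a zero or pole of $f(p\tau)$ at $a/c$ requires either $p\nmid c$ with $M\mid c$, or $p\mid c$ with $M\mid(c/p)$. Here $p\mid M$ intervenes decisively: the first alternative would give $p\mid M\mid c$, contradicting $p\nmid c$, so it is vacuous, while the second alternative yields $pM\mid c$, i.e. $N\mid c$. Consequently every zero and pole of $f(p\tau)$ sits at a cusp $a/c$ with $N\mid c$, that is, in $C_1^\infty(N)$, whence $f(p\tau)\in\FF_1^\infty(N)$. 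I expect this divisibility bookkeeping, not the modularity computation, to be the only delicate point: without $p\mid M$ the first alternative survives and the support can escape $C_1^\infty(N)$, which is exactly why the extra hypothesis is imposed.
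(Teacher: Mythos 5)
Your proof is correct and follows essentially the same route as the paper: the first assertion via the conjugation $p\sigma\tau=\left(\begin{smallmatrix}a&bp\\c/p&d\end{smallmatrix}\right)(p\tau)$, and the second via the case analysis $p\mid c$ versus $p\nmid c$ on the denominator of a cusp, with $p\mid M$ used in exactly the same place (to rule out the $p\nmid c$ alternative). The only differences are cosmetic: you normalize cusps in lowest terms and argue contrapositively, whereas the paper normalizes denominators to divisors of $N$ and shows directly that no pole or zero occurs at cusps not over $\infty$.
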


\begin{proof} Assume that $f(\tau)$ is modular on $\Gamma_1(M)$. Given
  $\sigma=\left(\begin{smallmatrix} a&b\\
  c&d\end{smallmatrix}\right)\in\Gamma_1(N)$, we have, by
  \eqref{equation: p sigma},
  $$
    p\sigma\tau=\begin{pmatrix}a&bp\\ c/p&d\end{pmatrix}(p\tau).
  $$
  Since $f(\tau)$ is assumed to be modular on $\Gamma_1(M)$, we have
  $f(p\sigma\tau)=f(p\tau)$. That is, $f(p\tau)$ is modular on
  $\Gamma_1(N)$.

  Now assume that $p|M$ and $f(\tau)\in\FF_1^\infty(M)$. The
  assumption that $f(\tau)$ has no zeros nor poles in $\H$ implies
  that $f(p\tau)$ has the same property. Now we check that the poles
  and zeros of $f(p\tau)$ occurs only at cusps lying over $\infty$ of
  $X_0(N)$.

  In general, the cusps of $X_1(N)$ takes the form $a/c$ with $c|N$
  and $(a,c)=1$. Choose integers $b$ and $d$ such that
  $$
    \sigma=\begin{pmatrix}a&b\\c&d\end{pmatrix}\in SL(2,\Z).
  $$
  Then the order of a modular function $g(\tau)$ on $X_1(N)$ at $a/c$ is
  determined by the Fourier expansion of $g(\sigma\tau)$. In
  particular, $g(\tau)$ has no pole nor zero at $a/c$ if and only if
  the Fourier expansion of $g(\sigma\tau)$ starts from a non-vanishing
  constant term.

  Now $a/c$ does not lie over $\infty$ of $X_0(N)$ if and only $c$ is
  a proper divisor of $N$. We will show that $f(p\tau)$ has no
  poles nor zeros at such points. This amounts to proving the
  assertion that $\lim_{\tau\to\infty}f(p\sigma\tau)$ is finite and
  non-vanishing for such $a/c$. We consider two cases $p|c$ and $p\nmid
  c$ separately.

  When $p|c$, we have $\lim_{\tau\to\infty}f(p\sigma\tau)=f(a/(c/p))$.
  Since the denominator $c/p$ is a proper divisor of $M$, by
  assumption that $f(\tau)\in\FF_1^\infty(M)$,
  $\lim_{\tau\to\infty}f(p\sigma\tau)$ is finite and non-vanishing.

  When $p\nmid c$, we have $\lim_{\tau\to\infty}p\sigma\tau=pa/c$. By the
  assumption that $p|M$, the denominator $c$ is a proper
  divisor of $M$. Thus, we conclude again that $f(p\tau)$ has no poles
  nor zeros at $a/c$. This completes the proof.
\end{proof}

Combining the above two lemmas, we obtain a simple construction of
modular functions that are in $\FF_1^\infty(p^k)$.

\begin{Corollary} \label{corollary: method 2} Let $p$ be a prime and
  $k\ge 2$ be a positive integer. Then 
  $$
    E^{(p^{\ell})}_g(p^{k-\ell}\tau)/
    E^{(p^{\ell})}_{g+mp^{\ell-1}}(p^{k-\ell}\tau)
  $$
  are all modular functions contained in $\FF_1^\infty(p^k)$ for all
  $\ell=1,\ldots,k-1$ and all $g$ and $m$ satisfying
  $g,~g+mp^{\ell-1}\not\equiv 0\mod p^{\ell}$.
\end{Corollary}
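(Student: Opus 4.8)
The plan is to take $\phi(\tau)=E^{(p^\ell)}_g(\tau)/E^{(p^\ell)}_{g+mp^{\ell-1}}(\tau)$ as the basic building block, to first produce one genuine modular unit in $\FF_1^\infty(p^{\ell+1})$ from it by the substitution $\tau\mapsto p\tau$, and then to climb to level $p^k$ by iterating Lemma \ref{lemma: level lowering 2}. Thus the argument has a single hard step (passing from the possibly non-modular $\phi$ to the honestly modular $\phi(p\tau)$) followed by a routine induction. Note that $\phi$ itself need not be modular on $\Gamma_1(p^\ell)$, so one cannot feed $\phi$ directly into Lemma \ref{lemma: level lowering 2}; the substitution is what rescues modularity.

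First I would apply Lemma \ref{lemma: level lowering} with $M=p^\ell$ and exponents $e_g=1$, $e_{g+mp^{\ell-1}}=-1$, all others zero, to show $\phi(p\tau)=\prod_g E^{(p^\ell)}_g(p\tau)^{e_g}$ is modular on $\Gamma_1(p^{\ell+1})$. The three hypotheses are checked directly: $\sum_g e_g=0$ kills the $\bmod 12$ condition, while $\sum_g g e_g=-mp^{\ell-1}$ and $\sum_g g^2e_g=-2gmp^{\ell-1}-m^2p^{2\ell-2}$ are fed into the $\bmod 2$ and $\bmod\, 2M/p=2p^{\ell-1}$ conditions (we are in the case $p\mid M$). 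The point is that the relaxed modulus $2p^{\ell-1}$ supplied by the $\tau\mapsto p\tau$ substitution, rather than the modulus $\sim p^\ell$ one would face making $\phi$ modular on $\Gamma_1(p^\ell)$, is exactly what these quantities satisfy: both $-2gmp^{\ell-1}$ and $m^2p^{2\ell-2}$ are divisible by $p^{\ell-1}$ since $2\ell-2\ge\ell-1$, and the extra factor of $2$ is supplied either by the even shift (when $p=2$, $\ell\ge2$) or by passing to even index representatives via $E^{(M)}_g=E^{(M)}_{M-g}$ (the odd-$M$ relaxation, when $p$ is odd). Here the hypothesis $g,\,g+mp^{\ell-1}\not\equiv0\pmod{p^\ell}$ does real work: in the boundary case $p=2$, $\ell=1$ it forces $m$ even, which is precisely what the $\bmod 2$ conditions require, and in general it guarantees the two indices are genuine nonzero arguments of $E^{(p^\ell)}$.

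Next I would verify that this modular function lies in $\FF_1^\infty(p^{\ell+1})$. Rewriting by the trivial relation $E^{(p^\ell)}_a(p\tau)=E^{(p^{\ell+1})}_{ap}(\tau)$, we have $\phi(p\tau)=E^{(p^{\ell+1})}_{gp}/E^{(p^{\ell+1})}_{gp+mp^{\ell}}$, whose only nonzero exponents are $+1$ at $gp$ and $-1$ at $gp+mp^\ell$. Since $gp+mp^\ell\equiv gp\pmod{p^\ell}$, the two indices lie in the single orbit $\O_{gp,p}$ at level $p^{\ell+1}$, on which the exponents sum to $0$, while every other orbit carries only zero exponents. Hence the orbit condition of Lemma \ref{lemma: orbit condition} holds, and because $\phi(p\tau)$ is genuinely modular its orders at cusps are integers, so $\phi(p\tau)\in\FF_1^\infty(p^{\ell+1})$.

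Finally I would climb from level $p^{\ell+1}$ to level $p^k$. Since $p\mid p^{\ell+1}$ and $\phi(p\tau)\in\FF_1^\infty(p^{\ell+1})$, Lemma \ref{lemma: level lowering 2} gives $\phi(p^2\tau)\in\FF_1^\infty(p^{\ell+2})$; at each subsequent level $p^j$ with $j>\ell$ the divisibility $p\mid p^j$ again holds, so $k-\ell-1$ applications of Lemma \ref{lemma: level lowering 2} produce $\phi(p^{k-\ell}\tau)\in\FF_1^\infty(p^k)$, which is exactly the asserted function (the case $\ell=k-1$ being already settled by the first step). The main obstacle is the bookkeeping in that first step: one must confirm that the $\tau\mapsto p\tau$ substitution relaxes the $\sum_g g^2e_g$ congruence by precisely one power of $p$, and that the leftover factor of $2$ is covered by the parity/odd-$M$ dichotomy, with the hypothesis on $g$ and $g+mp^{\ell-1}$ modulo $p^\ell$ supplying any missing parity; the remainder is a transparent induction. (For $p=2$ one keeps in mind that $N=2^k>4$ forces $k\ge3$, so the degenerate level $N=4$ never occurs as the final target.)
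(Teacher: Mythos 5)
Your proof is correct, and its skeleton---Lemma \ref{lemma: level lowering} to make $\phi(p\tau)$ modular on $\Gamma_1(p^{\ell+1})$, then iterated applications of Lemma \ref{lemma: level lowering 2} to climb to level $p^k$---coincides with the paper's. Where you genuinely diverge is in how the divisor is shown to be supported on the $\infty$-cusps. You establish full membership $\phi(p\tau)\in\FF_1^\infty(p^{\ell+1})$ already at the first level by rewriting $\phi(p\tau)=E^{(p^{\ell+1})}_{gp}/E^{(p^{\ell+1})}_{gp+mp^{\ell}}$ and checking the orbit condition of Lemma \ref{lemma: orbit condition} directly (both indices lie in the single orbit $\O_{gp,p}$, on which the exponents sum to zero), with modularity supplying integrality of the orders; after that only the second part of Lemma \ref{lemma: level lowering 2} is needed. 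The paper never checks the orbit condition at level $p^{\ell+1}$: it instead passes to the $p$-th power $\phi(\tau)^p=E^{(p^{\ell})}_g(\tau)^p/E^{(p^{\ell})}_{g+mp^{\ell-1}}(\tau)^p$, which lies in $\FF_1^\infty(p^{\ell})$ by Lemma \ref{lemma: method 1}, pushes that up through Lemma \ref{lemma: level lowering 2}, and then deduces the support of $\div\phi(p^{k-\ell}\tau)$ from that of its $p$-th power. Your route is somewhat more self-contained---it dispenses with Lemma \ref{lemma: method 1} entirely, and so sidesteps the question of whether the additive shift $g\mapsto g+mp^{\ell-1}$ can always be put in the multiplicative form $g\mapsto g(1+m'p^{\ell-1})$ that lemma requires---while the paper's route avoids invoking the orbit condition for prime-power levels, where it only controls the divisor and not integrality. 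Your explicit verification of the congruences in Lemma \ref{lemma: level lowering}, including the parity bookkeeping at $p=2$ and the use of the hypothesis $g,\,g+mp^{\ell-1}\not\equiv 0\pmod{p^{\ell}}$ to force $m$ even when $p=2$, $\ell=1$, is more detailed than the paper's one-line citation and is correct.
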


\begin{proof} First of all, Lemma \ref{lemma: level lowering} shows
  that
  $f(\tau)=E^{(p^\ell)}_g(p\tau)/E^{(p^\ell)}_{g+mp^{\ell-1}}(p\tau)$
  is a modular function on $\Gamma_1(p^{\ell+1})$. Then the first part
  of Lemma \ref{lemma: level lowering 2} implies that
  $f(p^{k-\ell-1}\tau)$ is modular on $\Gamma_1(p^k)$ for all
  $k>\ell$. We now prove that it has poles and zeros only at cusps in
  $C_1^\infty(p^k)$.

  Lemma \ref{lemma: method 1} shows that
  $f(\tau/p)^p=E^{(p^\ell)}_g(\tau)^p/E^{(p^\ell)}_{g+mp^{\ell-1}}(\tau)^p$
  is in $\FF_1^\infty(p^\ell)$. Then Lemma \ref{lemma: level lowering 2}
  implies that $f(p^{k-\ell-1}\tau)^p$ has poles and zeros only at
  cusps in $C_1^\infty(p^k)$, and so is $f(p^{k-\ell-1}\tau)$. We
  conclude that $f(p^{k-\ell-1}\tau)$ is in $\FF_1^\infty(p^k)$, as
  claimed in the statement of the lemma.
\end{proof}

With the above lemmas we can now determine a basis for
$\FF_1^\infty(p^k)$ for odd primes $p$ and integers $k\ge 2$.

\begin{Theorem} \label{theorem: prime power} Let $k>1$ and $N=p^k$ be
  an odd prime power. For a positive integer $\ell$, we set
  $\phi_\ell=\phi(p^\ell)/2$. Let $a$ be a generator of the cyclic
  group $(\Z/p^k\Z)^\times/\pm 1$ and $b$ be its multiplicative
  inverse modulo $p$. Then a basis for $\FF_1^\infty(p^k)$ modulo
  $\C^\times$ is given by
$$
  \begin{cases}
  f_i=\frac{E_{a^{i-1}}E_{a^{i+\phi_{k-1}}}^{b^2}}
  {E_{a^{i+\phi_{k-1}-1}}E_{a^i}^{b^2}},
  &i=1,\ldots,\phi_k-\phi_{k-1}-1, \\
  f_i=\frac{E_{a^{i-1}}^p}{E_{a^{i+\phi_{k-1}-1}}^p},
  &i=\phi_k-\phi_{k-1}, \\
  f_i=\frac{E^{(p^{k-1})}_{a^{i-1}}(p\tau)}
  {E^{(p^{k-1})}_{a^{i+\phi_{k-2}-1}}(p\tau)},
  &i=\phi_k-\phi_{k-1}+1,\ldots,\phi_k-\phi_{k-2}, \\
  \qquad \vdots & \qquad \vdots \\
  f_i=\frac{E^{(p^2)}_{a^{i-1}}(p^{k-2}\tau)}
    {E^{(p^2)}_{a^{i+\phi_1-1}}(p^{k-2}\tau)},
  &i=\phi_k-\phi_2+1,\ldots,\phi_k-\phi_1, \\
  f_i=\frac{E^{(p)}_{a^{i-1}}(p^{k-1}\tau)}
  {E^{(p)}_{a^i}(p^{k-1}\tau)}, &i=\phi_k-\phi_1+1,\ldots,\phi_k-1.
  \end{cases}
$$
\end{Theorem}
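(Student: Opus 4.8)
The plan is to imitate the proof of Theorem~\ref{theorem: prime}. First I would verify that every listed $f_i$ belongs to $\FF_1^\infty(p^k)$, and then show that the $\phi_k-1$ divisors $\div f_i$ generate a sublattice of $\DD_1^\infty(p^k)$ of index exactly $h_1^\infty(p^k)$. Since $f\mapsto\div f$ is injective modulo $\C^\times$ and $h_1^\infty(p^k)=|\DD_1^\infty(p^k)/\div\FF_1^\infty(p^k)|$, an index equal to $h_1^\infty(p^k)$ forces $\langle\div f_i\rangle=\div\FF_1^\infty(p^k)$, whence the $f_i$ form a basis modulo $\C^\times$.

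The membership step rests on one elementary remark: since $a$ reduces to a generator of $(\Z/p^\ell\Z)^\times/\pm1$ of order $\phi_\ell=p\,\phi_{\ell-1}$, the power $a^{\phi_{\ell-1}}$ has order $p$, and therefore $a^{\phi_{\ell-1}}\equiv\pm(1+mp^{\ell-1})\pmod{p^\ell}$ for some $m$ with $p\nmid m$. For the level-$k$ functions (the first two lines), writing $g=a^{i-1}$ turns each $f_i$ into a product of factors $E_g/E_{g(1+mp^{k-1})}$; because $b\equiv a^{-1}\pmod p$ gives $a^2b^2\equiv1\pmod p$, the resulting exponents satisfy $\sum_g g^2e_g\equiv0\pmod p$, so Lemma~\ref{lemma: method 1} applies and $f_i\in\FF_1^\infty(p^k)$. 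For $2\le\ell\le k-1$ the same remark identifies $f_i$ with a quotient $E_h^{(p^\ell)}(p^{k-\ell}\tau)/E_{h+m'p^{\ell-1}}^{(p^\ell)}(p^{k-\ell}\tau)$ with $h=a^{i-1}$, which lies in $\FF_1^\infty(p^k)$ by Corollary~\ref{corollary: method 2}; the last block $\ell=1$ is the case $\ell=1$ of the same corollary.

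For the index I would reuse the Bernoulli matrix $M$ whose $(i,j)$-entry is $\frac{p^k}2B_2(a^{i+j-2}/p^k)$, the order of $E_{a^{i-1}}$ at the $\infty$-cusp $a^{j-1}/p^k$; by Lemma~\ref{lemma: Bernoulli matrix} (up to permutation and sign) $\det M=\pm\prod_{\chi\text{ even}}\frac14B_{2,\chi}$. The point is that each $\div f_i$ is an integer combination of the rows of $M$, recorded by a row of an integer matrix $U$. For the level-$k$ functions this is immediate. For $\ell<k$ I would combine the identity $\operatorname{ord}_{a^{j-1}/p^k}E_h^{(p^\ell)}(p^{k-\ell}\tau)=p^{k-\ell}\operatorname{ord}_{a^{j-1}/p^k}E_h^{(p^\ell)}(\tau)$ with the distribution relation of Lemma~\ref{lemma: Bernoulli relation} in the form
$$
  E_h^{(p^\ell)}(\tau)=\prod_{s=0}^{p^{k-\ell}-1}E_{h+sp^\ell}^{(p^k)}(\tau),
$$
so that the row of $U$ attached to a level-$\ell$ function is $p^{k-\ell}$ times a signed sum of $p^{k-\ell}$ standard basis vectors (all the indices $h+sp^\ell$ being units). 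Taking the auxiliary last row of the matrix in Lemma~\ref{lemma: lattice index} to be a single row of $M$, whose entries sum to $\frac14B_{2,\chi_0}$, the index collapses to $|\det U|\prod_{\chi\neq\chi_0}\frac14B_{2,\chi}$. Since every nonprincipal character modulo $p^k$ has conductor divisible by $p$, one has $B_{2,\chi}=B_{2,\chi_f}$ and the correction factors in Theorem~A are all equal to $1$, so this already matches Yu's formula apart from the power of $p$.

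The crux, and the step I expect to be the main obstacle, is therefore the purely combinatorial identity $|\det U|=p^{L(p)}=p^{\,p^{k-1}-2k+2}$. Here $U$ is built from near-unipotent \emph{difference} blocks on each level (as the matrices $U_1,U_2$ were in the prime case), the exponent-$p$ factor coming from lines such as $E_{a^{i-1}}^p/E_{a^{i+\phi_{k-1}-1}}^p$, and the scalings $p^{k-\ell}$ produced by the distribution relation. I would evaluate $\det U$ by clearing these common $p$-factors level by level and reducing each level's block against the next, keeping careful track of how the orders $p^{k-\ell}$ and the bridging steps between consecutive levels accumulate; verifying that the exponents sum to exactly $p^{k-1}-2k+2$ is the delicate bookkeeping that makes the theorem work.
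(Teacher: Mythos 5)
Your overall route is exactly the paper's: verify membership via Lemma \ref{lemma: method 1} and Corollary \ref{corollary: method 2}, then compute the index of the lattice spanned by the $\div f_i$ inside $\DD_1^\infty(p^k)$ using Lemma \ref{lemma: lattice index}, with $\det M$ supplied by Lemma \ref{lemma: Bernoulli matrix} and the divisors of the lower-level functions expressed through the distribution relation of Lemma \ref{lemma: Bernoulli relation} as $p^{k-\ell}$ times sums of rows of $M$ indexed by orbits $\O_{h,p^{k-\ell}}$. Everything you actually carry out is correct: the observation that $a^{\phi_{\ell-1}}\equiv\pm(1+mp^{\ell-1})\pmod{p^\ell}$ with $p\nmid m$, the use of $a^2b^2\equiv 1\pmod p$ to invoke Lemma \ref{lemma: method 1}, the remark that for $N=p^k$ every nonprincipal even character has $p\mid f$ so that $B_{2,\chi}=B_{2,\chi_f}$ and Yu's Euler factors are trivial, and the normalization of the auxiliary last row (your choice of a single row of $M$ is in fact slightly cleaner than the paper's).

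The gap is precisely the step you defer: the identity $|\det U|=p^{p^{k-1}-2k+2}$ is not ``delicate bookkeeping'' to be filled in later --- it is where all the content of the theorem lives, and it is the reason the basis has its particular shape (the exponents $b^2$, the $p$-th power ``bridging'' function at $i=\phi_k-\phi_{k-1}$, and the specific orbit representatives are chosen exactly so that this determinant comes out right). The paper resolves it by never forming $U$ as a single opaque matrix: it exhibits the coefficient matrix as an explicit product $U_1U_2\cdots U_{k-1}U_k'U_k$, where each $V_\ell$ (the nontrivial block of $U_\ell$) is a $p\times p$ block matrix, bidiagonal with blocks $I,-I$ in the first $p-1$ block rows and $(pI,\ldots,pI)$ in the last, so that $\det V_\ell=p^{2\phi_{\ell-1}}$, while $\det U_k'=p$ and $\det U_1=1$; summing, $\det(U_1\cdots U_k'U_k)=p^{1+\sum_{\ell=1}^{k-1}\phi(p^\ell)}=p^{p^{k-1}}$, and the last-row normalization contributes the remaining $p^{-(2k-2)}$. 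Your plan of ``clearing $p$-factors level by level and reducing each level's block against the next'' is, if executed, essentially a rediscovery of this factorization, but as written it is an announcement rather than a proof; in particular nothing in your sketch certifies that the cross terms between the level-$k$ block (with its $b^2$ entries) and the lower-level orbit rows do not contribute extra factors. To complete the argument you need to write down the factorization (or an equivalent explicit sequence of unimodular-up-to-$p$-powers row operations) and verify the block determinants.
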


\begin{proof} Form a $\phi_k\times\phi_k$ matrix $M=(M_{ij})$, where
$$
  M_{ij}=\frac{p^k}2B_2\left(\frac{a^{i+j-2}}{p^k}\right).
$$
In other words, $M_{ij}$ is the order of $E_{a^{i-1}}$ at
$a^{j-1}/p^k$. For $\ell=2,\ldots,k$, define a
$\phi_\ell\times\phi_\ell$ matrix $V_\ell$ by
\begin{equation} \label{equation: Vell}
  V_\ell=\begin{pmatrix}
  I & -I & 0 & \cdots & \cdots & \cdots \\
  0 & I & -I & \cdots & \cdots & \cdots \\
  \vdots & \vdots & & & \vdots & \vdots \\
  \cdots & \cdots & \cdots & I & -I & 0 \\
  \cdots & \cdots & \cdots & 0 & I & -I \\
  pI & pI & \cdots & \cdots & pI & pI \end{pmatrix}.
\end{equation}
Here the matrix consists of $p^2$ blocks, each of which is of
$\phi_{\ell-1}\times\phi_{\ell-1}$ size, and $I$ is the
identity matrix, while $0$ is the zero matrix. 
Define also $\phi_k\times\phi_k$ matrices $U_\ell$, $\ell=1,\ldots,k$, by
\begin{equation} \label{equation: Uk}
  U_k=V_k
\end{equation}
and for $\ell=2,\ldots,k-1$,
\begin{equation} \label{equation: Uell}
  U_\ell=\begin{pmatrix}
  I & 0 & 0 & \cdots & \cdots & \cdots \\
  0 & I & 0 & \cdots & \cdots & \cdots \\
  \vdots & \vdots & & & \vdots & \vdots \\
  \cdots & \cdots & \cdots & I & 0 & 0 \\
  \cdots & \cdots & \cdots & 0 & I & 0 \\
  0 & 0 & \cdots & \cdots & 0 & V_\ell \end{pmatrix}.
\end{equation}
In other words, $U_\ell$ is obtained by replacing the lower right
$\phi_\ell\times\phi_\ell$ block of an $\phi_k\times\phi_k$
identity matrix by $V_\ell$. Also, define $U_1$ to be
$$
  U_1=\begin{pmatrix}
  I & 0 & 0 & \cdots & \cdots & \cdots \\
  0  & 1 & -1 & 0 & 0 & \cdots \\
  0  & 0 & 1 & -1 & 0 & \cdots \\
  \vdots & \vdots & & & \vdots & \vdots \\
  0 & 0 & \cdots & \cdots & 1 & -1 \\
  0 & 0 & \cdots & \cdots & 0 & 1\end{pmatrix},
$$
where the identity block in the upper left corner is of size
$\phi_k-\phi_1$. Finally, let $b$ be the multiplicative
inverse of $a$ modulo $p$ and define $U_k'$ by
\begin{equation} \label{equation: Uk'}
  U_k'=\begin{pmatrix}
  1 & -b^2 & 0 & \cdots & \cdots & \cdots \\
  0 & 1 & -b^2 & \cdots & \cdots & \cdots \\
  \vdots & \vdots & & & \vdots & \vdots \\
  \cdots & \cdots & \cdots & 1 & -b^2 & 0 \\
  \cdots & \cdots & \cdots & 0 & p & 0 \\
  0 & 0 & \cdots & \cdots & 0 & I \end{pmatrix}
\end{equation}
where the $I$ in the lower right corner is the identity matrix of size
$\phi_{k-1}$, and the diagonals are all $1$ except for the row
just above $I$. Also, the $(i,i+1)$-entries are $-b^2$ for
$i=1,\ldots,\phi_k-\phi_{k-1}-1$. Now let us consider the
effect of the multiplication of $M$ by 
$U_1U_2\ldots U_{k-1}U_k'U_k$ on the left. We will show that the first
$\phi_k-1$ rows of the resulting matrix will be a basis for the lattice
corresponding to $\div\FF_1^\infty(p^k)$.

Clearly, the first $\phi_k-\phi_{k-1}$ rows of $U_kM$ record
the orders of the functions
$E^{(p^k)}_{a^{i-1}}/E^{(p^k)}_{a^{i+\phi_{k-1}-1}}$. The entries in
the last few rows of $U_kM$ take the form
$$
  p\sum_{m=0}^{p-1}\frac{p^k}2B_2
  \left(\frac{a^{i+j+m\phi_{k-1}-2}}{p^k}\right).
$$
Now observe that as $m$ runs through $0$ to $p-1$, $a^{m\phi_{k-1}}$
modulo $p^k$ goes through exactly once elements of
$\{1+\ell p^{k-1}:\ell=0,\ldots,p-1\}$ modulo $\pm 1$. (Note that
since $a$ is a generator of $(\Z/p^k\Z)^\times$,
$a^{(p-1)/2}=-1+\ell p$ for some $\ell$ not divisible by $p$. Then
$a^{\phi_{k-1}}=(-1+\ell p)^{p^{k-2}}\equiv -1+\ell p^{k-1}\mod p^k$.)
Then by the relation between generalized Bernoulli numbers given in
Lemma \ref{lemma: Bernoulli relation}, the above sum is equal to
$$
  p\cdot\frac{p^{k-1}}2B_2\left(\frac{a^{i+j-2}}{p^{k-1}}\right),
$$
which we can interpret as the order of $E^{(p^{k-1})}_{a^{i-1}}(p\tau)$ at
cusps $a^{j-1}/p^k$.

Then the first $\phi_k-\phi_{k-1}-1$ rows of $U_k'U_kM$ will be the
orders of
$$
  f_i=\frac{E_{a^{i-1}}E_{a^{i+\phi_{k-1}}}^{b^2}}
  {E_{a^{i+\phi_{k-1}-1}}E_{a^i}^{b^2}},
  \qquad i=1,\ldots,\phi_k-\phi_{k-1}-1,
$$
at cusps. Also, the $(\phi_k-\phi_{k-1})$-th row of
$U_k'U_kM$ corresponds to the function
$$
  f_{\phi_k-\phi_{k-1}}=
  \frac{E_{a^{\phi_k-\phi_{k-1}-1}}^p}{E_{a^{\phi_k-1}}^p}.
$$
Since $a^{\phi_{k-1}}=a^{p^{k-2}(p-1)/2}\equiv\pm1\mod p^{k-1}$, by Lemma
\ref{lemma: method 1}, these functions $f_i$ are all in
$\FF_1^\infty(p^k)$. Summarizing, we find that the rows of $U_k'U_kM$
record the orders of the functions
$$
  f_1,\ldots,f_{\phi_k-\phi_{k-1}},E^{(p^{k-1})}_1(p\tau),
  E^{(p^{k-1})}_{a^{\phi_k-\phi_{k-1}}}(p\tau),\ldots,
  E^{(p^{k-1})}_{a^{\phi_k-1}}(p\tau)
$$
at cusps in $C_1^\infty(p^k)$.

Now consider $U_{k-1}U_k'U_kM$. The multiplication of $U_{k-1}$ on the
left leaves the first $\phi_k-\phi_{k-1}$ rows invariant. The next
$\phi_{k-1}-\phi_{k-2}$ rows now record the order of
$$
  f_i=\frac{E^{(p^{k-1})}_{a^{i-1}}(p\tau)}
  {E^{(p^{k-1})}_{a^{i+\phi_{k-2}-1}}(p\tau)}, \qquad
  i=\phi_k-\phi_{k-1}+1,\ldots,\phi_k-\phi_{k-2},
$$
which, by Corollary \ref{corollary: method 2}, are all modular
functions contained in $\FF_1^\infty(p^k)$. Also, the last $\phi_{k-2}$
rows of $U_{k-1}U_k'U_kM$ now have
$$
  p^2\sum_{m=0}^{p-1}\frac{p^{k-1}}2B_2\left(
  \frac{a^{i+j+m\phi_{k-2}-2}}{p^{k-1}}\right)
$$
as entries. By Lemma \ref{lemma: Bernoulli relation}, the above sum is
equal to
$$
  p^2\cdot\frac{p^{k-2}}2B_2\left(\frac{a^{i+j-2}}{p^{k-2}}\right).
$$
This number is precisely the order of
$E^{(p^{k-2})}_{a^{i-1}}(p^2\tau)$ at $a^{j-1}/p^k$.

Continuing in the same way, we find that the rows of $U_1U_2\ldots
U_{k-1}U_k'U_kM$ represent the orders of the functions
$$
  \begin{cases}
  f_i=\frac{E_{a^{i-1}}E_{a^{i+\phi_{k-1}}}^{b^2}}
  {E_{a^{i+\phi_{k-1}-1}}E_{a^i}^{b^2}},
  &i=1,\ldots,\phi_k-\phi_{k-1}-1, \\
  f_i=\frac{E_{a^{i-1}}^p}{E_{a^{i+\phi_{k-1}-1}}^p},
  &i=\phi_k-\phi_{k-1}, \\
  f_i=\frac{E^{(p^{k-1})}_{a^{i-1}}(p\tau)}
  {E^{(p^{k-1})}_{a^{i+\phi_{k-2}-1}}(p\tau)},
  &i=\phi_k-\phi_{k-1}+1,\ldots,\phi_k-\phi_{k-2}, \\
  \qquad \vdots & \qquad \vdots \\
  f_i=\frac{E^{(p^2)}_{a^{i-1}}(p^{k-2}\tau)}
    {E^{(p^2)}_{a^{i+\phi_1-1}}(p^{k-2}\tau)},
  &i=\phi_k-\phi_2+1,\ldots,\phi_k-\phi_1, \\
  f_i=\frac{E^{(p)}_{a^{i-1}}(p^{k-1}\tau)}
  {E^{(p)}_{a^i}(p^{k-1}\tau)}, &i=\phi_k-\phi_1+1,\ldots,\phi_k-1, \\
  E^{(p)}_{a^{i-1}}(p^{k-1}\tau), &i=\phi_k.
  \end{cases}
$$
Except for the last one, the functions are all modular functions
belonging to $\FF_1^\infty(p^k)$. We now prove that these functions
form a basis for $\FF_1^\infty(p^k)$ modulo $\C^\times$. In view of
Lemma \ref{lemma: lattice index}, we need to show that the absolute
value of the determinant of $U_1\ldots U_{k-1}U_k'U_kM$ divided by the
sum of the entries in its last row is equal to $h_1^\infty(p^k)$.

We first determine the sum of the entries in the last row of
$U_1\ldots U_{k-1}U_k'U_kM$. Let $w$ denote the sequence
$1,0,\ldots,0$, of length $(p-1)/2$, where the first number is $1$ and
the rest are $0$. Then inductively, we can show that the last row
of $U_1\ldots U_\ell$ is $(0,\ldots,0,p^{\ell-1}w,\ldots,p^{\ell-1}w)$
for $1<\ell<k$, where there are $p^{\ell-1}$ copies of $p^{\ell-1}w$
at the end of the row with the preceding entries all being $0$. The
multiplication on the right by $U_k'$ does not change the last row of
$U_1\ldots U_{k-1}$. Then we find the last row of $U_1\ldots
U_{k-1}U_k'U_k$ is
$(p^{k-1}w,\ldots,p^{k-1}w)=(p^{k-1},0,\ldots,0,p^{k-1},\ldots)$,
where there are $(p-3)/2$ zeros between two $p^{k-1}$. Thus, the sum
of the entries in the last row of $U_1\ldots U_{k-1}U_k'U_kM$ is
$$
  p^{2k-2}\sum_{j\le p^k/2,\,p\nmid j}\frac{p^k}2B_2(j/p^k)
 =\frac{p^{2k-2}}4B_{2,\chi_0}.
$$

Now we have $\det U_1=1$, $\det U_k'=p$, and
$\det U_\ell=p^{2\phi_{\ell-1}}$ for $\ell=2,\ldots,k$. Also, by Lemma
\ref{lemma: Bernoulli matrix}, up to a $\pm1$ sign,
$$
  \det M=\prod_{\chi}\frac14B_{2,\chi}
$$
Thus, the index is equal to the absolute value of
\begin{equation*}
\begin{split}
  \frac4{p^{2k-2}B_{2,\chi_0}}\cdot
  p^{2(\phi_1+\phi_2+\cdots+\phi_{k-1})}\cdot p\cdot
  \prod_{\chi}\frac14B_{2,\chi}
 =p^{p^{k-1}-2k+2}\prod_{\chi\neq\chi_0}\frac14B_{2,\chi}.
\end{split}
\end{equation*}
This is exactly the class number given in Yu's formula. In other
words, $f_1$, $i=1,\ldots,\phi_k-1$, generate $\FF_1^\infty(p^k)$.
This completes the proof.
\end{proof}
\medskip

\noindent{\bf Example.} Consider $N=27$. A generator of
$(\Z/27\Z)^\times$ is $2$. With the notations as above, we have
$$
  M=\frac1{108}\left(\begin{smallmatrix}
  191&143&59&-61&-109&23&-97&-37&-121 \\
  143&59&-61&-109&23&-97&-37&-121&191 \\
  59&-61&-109&23&-97&-37&-121&191&143 \\
  -61&-109&23&-97&-37&-121&191&143&59 \\
  -109&23&-97&-37&-121&191&143&59&-61 \\
  23&-97&-37&-121&191&143&59&-61&-109 \\
  -97&-37&-121&191&143&59&-61&-109&23 \\
  -37&-121&191&143&59&-61&-109&23&-97 \\
  -121&191&143&59&-61&-109&23&-97&-37
  \end{smallmatrix}\right),
$$
where the $(i,j)$-entry is $27B_2(2^{i+j-2}/27)/2$, $U_1$ is equal to
the identity matrix,
$$
  U_2=\left(\begin{smallmatrix}
  1&0&0&0&0&0&0&0&0 \\
  0&1&0&0&0&0&0&0&0 \\
  0&0&1&0&0&0&0&0&0 \\
  0&0&0&1&0&0&0&0&0 \\
  0&0&0&0&1&0&0&0&0 \\
  0&0&0&0&0&1&0&0&0 \\
  0&0&0&0&0&0&1&-1&0 \\
  0&0&0&0&0&0&0&1&-1 \\
  0&0&0&0&0&0&3&3&3\end{smallmatrix}\right), \qquad
  U_3=\left(\begin{smallmatrix}
  1&0&0&-1&0&0&0&0&0 \\
  0&1&0&0&-1&0&0&0&0 \\
  0&0&1&0&0&-1&0&0&0 \\
  0&0&0&1&0&0&-1&0&0 \\
  0&0&0&0&1&0&0&-1&0 \\
  0&0&0&0&0&1&0&0&-1 \\
  3&0&0&3&0&0&3&0&0 \\
  0&3&0&0&3&0&0&3&0 \\
  0&0&3&0&0&3&0&0&3\end{smallmatrix}\right),
$$
and
$$
  U_3'=\left(\begin{smallmatrix}
  1&-1&0&0&0&0&0&0&0 \\
  0&1&-1&0&0&0&0&0&0 \\
  0&0&1&-1&0&0&0&0&0 \\
  0&0&0&1&-1&0&0&0&0 \\
  0&0&0&0&1&-1&0&0&0 \\
  0&0&0&0&0&3&0&0&0 \\
  0&0&0&0&0&0&1&0&0 \\
  0&0&0&0&0&0&0&1&0 \\
  0&0&0&0&0&0&0&0&1\end{smallmatrix}\right).
$$
Then setting $Q=U_1U_2U_3'U_3M$, we have
$$
  Q=\begin{pmatrix}
  0&2&0&1&-2&4&-1&0&-4 \\
  2&0&1&-2&4&-1&0&-4&0 \\
  0&1&-2&4&-1&0&-4&0&2 \\
  1&-2&4&-1&0&-4&0&2&0 \\
  -2&4&-1&0&-4&0&2&0&1 \\
  4&-8&-5&-5&7&7&1&1&-2\\
  1&1&-2&1&1&-2&1&1&-2 \\
  1&-2&1&1&-2&1&1&-2&1 \\
  c&c&c&c&c&c&c&c&c \end{pmatrix}, \quad c=-3/4.
$$
The first $8$ rows correspond to the divisors of the functions
$$
  \frac{E_1E_{11}}{E_2E_8},\ \frac{E_2E_5}{E_4E_{11}},\
  \frac{E_4E_{10}}{E_8E_5},\ \frac{E_8E_7}{E_{11}E_{10}},\
  \frac{E_{11}E_{13}}{E_5E_7},\ \frac{E_5^3}{E_{13}^3},\
  \frac{E^{(9)}_1(3\tau)}{E^{(9)}_2(3\tau)}, \
  \frac{E^{(9)}_2(3\tau)}{E^{(9)}_4(3\tau)}.
$$
The determinant of $Q$ is $-4252257/4$. Thus, by Lemma \ref{lemma:
  lattice index}, the index is $4252257/4\times
4/27=3^3\cdot19\cdot307$, which is indeed what one would get from
\eqref{equation: Yu's formula}. We now determine the structure of the
divisor class group $\DD_1^\infty(27)/\div\FF_1^\infty(27)$.

Let $P_i$ denote the cusps $2^{i-1}/27$. Let $Q'$ be the $8\times 9$
matrix formed by deleting the last row of $Q$. Then the Hermite normal
form for $Q'$ is
$$
  \begin{pmatrix}
  1&0&0&0&0&0&-1&13842&-13842\\0&1&0&0&0&0&0&-19882&19881\\
  0&0&1&0&0&0&0&2511&-2512\\0&0&0&1&0&0&-1&-17037&17037\\
  0&0&0&0&1&0&0&11942&-11943\\0&0&0&0&0&1&0&7047&-7048\\
  0&0&0&0&0&0&3&-22245&22242\\0&0&0&0&0&0&0&52497&-52497\end{pmatrix}.
$$
From this we see that the divisor class group is isomorphic to
$C_{52497}\times C_3$, and generated by the classes of
$$
  v_1=(P_8)-(P_9), \qquad
  v_2=(P_7)-7415(P_8)+7714(P_9).
$$
Furthermore, for a divisor $\sum_{i=1}^9d_i(P_i)$ of degree $0$, we
have
\begin{equation*}
\begin{split}
  \sum_{i=1}^9d_i(P_i)
 &\sim(-6427d_1+19882d_2-2511d_3+24452d_4-11942d_5-7047d_6 \\
 &\qquad\qquad+7415d_7+d_8)v_1+(d_1+d_4+d_7)v_2.
\end{split}
\end{equation*}
The divisor is principal if and only $52497$ and $3$ divide the
coefficients of $v_1$ and $v_2$, respectively.

\begin{Theorem} \label{theorem: power of 2} Let $k\ge 3$ and $N=2^k$.
  Let $a$ be a generator of the cyclic group $(\Z/2^k\Z)^\times/\pm1$.
  For $\ell\ge 2$, set $\phi_\ell=\phi(2^\ell)/2=2^{\ell-2}$. Then a
  basis for $\FF_1^\infty(2^k)$ modulo $\C^\times$ is given by
$$
  \begin{cases}
  f_i=\frac{E_{a^{i-1}}E_{a^{i+\phi_{k-1}}}}
    {E_{a^i}E_{a^{i+\phi_{k-1}-1}}}, &i=1,\ldots,\phi_k-\phi_{k-1}-1, \\
  f_i=\frac{E_{a^{i-1}}^2}{E_{a^{i+\phi_{k-1}-1}}^2},&i=\phi_k-\phi_{k-1}, \\
  f_i=\frac{E_{a^{i-1}}^{(2^{k-1})}(2\tau)}
    {E_{a^{i+\phi_{k-2}-1}}^{(2^{k-1})}(2\tau)},
    &i=\phi_k-\phi_{k-1}+1,\ldots,\phi_k-\phi_{k-2}, \\
  \qquad\vdots & \qquad\vdots \\
  f_i=\frac{E_{a^{i-1}}^{(8)}(2^{k-3}\tau)}
    {E_{a^i}^{(8)}(2^{k-3}\tau)}, &i=\phi_k-1.
  \end{cases}
$$
\end{Theorem}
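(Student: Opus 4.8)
The plan is to run exactly the same machine as in the proof of Theorem~\ref{theorem: prime power}, merely re-tuning every step to the prime $p=2$. First I would embed $\DD_1^\infty(2^k)$ into the hyperplane $\sum x_i=0$ of $\R^{\phi_k}$ (with $\phi_k=\phi(2^k)/2=2^{k-2}$), form the $\phi_k\times\phi_k$ matrix $M$ whose $(i,j)$-entry is the order $\frac{2^k}2 B_2(a^{i+j-2}/2^k)$ of $E_{a^{i-1}}$ at the cusp $a^{j-1}/2^k$ (Proposition~\ref{proposition: behavior of Eg}), and then left-multiply $M$ by a product of block matrices engineered so that the first $\phi_k-1$ rows of the result read off as the divisors of the functions $f_i$ in the statement. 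Concretely I would reuse the matrices $V_\ell$ and $U_\ell$ of \eqref{equation: Vell}--\eqref{equation: Uell}, set $U_k=V_k$, take $U_k'$ as in \eqref{equation: Uk'} but with every $b^2$ replaced by $1$ (since $b^2\equiv1\bmod2$) and with the single entry $p$ replaced by $2$, and finish with a bottom bidiagonal matrix playing the role of the odd-case $U_1$. Once the rows are identified, Lemma~\ref{lemma: lattice index} reduces everything to showing that $|\det(\text{product}\cdot M)|$ divided by the sum of the entries of its last row equals $h_1^\infty(2^k)=2^{2^{k-1}-2k+3}\prod_{\chi\neq\chi_0}\tfrac14B_{2,\chi}$ (Theorem~A$'$, with the corrected exponent $L(2)=2^{k-1}-2k+3$).

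To check that the $f_i$ really lie in $\FF_1^\infty(2^k)$ I would invoke Lemma~\ref{lemma: method 1} for the top-level quotients and Corollary~\ref{corollary: method 2} for the functions coming from lower levels, exactly as in the odd case. The one genuinely new arithmetic input is the identity $a^{\phi_{k-1}}=a^{2^{k-3}}\equiv\pm(1+2^{k-1})\bmod2^k$: the odd-case derivation via $a^{(p-1)/2}=-1+\ell p$ is meaningless for $p=2$, so instead I would argue that, since $(\Z/2^k\Z)^\times/\{\pm1\}$ is cyclic of order $2^{k-2}$ and $a$ generates it, the element $a^{2^{k-3}}$ is the unique element of order $2$ in this quotient, namely $\pm(1+2^{k-1})$. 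With this in hand $E_{a^{i+\phi_{k-1}}}=\pm E_{(1+2^{k-1})a^{i}}$ (and likewise for the shifted index), so the top quotients match the shape $E_g/E_{g(1+m2^{k-1})}$ required by Lemma~\ref{lemma: method 1} (the exponent $2$ in $f_{\phi_k-\phi_{k-1}}$ being forced precisely because $\sum g^2e_g$ must vanish mod $2$). The same identity, one level at a time, makes the bottom $\phi_{\ell-1}$ rows of each partially transformed matrix a sum over $m=0,1$ of Bernoulli values that collapses, via the distribution relation \eqref{equation: Bernoulli relation} of Lemma~\ref{lemma: Bernoulli relation}, to the order of the lower-level function $E^{(2^{\ell-1})}_{a^{i-1}}(2\tau)$.

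The structural difference from the odd case, and the step I expect to be the crux, is the bottom of the recursion and the exact power of $2$ it produces. Because $\phi_1=\phi(2)/2$ is not an integer and level $4$ carries only $\phi_2=1$ essentially distinct $E^{(4)}_g$, the cascade of level-lowerings must stop at level $8$, where $\phi_3=2$ leaves room for exactly one quotient. Thus there are only $k-3$ step-down matrices $U_k,\dots,U_4$ rather than $k-1$, and the terminal block has size $\phi_3=2$ rather than $\phi_1$. I would then assemble the index as follows: every row of $M$ has the common sum $\tfrac14B_{2,\chi_0}$ (summing $B_2$ over all odd residues mod $\pm1$); the last row of the transform is $2^{k-3}$ times a pattern of $2^{k-3}$ blocks of length $\phi_3=2$, so the last row of $(\text{product}\cdot M)$ sums to $2^{2k-6}\cdot\tfrac14B_{2,\chi_0}$; and $\det(\text{product})=\det U_k'\cdot\prod_{\ell=4}^{k}\det V_\ell=2\cdot2^{2\sum_{j=3}^{k-1}\phi_j}=2^{2^{k-1}-3}$, while $\det M=\pm\prod_{\chi}\tfrac14B_{2,\chi}$ by Lemma~\ref{lemma: Bernoulli matrix}. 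Dividing gives the index $2^{2^{k-1}-3-(2k-6)}\prod_{\chi\neq\chi_0}\tfrac14B_{2,\chi}=2^{2^{k-1}-2k+3}\prod_{\chi\neq\chi_0}\tfrac14B_{2,\chi}$, matching $h_1^\infty(2^k)$. The hard part is precisely the bookkeeping that yields the ``$+3$'' in place of the ``$+2$'' of the odd-prime formula: this lone extra factor of $2$ is exactly where Yu's original statement was off, and tracing it correctly hinges on the reduced step-down count $k-3$ together with the enlarged terminal block $\phi_3=2$.
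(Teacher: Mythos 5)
Your proposal is correct and follows essentially the same route as the paper, which itself proves this theorem by rerunning the odd prime power argument with the matrices $V_\ell$, $U_\ell$ defined only for $3\le\ell\le k$ and checking the determinant against $L(2)=2^{k-1}-2k+3$; your bookkeeping (terminal unipotent block of size $\phi_3=2$, $\det=2^{2^{k-1}-3}$, last-row sum $2^{2k-6}\cdot\tfrac14B_{2,\chi_0}$) is internally consistent and yields the right index, differing from the paper's example only in that you keep the last row at level $8$ rather than collapsing once more to level $4$. Your replacement of the inapplicable identity $a^{(p-1)/2}=-1+\ell p$ by the observation that $a^{2^{k-3}}$ is the unique order-two class $\pm(1+2^{k-1})$ in $(\Z/2^k\Z)^\times/\pm1$ is exactly the detail the paper leaves implicit.
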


\begin{proof} Let $M$ be the matrix whose $(i,j)$-entry is
  $2^{k-1}B_2(a^{i+j-2}/2^k)$. The proof follows exactly the same way
  as the odd prime power case, except for that the matrices $V_\ell$
  and $U_\ell$ in \eqref{equation: Vell} and \eqref{equation: Uell}
  are defined only for $3\le\ell\le k$. Then the first $\phi_k-1$ rows of
  $U_3\ldots U_{k-1}U_kM$ will be the orders of the functions $f_i$,
  $i=1,\ldots,\phi_k-1$, at the cusps $a^{j-1}/2^k$, while the entries
  in the last row are all $2^{k-4}B_{2,\chi_0}$. We then use the
  determinant argument to show that $\{f_i\}$ is a basis. We omit the
  details here.
\end{proof}

\noindent{\bf Example.} Let $N=32$ and $a=3$. We set
$$
  M=(M_{ij})_{i,j=1,\ldots,8}, \quad
  M_{i,j}=32B_2\left(\frac{3^{i+j-2}}{32}\right),
$$
$$
  U_5=\left(\begin{smallmatrix}
  1&0&0&0&-1&0&0&0\\0&1&0&0&0&-1&0&0\\0&0&1&0&0&0&-1&0\\0&0&0&1&0&0&0&-1\\
  2&0&0&0&2&0&0&0\\0&2&0&0&0&2&0&0\\0&0&2&0&0&0&2&0\\0&0&0&2&0&0&0&2
  \end{smallmatrix}\right), \quad
  U_5'=\left(\begin{smallmatrix}
  1&-1&0&0&0&0&0&0\\0&1&-1&0&0&0&0&0\\0&0&1&-1&0&0&0&0\\0&0&0&2&0&0&0&0\\
  0&0&0&0&1&0&0&0\\0&0&0&0&0&1&0&0\\0&0&0&0&0&0&1&0\\0&0&0&0&0&0&0&1
  \end{smallmatrix}\right),
$$
$$
  U_4=\left(\begin{smallmatrix}
  1&0&0&0&0&0&0&0\\0&1&0&0&0&0&0&0\\0&0&1&0&0&0&0&0\\0&0&0&1&0&0&0&0\\
  0&0&0&0&1&0&-1&0\\0&0&0&0&0&1&0&-1\\0&0&0&0&2&0&2&0\\0&0&0&0&0&2&0&2
  \end{smallmatrix}\right), \quad
  U_3=\left(\begin{smallmatrix}
  1&0&0&0&0&0&0&0\\0&1&0&0&0&0&0&0\\0&0&1&0&0&0&0&0\\0&0&0&1&0&0&0&0\\
  0&0&0&0&1&0&0&0\\0&0&0&0&0&1&0&0\\0&0&0&0&0&0&1&-1\\0&0&0&0&0&0&2&2
  \end{smallmatrix}\right).
$$
Then we have
$$
  Q=U_3U_4U_5'U_5M=\begin{pmatrix}
  1&3&-2&5&-1&-3&2&-5\\3&-2&5&-1&-3&2&-5&1\\
  -2&5&-1&-3&2&-5&1&3\\3&-7&-5&1&-3&7&5&-1\\
  3&1&-3&-1&3&1&-3&-1\\1&-3&-1&3&1&-3&-1&3\\
  2&-2&2&-2&2&-2&2&-2\\c&c&c&c&c&c&c&c\end{pmatrix}, \quad c=-1/3,
$$
where the first $7$ rows correspond to the order of the functions
$$
  \frac{E_1E_{13}}{E_3E_{15}},\ \frac{E_3E_7}{E_9E_{13}}, \
  \frac{E_9E_{11}}{E_5E_7},\ \frac{E_5^2}{E_{11}^2},\
  \frac{E_1^{(16)}(2\tau)}{E_7^{(16)}(2\tau)},\
  \frac{E_3^{(16)}(2\tau)}{E_5^{(16)}(2\tau)},\
  \frac{E_1^{(8)}(4\tau)}{E_3^{(8)}(4\tau)}
$$
at cusps $3^{j-1}/32$, $j=1,\ldots,8$. From this we deduce that the
class number is $2^6\cdot3^2\cdot5\cdot97=279360$, as expected.

Let $P_j$, $j=1,\ldots,8$, denote the cusps $3^{j-1}/32$.
We now determine the structure of the divisor class group.
Essentially, this amounts to computing the Hermite normal form for
$Q$. Let $Q'$ be the $7\times 8$ matrix formed by deleting the last
row of $Q$. Then there is a unimodular matrix $U$ such that
$$
  UQ'=\begin{pmatrix}
  1&0&0&0&1&2&-4754&4750\\
  0&1&0&0&0&3&-5336&5332\\
  0&0&1&0&0&-4&-3865&3868\\
  0&0&0&1&0&-4&2536&-2533\\
  0&0&0&0&2&2&-354&350\\
  0&0&0&0&0&12&552&-564\\
  0&0&0&0&0&0&11640&-11640\end{pmatrix}.
$$
From this we see that the divisor class group
$\DD_1^\infty(32)/\div\FF_1^\infty(32)$ is isomorphic to
$C_{11640}\times C_{12}\times C_2$, where each component is generated by
$$
  v_1=(P_7)-(P_8), \ v_2=(P_6)+46(P_7)-47(P_8), \
  v_3=(P_5)+(P_6)-177(P_7)+175(P_8),
$$
respectively. Moreover, for a divisor $\sum_{i=1}^8d_i(P_i)$ of degree
$0$, we have
\begin{equation*}
\begin{split}
  \sum_{i=1}^8 d_i(P_i)
 &\sim(4623d_1+5474d_2+3681d_3-2720d_4+223d_5-46d_6+d_7)v_1 \\
 &\qquad +(-d_1-3d_2+4d_3+4d_4-d_5+d_6)v_2+(-d_1+d_5)v_3,
\end{split}
\end{equation*}
and it is principal if and only if the three coefficients are
congruent to $0$ modulo $11640$, $12$, and $2$, respectively.
\end{subsection}
\end{section}

\begin{section}{Non-prime power cases} \label{section: non-prime power}
\begin{subsection}{Squarefree cases} \label{subsection: squarefree}
Here we consider squarefree composite cases.

\begin{Theorem} \label{theorem: squarefree} Let $N$ be a composite
  squarefree integer, and set
  \begin{equation} \label{equation: S}
    S=\{g_1,\ldots,g_{\phi(N)/2}\}=\{g:~1\le g\le N/2,~(g,N)=1\}.
  \end{equation}
  For each integer $g$ in $S$ and each proper divisor $k$
  of $N$, define $g(k)$ to be the unique integer satisfying
  $$
    \begin{cases}
    g(k)\equiv 0&\mod k, \\
    g(k)\equiv\pm g&\mod N/k,
    \end{cases}
  $$
  in the range $1\le g(k)\le N/2$. For $g\in S$, set
  \begin{equation} \label{equation: Fg}
    F_g^{(N)}(\tau)=F_g(\tau)=\prod_{k|N,k\neq N}
    E_{g(k)}^{(N)}(\tau)^{\mu(k)},
  \end{equation}
  where $\mu(k)$ is the Mobius function. Then a basis for
  $\FF_1^\infty(N)$ modulo scalars is
  $F_{g_i}^{(N)}/F_{g_{i+1}}^{(N)}$, $i=1,\ldots,\phi(N)/2-1$.
\end{Theorem}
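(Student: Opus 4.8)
The plan is to invoke Yu's characterization (Theorem B): since $N$ is composite, $\prod_h E_h^{e_h}$ lies in $\FF_1^\infty(N)$ modulo $\C^\times$ exactly when the orbit condition $\sum_{h\in\O_{a,p}}e_h=0$ holds for every prime $p\mid N$ and every $a$. Thus $\FF_1^\infty(N)/\C^\times$ is identified with the lattice $\Lambda$ of integer exponent vectors $(e_h)$, indexed by $h\in(\Z/N\Z)/\pm1\smallsetminus\{0\}$, cut out by all orbit conditions, and it suffices to prove that the vectors $v_{g_i}-v_{g_{i+1}}$ attached to $F_{g_i}/F_{g_{i+1}}$ form a $\Z$-basis of $\Lambda$, where $v_g$ denotes the exponent vector of $F_g$.

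First I would verify membership. Since $g(k)$ satisfies $\gcd(g(k),N)=k$, distinct proper divisors $k$ contribute distinct indices, each with exponent $\mu(k)$. Fixing a prime $p$, the involution $k\mapsto kp$ (resp.\ $k/p$) pairs the divisors of $N$, and one checks, by comparing residues modulo each prime $\ell\mid N/p$, that $g(k)$ and $g(kp)$ lie in a common $p$-orbit; as $\mu(kp)=-\mu(k)$, their contributions cancel in every orbit sum. The only exception is the unpaired divisor $k=N/p$, whose partner $N$ is excluded; its index $g(N/p)$ is the unique one with $(N/p)\mid g(k)$, so it lands in $\O_{0,p}$ and produces a defect $\mu(N/p)$ there, independent of $g$. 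Hence $v_g$ satisfies every orbit condition except at the orbits $\O_{0,p}$, where its defect is the $g$-independent quantity $\mu(N/p)$. Consequently $v_g-v_{g'}$ satisfies all orbit conditions, so $F_{g}/F_{g'}\in\FF_1^\infty(N)$ by Theorem B.

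Next, let $\pi$ be the projection onto the unit coordinates $h\in S$. Because only $k=1$ yields a unit $g(1)=g$, we get $\pi(v_g)=\delta_g$; hence $v_{g_i}-v_{g_{i+1}}$ project to the independent differences $\delta_{g_i}-\delta_{g_{i+1}}$, so they span a sublattice $\Lambda'\subseteq\Lambda$ of rank $\phi(N)/2-1$. Since $\div$ is injective modulo $\C^\times$ and $\DD_1^\infty(N)$ has rank $\phi(N)/2-1$, the lattice $\Lambda$ has the same rank, and it remains to show $\Lambda=\Lambda'$. I would deduce this from two properties of $\pi|_\Lambda$: (i) it is injective, and (ii) its image lies in the sum-zero sublattice $A=\pi(\Lambda')$. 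Granting these, $A=\pi(\Lambda')\subseteq\pi(\Lambda)\subseteq A$ forces $\pi(\Lambda)=\pi(\Lambda')$, and injectivity then yields $\Lambda=\Lambda'$.

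Both (i) and (ii) I would prove by sieving the orbit conditions, and this is where I expect the real work to lie. For (ii), set $S_d=\sum_{\gcd(h,N)=d}e_h$; summing the orbit conditions for $p$ over all residues $a$ with $\gcd(a,N/p)=m$ gives $S_m+S_{mp}=0$, whence $S_d=\mu(d)S_1$ for every $d\mid N$. Since no nonzero index has $\gcd(h,N)=N$ we have $S_N=0$, while the recursion forces $S_N=\mu(N)S_1$; as $\mu(N)\neq0$ this gives $S_1=\sum_{g\in S}e_g=0$, which is exactly $\pi(\Lambda)\subseteq A$. For (i), assuming all unit coordinates vanish I would show $e_h=0$ by induction on $\omega(\gcd(h,N))$: choosing a prime $q\mid\gcd(h,N)$, the orbit $\O_{h\bmod N/q,\,q}$ contains exactly one index of gcd $\gcd(h,N)$, namely $h$ itself, all others having gcd $\gcd(h,N)/q$ and hence vanishing exponent by the inductive hypothesis (the base case being the units). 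The main obstacle is thus the combinatorics of the orbit conditions---tracking which orbit each index $g(k)$ falls into and organizing these two sieve/induction arguments---whereas membership and independence are comparatively formal once $\pi(v_g)=\delta_g$ is observed.
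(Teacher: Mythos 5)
Your proof is correct, and its first half --- membership of $F_{g}/F_{g'}$ in $\FF_1^\infty(N)$ via the pairing $k\leftrightarrow pk$ of proper divisors inside common $p$-orbits, with the unpaired $k=N/p$ leaving a $g$-independent defect in $\O_{0,p}$ --- is exactly the paper's argument. The generation step, however, is organized differently. The paper solves the orbit conditions explicitly: by induction on the number of prime factors of $\gcd(g,N)$ it expresses each non-unit exponent as $e_g=\mu(\gcd(g,N))\sum_{h\equiv\pm g\Mod N/(g,N),\ (h,N)=1}e_h$, and then regroups $\prod_g E_g^{e_g}$ term by term into $\prod_{(h,N)=1}F_h^{e_h}$. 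You instead work with the lattice $\Lambda$ cut out by the orbit conditions and show that the projection $\pi$ onto the unit coordinates is injective on $\Lambda$ (your induction on $\omega(\gcd(h,N))$ is precisely the homogeneous form of the paper's formula) and has image the sum-zero lattice (your M\"obius sieve $S_d=\mu(d)S_1$ together with $S_{N/p}=0$). Your route makes explicit two points the paper leaves tacit: the relation $\sum_{(h,N)=1}e_h=0$, which is needed to convert ``product of the $F_h$'' into ``product of the consecutive quotients $F_{g_i}/F_{g_{i+1}}$'' and which the paper only exhibits in its $N=21$ example; and the $\Z$-linear independence of the proposed generators, i.e.\ that they form a basis and not merely a generating set. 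What you give up is the explicit expression of a given $f\in\FF_1^\infty(N)$ in terms of the basis, which falls out of the paper's regrouping identity. (One small point common to both arguments: each tacitly uses that the $E_g$ are multiplicatively independent modulo constants, so that exponent vectors determine functions.)
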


\begin{proof}
%
  We first show that any quotient $F_{g_i}/F_{g_j}$ of two functions
  $F_g$ satisfies the orbit condition \eqref{condition: DO}, which by
  Theorem B implies that $F_{g_i}/F_{g_j}$ is in $\FF_1^\infty(N)$.

  Let $p$ be a prime divisor of $N$ and $g$ be an integer relatively
  prime to $N$. Since $N$ is squarefree, we may write $F_g$ as
  $$
    F_g=\prod_{p\nmid k}E_{g(k)}^{\mu(k)}
         \prod_{p|k,k\neq N}E_{g(k)}^{\mu(k)}
    =E_{g(N/p)}^{\mu(N/p)}\prod_{p\nmid k,k\neq N/p}
      E_{g(k)}^{\mu(k)}E_{g(pk)}^{-\mu(k)}.
  $$
  For divisors $k$ of $N$ that are not divisible by $p$, the integers
  $g(k)$ and $g(pk)$ satisfy
  $$
    \begin{cases} g(k)\equiv 0 &\mod k, \\
      g(k)\equiv\pm g&\mod N/k, \end{cases} \qquad
    \begin{cases} g(pk)\equiv 0&\mod pk, \\
      g(pk)\equiv\pm g &\mod N/(pk).\end{cases}
  $$
  Combining these congruences, we find
  $$
    g(k)\equiv\pm g(pk)\Mod N/p.
  $$
  Therefore, $F_{g_i}/F_{g_j}$ satisfies \eqref{condition: DO}.
  We now show that $\FF_1^\infty(N)$ is generated by
  $F_{g_i}/F_{g_{i+1}}$, $i=1,\ldots,\phi(N)/2-1$. It suffices to
  prove that every function in $\FF_1^\infty(N)$ is a product of
  $F_g$.

  By Theorem B, every function in $\FF_1^\infty(N)$ is of the form
  $f(\tau)=\prod_g E_g^{e_g}$ up to a scalar, where $e_g$ satisfy
  $$
    \sum_{g\in\O_{a,p}}e_g=0
  $$
  for each prime divisor $p$ of $N$ and each integer $a$. Now let $p$
  be a prime divisor of $N$ and $g$ be an integer in the range $1\le
  g\le N/2$ satisfying $(g,N)=p$. Consider the set
  $$
   T=\O_{g,p}=\{1\le h\le N/2:~h\equiv\pm g\mod N/p\}.
  $$
  Except for $g$ itself, all elements of $T$ are relatively prime to
  $N$. It follows that for $g$ with $(g,N)=p$,
  \begin{equation} \label{equation: theorem 4 1}
    e_{g}=-\sum_{h\equiv\pm g\Mod N/p,~(h,N)=1}e_h,
  \end{equation}
  where $h$ runs over all integers in the range $1\le h\le N/2$
  satisfying the stated conditions.

  Likewise, if $p_1$ and $p_2$ are two distinct prime divisors of $N$,
  then for all $g$ with $1\le g\le N/2$ and $(g,N)=p_1p_2$, the set
  $$
    T=\{1\le h\le N/2:~h\equiv\pm g\mod N/(p_1p_2)\}
  $$
  can be partitioned into a union of $4$ disjoint subsets
  $$
    T=T_1\cup T_{p_1}\cup T_{p_2}\cup T_{p_1p_2},
  $$
  where
  $$
    T_k=\{h\in T:~(h,N)=k\}.
  $$
  Then condition \eqref{condition: DO} yields
  $$
    \sum_{h\in T_1}e_h+\sum_{h\in T_{p_1}}e_h+\sum_{h\in T_{p_2}}e_h
   +\sum_{h\in T_{p_1p_2}}e_h=\sum_{h\in T}e_h=0.
  $$
  Now the set $T_{p_1p_2}$ consists of $g$ itself.
  Furthermore, by \eqref{equation: theorem 4 1}, we have for all $h\in
  T_{p_i}$, $i=1,2$,
  $$
    e_h=-\sum_{\ell\equiv\pm h\Mod N/p_i,~(\ell,N)=1}e_\ell.
  $$
  It follows that
  $$
    \sum_{h\in T_{p_i}}e_h=-\sum_{\ell\in T_1}e_\ell
  $$
  since for each $\ell\in T_1$ there exists exactly one element $h\in
  T_{p_i}$ such that $h\equiv\pm\ell\mod N/p_i$. Summarizing, we find,
  for all $g$ with $(g,N)=p_1p_2$.
  $$
    e_g=\sum_{h\in T_1}e_h=
    \sum_{h\equiv\pm g\Mod N/p_1p_2,~(h,N)=1}e_h.
  $$

  In general, following the same argument, we can prove by induction
  that if $p_1,\ldots,p_k$ are distinct prime factors of $N$, then for
  all $g$ with $1\le g\le N/2$, we have
  \begin{equation} \label{equation: theorem 4 dependence}
    e_g=(-1)^{\mu(\gcd(g,N))}\sum_{h\equiv\pm g\Mod N/(g,N),~(h,N)=1}e_h,
  \end{equation}
  where the summation runs over all integers $h$ satisfying $1\le
  h\le N/2$ and the stated conditions. From this we see that
  \begin{equation} \label{equation: theorem 4 dependence 2}
  \begin{split}
  f(\tau)&=\prod_g E_g^{e_g}
      =\prod_{d|N,~d\neq N}\prod_{(g,N)=d}E_g^{e_g}
   =\prod_{d|N,~d\neq N}\prod_{(g,N)=d}
    \prod_{\substack{h\equiv\pm g\Mod N/d \\(h,N)=1}}E_g^{\mu(d)e_h} \\
  &=\prod_{(h,N)=1}\prod_{d|N,~d\neq N}E_{h(d)}^{\mu(d)e_h}
   =\prod_{(h,N)=1}F_h^{e_h}.
  \end{split}
  \end{equation}
  This completes the proof of the theorem.
\end{proof}

\noindent{\bf Example.} Consider $N=42$. Following
  Theorem \ref{theorem: squarefree}, we set
  \begin{equation*}
  \begin{split}
    F_1&=\frac{E_1E_6E_{14}E_{21}}{E_{20}E_{15}E_7}, \quad
    F_5=\frac{E_5E_{12}E_{14}E_{21}}{E_{16}E_9E_7}, \quad
    F_{11}=\frac{E_{11}E_{18}E_{14}E_{21}}{E_{10}E_3E_7}, \\
    F_{13}&=\frac{E_{13}E_6E_{14}E_{21}}{E_8E_{15}E_7}, \quad
    F_{17}=\frac{E_{17}E_{18}E_{14}E_{21}}{E_4E_3E_7}, \quad
    F_{19}=\frac{E_{19}E_{12}E_{14}E_{21}}{E_2E_9E_7}.
  \end{split}
  \end{equation*}
  According to Theorem \ref{theorem: squarefree}, the group
  $\FF_1^\infty(42)$, up to scalars, is generated by
  $$
    f_1=\frac{F_1}{F_5}, \ f_2=\frac{F_5}{F_{11}}, \
    f_3=\frac{F_{11}}{F_{13}}, \ f_4=\frac{F_{13}}{F_{17}}, \
    f_5=\frac{F_{17}}{F_{19}}.
  $$
  To check the correctness, we form a $6\times 6$ matrix
  $$
    M=\begin{pmatrix}5&9&-5&6&-14&-1\\6&-8&-1&2&12&-11\\
    5&-6&9&-14&-1&5\\ 8&-12&-2&11&-6&1\\
    -2&11&-12&-6&1&8\\1&1&1&1&1&1\end{pmatrix}
  $$
  whose first $5$ rows are the orders of $f_i$ at $1/42$, $5/42$,
  $11/42$, $13/42$, $17/42$, and $19/42$, respectively. We have
  $\det(M)/6=248430$, which agrees with the class number obtained from
  Yu's formula.

  To determine the group structure of generators of the divisor class
  group, we remove the last row of $M$ and put it in the Hermite
  normal form. Explicitly, we have
  $$
    \begin{pmatrix}-4&-53&37&71&22\\0&6&-2&-6&-1\\
    -3&-33&25&46&15\\6&46&-42&-71&-26\\
    11&142&-99&-190&-59\end{pmatrix}M
   =\begin{pmatrix}1&0&0&1&-1021&1019\\
    0&1&0&-20&109&-90\\0&0&1&-18&-640&657\\
    0&0&0&91&910&-1001\\0&0&0&0&2730&-2730\end{pmatrix},
  $$
  where the $5\times 5$ matrix on the left is a unimodular matrix.
  (We have removed the last row of $M$.) From this we see that the
  divisor class group is isomorphic to $C_{2730}\times C_{91}$, whose
  components are generated by the classes of
  $$
    (17/42)-(19/42), \qquad(13/42)+10(17/42)-11(19/42),
  $$
  respectively.
\medskip

\noindent{\bf Remark.} We remark that one can actually use Theorem
  \ref{theorem: squarefree} to give another proof of Theorem A for
  squarefree integers $N$. The key is the distribution relation
  \eqref{equation: Bernoulli relation} of Lemma \ref{lemma: Bernoulli
  relation}. For example, let us consider the case $N=21$.

  Let $b_{g,a}$ denote the numbers $21B_2(ag/21)/2$, which is the
  order of $E_g$ at $a/21$ when $(a,21)=1$. By Lemma
  \ref{lemma: Bernoulli relation}, we have
  \begin{equation*}
  \begin{split}
    3(b_{1,a}+b_{8,a}+b_{6,a})&=b_{3,a}, \\
    3(b_{2,a}+b_{9,a}+b_{5,a})&=b_{6,a}, \\
    3(b_{4,a}+b_{10,a}+b_{3,a})&=b_{9,a},
  \end{split}
  \end{equation*}
  and
  $$
    7(b_{1,a}+b_{4,a}+b_{7,a}+b_{10,a}+b_{8,a}+b_{5,a}+b_{2,a})=b_{7,a}
  $$
  for all integers $a$. From these relations, we obtain
  $$
    \begin{pmatrix}b_{6,a}\\ b_{9,a}\\b_{3,a}\end{pmatrix}
  =-\frac1{3^3-1}\begin{pmatrix}27&3&9\\9&27&3\\3&9&27\end{pmatrix}
    \begin{pmatrix} b_{1,a}+b_{8,a} \\ b_{2,a}+b_{5,a} \\
    b_{4,a}+b_{10,a}\end{pmatrix},
  $$
  and
  $$
    b_{7,a}=-\frac7{7-1}(b_{1,a}+b_{2,a}+b_{4,a}+b_{5,a}
   +b_{8,a}+b_{10,a}).
  $$
  Set
  $$
    F_1=\frac{E_1}{E_6E_7},~F_2=\frac{E_2}{E_9E_7},
   ~F_4=\frac{E_4}{E_3E_7},~F_5=\frac{E_5}{E_9E_7},
   ~F_8=\frac{E_8}{E_6E_7},~F_{10}=\frac{E_{10}}{E_3E_7}.
  $$
  Now let $M$ be the $6\times 6$ matrix whose $(i,j)$-entry is the
  order $21B_2(2^{i+j-2}/21)/2$ of $E_{2^{i-1}}$ at the cusp
  $2^{j-1}/21$. Then the orders of $F_1,F_2,F_4,F_8,F_5,F_{10}$ at the
  cusps $2^{j-1}/21$ will be
  $$
    (V_1-V_3-V_7)M,
  $$
  where $V_1$ is the identity matrix of size $6$,
  $$
    V_3=-\frac1{3^3-1}\begin{pmatrix} W_3&W_3\\ W_3&W_3\end{pmatrix},
    \qquad
    W_3=\begin{pmatrix}27&3&9\\9&27&3\\3&9&27\end{pmatrix},
  $$
  and $V_7$ is the $6\times 6$ matrix whose entries are all $-7/6$.
  Then the orders of the generators $F_1/F_2$, $F_2/F_4$, $F_4/F_8$,
  $F_8/F_5$, $F_5/F_{10}$ will make up the first $5$ rows of
  $$
   Q=\begin{pmatrix}1&-1&0&0&0&0\\0&1&-1&0&0&0\\0&0&1&-1&0&0\\
    0&0&0&1&-1&0\\0&0&0&0&1&-1\\0&0&0&0&0&1\end{pmatrix}
   (V_1-V_3)(V_1-V_7)M.
  $$
  (Note that $V_3V_7M$ has the same number $7/2$ in every entry. Thus,
  the term $V_3V_7M$ does not contribute anything to the first $5$
  rows of $Q$.) By a direct computation and Lemma \ref{lemma:
  Bernoulli matrix}, the determinant of $Q$ is equal to
  $$
    \det Q=\cdot(1+3^3)\cdot(1+7)\cdot\prod_{\chi}\frac14B_{2,\chi},
  $$
  where $\chi$ runs over all even Dirichlet characters modulo $21$.
  We then check that the sum of the entries in the last row of $Q$ is
  $$
    16=\frac12(1+3)(1+7)=\frac14B_{2,\chi_0}(1+3)(1+7).
  $$
  Thus, by Lemma \ref{lemma: lattice index}, the class number
  $h_1^\infty(21)$ is equal to
  $$
    \frac1{16}\det Q=\frac{1+3^3}{1+3}\cdot\frac{1+7}{1+7}\cdot
    \prod_{\chi\neq\chi_0}\frac14B_{2,\chi},
  $$
  which is \eqref{equation: Yu formula 2} for $N=21$.

  In general, if $N=p_1\ldots p_n$ is a squarefree integer with $n\ge
  2$, we may deduce Yu's formula using the same argument as above.
  Let $a_1,\ldots,a_{\phi(N)/2}$ be the integers in the range $1\le
  a_i\le N/2$ that are relatively prime to $N$. Let $M$ be the matrix
  whose $(i,j)$-entry is $NB_2(a_ia_j/N)/2$ so that the $i$th row of
  $M$ encodes the order of $E_{a_i}$ at $a_j/N$. Now for each divisor
  $k$ of $N$, using the distribution relation \eqref{equation:
  Bernoulli relation}, we can record the order of $E_{a_i(k)}$ at
  $a_j/N$ in a matrix of the form $V_kM$. Then the order of $F_{a_i}$
  at $a_j/N$ will be the $(i,j)$-entry of
  $$
    \left(\sum_{k|N,~k\neq N}\mu(k)V_k\right)M
  $$
  We then show that the matrices $V_k$ satisfy
  $$
    V_{k_1}V_{k_2}=V_{k_2}V_{k_1}
  $$
  for divisors $k_1$ and $k_2$ of $N$ that are relatively prime and
  $$
    \det(V_1-V_{p_i})=(1+f_{p_i})^{e_{p_i}},
  $$
  where $f_{p_i}$ and $e_{p_i}$ are defined as in Theorem A'. Then
  following the argument in the special case $N=21$ above, one can
  deduce Theorem A' for composite squarefree integers $N$.
\end{subsection}

\begin{subsection}{Remaining cases} \label{subsection: nonsquarefree}
In this section we give a basis $\GG$ for $\FF_1^\infty(N)$ for
non-squarefree integers $N$ that are not prime powers.

Let $L=\prod_{p|N}p$ be the product of distinct prime divisors of $N$.
For each divisor $M$ of $N$ that is a multiple of $L$, we will
construct a set $\GG^{(M)}$ of modular functions in $\FF_1^\infty(M)$
so that the union
$$
  \bigcup_{M:~M|N,~L|M}\{f(N\tau/M):~f(\tau)\in\GG^{(M)}\}
$$
forms a basis for $\FF_1^\infty(N)$ modulo scalars. The definition of
$\GG^{(M)}$ is given as follows.

When $M=L$ is squarefree, we define the set $\GG^{(L)}$ to be the
basis for $\FF_1^\infty(L)$ given in Theorem \ref{theorem:
squarefree}. When $M\neq L$, $M$ is non-squarefree. Let
$p_1,\ldots,p_\ell$ be the prime divisors of $M$ such that $p_i^2|M$,
$i=1,\ldots,\ell$, and let $p_{\ell+1},\ldots,p_k$ be the other prime
factors of $M$. Set
$$
  K=\prod_{i=\ell+1}^k p_i.
$$
($K=1$ if $M$ is squarefull.) For an integer $g$ relatively prime to
$M$ and a divisor $k$ of $K$, we let $g(k)$ be the unique integer
satisfying
$$
  \begin{cases}g(k)\equiv 0,&\mod k, \\
  g(k)\equiv\pm g, &\mod M/k, \end{cases}
$$
in the range $1\le g(k)\le M/2$, and set
\begin{equation} \label{equation: theorem 5 F}
  F_g^{(M)}(\tau)=\prod_{k|K}E_{g(k)}^{(M)}(\tau)^{\mu(k)}.
\end{equation}

For each integer $g$ in the range $1\le g<M/(2\prod_{i=1}^\ell p_i)$
satisfying $(g,M)=1$ and each integer $m_i$ with $1\le m_i\le p_i-1$,
$i=1,\ldots,\ell$, we set
\begin{equation} \label{equation: theorem 5 G}
  G_{g,m_1,\ldots,m_\ell}^{(M)}(\tau)=\prod_{(n_1,\ldots,n_\ell)\in
  \{0,1\}^\ell}F_{g+n_1m_1M/p_1+\cdots+n_\ell m_\ell M/p_\ell}^{(M)}
  (\tau)^{(-1)^{n_1+\cdots+n_\ell}}.
\end{equation}
Then we define the set $\GG^{(M)}$ to be the set of all such functions
$$
  \GG^{(M)}=\left\{G^{(M)}_{g,m_1,\ldots,m_\ell}(\tau):\,
  1\le g\le\frac{M}{2\prod_{i=1}^\ell p_i},\,(g,L)=1,\,1\le m_i\le
  p_i-1 \right\}.
$$
\medskip

\noindent{\bf Example.} Let $M=180=2^2\cdot3^2\cdot5$. We have
  $p_1=2$, $p_2=3$, $p_3=5$, and $K=5$. Then $\GG^{(M)}$ consists of
  $8$ functions $G_{1,1,m}^{(M)}$, $G_{7,1,m}^{(M)}$,
  $G_{11,1,m}^{(M)}$, and $G_{13,1,m}^{(M)}$ with $m=1,2$. Among them,
  the function $G_{1,1,2}^{(M)}$ is defined as
  $$
    G_{1,1,2}^{(M)}=\frac{F_1^{(M)}F_{1+M/2+2M/3}^{(M)}}
    {F^{(M)}_{1+M/2}F^{(M)}_{1+2M/3}}
   =\frac{F_1^{(M)}F_{31}^{(M)}}{F_{91}^{(M)}F_{121}^{(M)}},
  $$
  where $F_1^{(M)}=E_1/E_{35}$, $F^{(M)}_{31}=E_{31}/E_5$,
  $F^{(M)}_{91}=E_{91}/E_{55}$, and
  $F_{121}^{(M)}=E_{59}/E_{85}$. Thus, we have
  $$
    G_{1,1,2}^{(M)}=\frac{E_1E_{31}E_{55}E_{85}}{E_{91}E_{59}E_{35}E_5}.
  $$
\medskip

The definition of $\GG^{(M)}$ stems from the following observations.

\begin{Lemma} \label{lemma: origin 1} Let the notations $K$, $L$,
  $M$, $p_1,\ldots,p_\ell$, and $p_{\ell+1},\ldots,p_k$ be given as
  above. For any $g$ relatively prime to $M$ and any $j$ with $1\le
  j\le\ell$, the elements in the orbit
  $$
    \O_{g,p_1\ldots p_j}=\{h\mod M:~g\equiv h\mod M/(p_1\ldots p_j)\}
    /\pm 1
  $$
  can be uniquely represented by
  \begin{equation} \label{equation: origin 1}
    g+m_1\frac M{p_1}+\cdots+m_j\frac M{p_j}, \quad 0\le m_i\le p_i-1.
  \end{equation}
\end{Lemma}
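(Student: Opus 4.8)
The plan is to identify $\O_{g,p_1\cdots p_j}$ with the image in $(\Z/M\Z)/\pm1$ of a single coset of a cyclic subgroup of $\Z/M\Z$, to parametrize that subgroup by the tuples $(m_1,\dots,m_j)$, and then to check that passing to the quotient by $\pm1$ collapses nothing. Write $K_j=p_1\cdots p_j$ and let $H=(M/K_j)\Z/M\Z$ be the cyclic subgroup of $\Z/M\Z$ generated by $M/K_j$; it has order $K_j$. Since the residues $h\bmod M$ with $h\equiv g\pmod{M/K_j}$ are exactly the coset $g+H$, the orbit $\O_{g,p_1\cdots p_j}$ is by definition the image of $g+H$ in $(\Z/M\Z)/\pm1$. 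Thus it suffices to show that the $K_j$ numbers $g+m_1 M/p_1+\cdots+m_j M/p_j$ with $0\le m_i\le p_i-1$ run over $g+H$ without repetition, and then that these remain pairwise distinct modulo $\pm1$.

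For the parametrization I would argue as follows. Each $M/p_i$ lies in $H$—it is the multiple $(K_j/p_i)(M/K_j)$ of the generator—and has additive order exactly $p_i$ in $\Z/M\Z$. Because $p_1,\dots,p_j$ are distinct primes, the cyclic subgroups $\langle M/p_1\rangle,\dots,\langle M/p_j\rangle$ have pairwise coprime orders, so the subgroup they generate is their internal direct sum, of order $p_1\cdots p_j=K_j=|H|$; hence this sum is all of $H$ and each element of $H$ has a unique expression $\sum_{i=1}^j m_i(M/p_i)$ with $0\le m_i\le p_i-1$. Translating by $g$ then shows that the listed numbers exhaust the coset $g+H$ exactly once. (Equivalently, one may invoke the Chinese Remainder isomorphism $\Z/K_j\Z\cong\prod_i\Z/p_i\Z$, under which $M/p_i$ corresponds to a unit in the $i$th factor and to $0$ elsewhere.)

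The remaining, and I expect the only delicate, point is injectivity after reduction modulo $\pm1$. Suppose two of the representatives were negatives of one another, say $g+\sum_i m_i(M/p_i)\equiv-\bigl(g+\sum_i m_i'(M/p_i)\bigr)\pmod M$. Reducing this congruence modulo $M/K_j$ annihilates every term $M/p_i$, since each is a multiple of $M/K_j$, and leaves $2g\equiv0\pmod{M/K_j}$. As $\gcd(g,M)=1$, this forces $M/K_j\mid 2$. However, for each $i\le j\le\ell$ we have $p_i^2\mid M$, so $p_i\mid M/K_j$ for all such $i$ and therefore $K_j\mid M/K_j$; since $M$ is divisible by $L$ and hence is not a prime power, $M/K_j\neq2$, so $M/K_j\ge3$, contradicting $M/K_j\mid 2$. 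Therefore no collapse occurs, the map $(m_1,\dots,m_j)\mapsto g+\sum_i m_i(M/p_i)$ descends to a bijection onto $\O_{g,p_1\cdots p_j}$, and the claimed unique representation follows. The crux of the whole argument is precisely this last verification that $M/K_j>2$, where the hypotheses $p_i^2\mid M$ and $M$ being a non-prime-power level are exactly what is needed.
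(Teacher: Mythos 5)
Your proof is correct and follows essentially the same route as the paper's: you parametrize the coset $g+H$ of size $p_1\cdots p_j$ by the tuples $(m_1,\ldots,m_j)$ via the Chinese Remainder Theorem, and you rule out any identification under $\pm1$ by reducing $g\equiv -g-kM/(p_1\cdots p_j)$ modulo $M/(p_1\cdots p_j)$ to get the impossible congruence $2g\equiv 0$, exactly as in the paper. Your justification that $M/(p_1\cdots p_j)>2$ (using $p_i^2\mid M$ for $i\le j$ and the fact that $M$ is not a prime power) is in fact slightly more careful than the paper's one-line bound, but the substance is identical.
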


\begin{proof} Write $P=p_1\ldots p_j$. We first show that
  $|\O_{g,P}|=P$, that is, all numbers $g+kM/P$ are distinct 
  under the identification $\Z/M\Z\to(\Z/M\Z)/\pm 1$. Suppose that
  $g\equiv-g-kM/P\mod M$ for some $k$. Then we have $2g\equiv
  0\mod M/P$, which is impossible since $M/P\ge p_1\ldots p_\ell\ge
  6$ and $g$ is assumed to be relatively prime to $M$.

  Now it is obvious that every number in \eqref{equation:
  origin 1} is congruent to $g$ modulo $M/P$, and
  the number of elements in \eqref{equation: origin 1} is the same as
  $|\O_{g,P}|=P$. Thus, we only need to show that two different
  elements in \eqref{equation: origin 1} can not be congruent to each
  other modulo $M/P$. This can be achieved by considering the
  reduction modulo $p_i^{n_i-1}$ for various $p_i$, where $p_i^{n_i}$
  denotes the exact power of $p$ dividing $M$.
\end{proof}

\begin{Lemma} \label{lemma: origin} Let all the notations be given as
  above. Assume that $\prod_g E_g^{e_g}\in\FF_1^\infty(M)$. Then, for
  any integer $g$ relatively prime to $M$ and any integer $j$ with
  $1\le j\le \ell$, we have the relation
  \begin{equation} \label{equation: lemma origin}
    e_g =\sum_{P|p_1\ldots p_j}\mu(P)\sum_{h\in\O_{g,P}}e_h
    =(-1)^j\sum_{m_1=1}^{p_1-1}\cdots\sum_{m_j=1}^{p_j-1}
    e_{g+m_1M/p_1+\cdots+m_jM/p_j}
  \end{equation}
  for $e_g$.
\end{Lemma}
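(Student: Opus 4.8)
The plan is to extract both equalities from the single orbit relation that Theorem B attaches to $\prod_gE_g^{e_g}\in\FF_1^\infty(M)$, iterated over $p_1,\dots,p_j$; the one substantive point is that every index occurring in the iteration stays coprime to $M$. First I would record the elementary fact that makes this work: for $i\le\ell$ we have $p_i^2\mid M$, so $M/p_i$ is divisible by $p_i$, and $M/p_i$ is of course divisible by every prime power $q^{n_q}$ with $q\neq p_i$ dividing $M$. Consequently, adding any multiple of $M/p_i$ (for $i\le j\le\ell$) changes the residue of an integer modulo no prime dividing $M$; starting from $g$ with $(g,M)=1$, every index $g+m_1M/p_1+\cdots+m_jM/p_j$ is therefore again coprime to $M$. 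Combined with Lemma \ref{lemma: origin 1}, which shows these $p_1\cdots p_j$ indices are pairwise distinct modulo $\pm1$, this forces the orbit $\O_{a,p_i}$ of any such index $a$ to have exactly $p_i$ elements, so Theorem B's relation $\sum_{h\in\O_{a,p_i}}e_h=0$ becomes
\[
  e_a=-\sum_{m=1}^{p_i-1}e_{a+mM/p_i}.
\]

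Next I would iterate this single-prime reduction. Applying it at $p_1$ to $e_g$, then at $p_2$ to each term $e_{g+m_1M/p_1}$ (permissible because that index is again a unit modulo $M$), and continuing through $p_j$, yields
\[
  e_g=(-1)^j\sum_{m_1=1}^{p_1-1}\cdots\sum_{m_j=1}^{p_j-1}
  e_{g+m_1M/p_1+\cdots+m_jM/p_j},
\]
the second equality of the lemma; distinctness of the indices (again Lemma \ref{lemma: origin 1}) guarantees that each application is to a genuine orbit of full size, with no term counted twice.

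Finally I would match this product form with the M\"obius form by a purely combinatorial inclusion--exclusion that uses no property of the $e_h$. Writing $P=\prod_{i\in A}p_i$ for subsets $A\subseteq\{1,\dots,j\}$, so that $\mu(P)=(-1)^{|A|}$ and, by Lemma \ref{lemma: origin 1}, $\O_{g,P}=\{g+\sum_{i\in A}m_iM/p_i:0\le m_i\le p_i-1\}$, the sum $\sum_{P\mid p_1\cdots p_j}\mu(P)\sum_{h\in\O_{g,P}}e_h$ factors as a product over $i=1,\dots,j$ of the operation ``restrict to $m_i=0$, minus sum $m_i$ over $0,\dots,p_i-1$'' applied to $e_{g+\sum_im_iM/p_i}$. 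For each $i$ the two $m_i=0$ contributions cancel and leave the operation $-\sum_{m_i=1}^{p_i-1}$, and the product of these over all $i$ is precisely $(-1)^j\sum_{m_1=1}^{p_1-1}\cdots\sum_{m_j=1}^{p_j-1}$. Chaining the two computations gives $e_g$ equal to both displayed expressions.

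The step I expect to be the real content, as opposed to bookkeeping, is the coprimality invariance in the first paragraph. It is exactly the hypothesis $p_i^2\mid M$, i.e.\ $i\le\ell$, that keeps every intermediate index a unit modulo $M$ and every relevant orbit of full size $p_i$, which is what licenses repeated use of the single-prime reduction. This is also why the lemma is confined to $j\le\ell$: for a ``linear'' prime $p_i$ with $i>\ell$ the residue modulo $p_i$ does change when one adds $M/p_i$, the orbit then meets non-units of $\Z/M\Z$, and the clean reduction fails.
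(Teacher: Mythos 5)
Your proof is correct and takes essentially the same route as the paper's: both rest on the orbit condition from Theorem B, the full-orbit count from Lemma \ref{lemma: origin 1}, and an inclusion--exclusion identification of the two displayed sums. The only difference is organizational --- the paper first proves the M\"obius-sum expression by induction on $j$ and then extracts the product form by computing the coefficient of each $e_h$, whereas you iterate the solved single-prime relation $e_a=-\sum_{m=1}^{p_i-1}e_{a+mM/p_i}$ to reach the product form directly and then match it with the M\"obius sum formally; your explicit observation that every intermediate index stays coprime to $M$ (because $p_i^2\mid M$ for $i\le\ell$) usefully makes precise a point the paper leaves implicit in its appeal to Lemma \ref{lemma: origin 1}.
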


\begin{proof} We start by considering the first equality in
  \eqref{equation: lemma origin}. For the case $j=1$, it is an
  immediate consequence of the orbit condition \eqref{condition: DO}.
  We then proceed by induction.

  Assume that the first equality of \eqref{equation: lemma origin}
  holds up to $j$ with $j<\ell$, that is,
  $$
    e_g=\sum_{P|p_1\ldots p_j}\mu(P)\sum_{h\in\O_{g,P}}e_h.
  $$
  Then we have, by the induction hypothesis,
  $$
    e_g=\sum_{P|p_1\ldots p_j}\mu(P)\sum_{h\in\O_{g,P}}
    \left(\sum_{k\in\O_{h,1}}e_k-\sum_{k\in\O_{h,p_{j+1}}}e_k\right).
  $$
  The orbit $\O_{h,1}$ contains $h$ itself. Also, by Lemma \ref{lemma:
  origin 1} above, we have
  $$
    \sum_{h\in\O_{g,P}}\sum_{k\in\O_{h,p_{j+1}}}e_k
   =\sum_{h\in\O_{g,p_{j+1}}P}e_h.
  $$
  It follows that
  $$
    e_g=\sum_{P|p_1\ldots p_j}\mu(P)\left(
    \sum_{h\in\O_{g,P}}e_h-\sum_{h\in\O_{g,p_{j+1}P}}e_h\right)
   =\sum_{P|p_1\ldots p_{j+1}}\mu(P)\sum_{h\in\O_{g,P}}e_h.
  $$
  This proves the first equality of \eqref{equation: lemma origin}.
  We now prove the second equality.

  Regardless of what $P$ is, the elements $h$ in $\O_{g,P}$ always take
  the form
  $$
    h=g+m_1\frac M{p_1}+\cdots+m_j\frac M{p_j}
  $$
  for some $m_i$ with $0\le m_i<p_i-1$. In the other direction, such
  an element $h$ can appear in $\O_{g,P}$ if and only if for all $i$
  with $m_i\neq 0$, $p_i$ divides $P$. Thus, let $Q=\prod_{i:m_i\neq
  0}p_i$. Then the coefficient of $e_h$ in \eqref{equation: lemma
  origin} is equal to
  $$
    \sum_{P|p_1\ldots p_j,Q|P}\mu(P),
  $$
  which is $(-1)^j$ if $Q=p_1\ldots p_j$ and $0$ if $Q\neq p_1\ldots
  p_j$. This gives the second equality of \eqref{equation: lemma
  origin}.
\end{proof}


We now verify that $G_{g,m_1,\ldots,m_\ell}^{(M)}(\tau)$ are modular
functions contained in $\FF_1^\infty(M)$.

\begin{Lemma} \label{lemma: G} Let the notations be given as
  above. Then the functions $G_{g,m_1,\ldots,m_\ell}^{(M)}$ are
  modular functions contained in $\FF_1^\infty(M)$.
\end{Lemma}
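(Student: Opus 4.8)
The plan is to invoke Theorem B. Since $M$ is a multiple of $L=\prod_{p\mid N}p$ and $N$ is not a prime power, $M$ has at least two distinct prime divisors, so Theorem B applies and it suffices to show that, when $G_{g,m_1,\ldots,m_\ell}^{(M)}$ is expanded as a product $\prod_h E_h^{(M)\,e_h}$ of the functions $E_h^{(M)}$, the exponents satisfy the orbit condition $\sum_{h\in\O_{a,p}}e_h=0$ for every prime divisor $p$ of $M$ and every $a$. Both the $F_g^{(M)}$ and the $G_{g,m_1,\ldots,m_\ell}^{(M)}$ are by construction products of the $E_h^{(M)}$, so this is exactly the input Theorem B requires. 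First I would record the routine fact that each $g_{\vec n}:=g+\sum_i n_im_iM/p_i$ is coprime to $M$: because $p_j^2\mid M$ forces $p_j\mid M/p_i$ for every $i\le\ell$, and every prime $p\mid K$ divides each $M/p_i$ as well, one gets $g_{\vec n}\equiv g\pmod p$ for each prime $p\mid M$, so that the building blocks $F_{g_{\vec n}}^{(M)}$ are all defined.

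Next I would split the prime divisors of $M$ into those dividing $K$, namely $p_{\ell+1},\ldots,p_k$, and the squarefull ones $p_1,\ldots,p_\ell$, and treat the two types separately. For a prime $p\mid K$ I claim that \emph{each individual} factor $F_g^{(M)}=\prod_{k\mid K}E_{g(k)}^{\mu(k)}$ already satisfies the orbit condition at $p$. Pairing the divisors $k\mid K$ with $p\nmid k$ against $pk$ and using $\mu(pk)=-\mu(k)$, one rewrites $F_g^{(M)}=\prod_{k\mid K,\,p\nmid k}\bigl(E_{g(k)}/E_{g(pk)}\bigr)^{\mu(k)}$. The defining congruences give $g(k)\equiv g(pk)\equiv 0\pmod k$ and $g(k)\equiv\pm g\equiv\pm g(pk)\pmod{M/(pk)}$; since $k$ is squarefree with each prime occurring to the first power in $M$, the moduli $k$ and $M/(pk)$ are coprime with product $M/p$, so the Chinese Remainder Theorem yields $g(k)\equiv\pm g(pk)\pmod{M/p}$. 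Hence $E_{g(k)}$ and $E_{g(pk)}$ lie in one common orbit $\O_{\cdot,p}$ with opposite exponents, and their contributions to each orbit sum cancel. As this holds for each $F_g^{(M)}$, it holds for the product $G_{g,m_1,\ldots,m_\ell}^{(M)}$. This is precisely the argument already used in the proof of Theorem \ref{theorem: squarefree}.

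The remaining, genuinely new, case is a prime $p=p_j$ with $j\le\ell$, where a single $F_g^{(M)}$ need not pass the orbit test and the full alternating structure of $G$ is needed; this is the point that mirrors the inclusion–exclusion of Lemma \ref{lemma: origin}. Here I would group the index vectors $\vec n\in\{0,1\}^\ell$ into pairs obtained by flipping the $j$th coordinate. The two sign exponents $(-1)^{|\vec n|}$ are then opposite, so
$$
  G_{g,m_1,\ldots,m_\ell}^{(M)}
  =\prod_{\vec n:\,n_j=0}\bigl(F_{g_{\vec n}}^{(M)}/F_{g_{\vec n}+m_jM/p_j}^{(M)}\bigr)^{(-1)^{|\vec n|}}.
$$
It therefore suffices to show that each quotient $F_a^{(M)}/F_b^{(M)}$ with $a\equiv b\pmod{M/p_j}$ satisfies the orbit condition at $p_j$. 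For this I would verify, for every $k\mid K$, that $a(k)\equiv\pm b(k)\pmod{M/p_j}$: both are $\equiv 0\pmod k$, while modulo $M/(kp_j)$ one has $a(k)\equiv\pm a\equiv\pm b\equiv\pm b(k)$, using $a\equiv b\pmod{M/p_j}$ and $M/(kp_j)\mid M/p_j$; CRT across the coprime factorisation $M/p_j=k\cdot M/(kp_j)$ (the residue $0$ mod $k$ accommodating either sign) glues these into $a(k)\equiv\pm b(k)\pmod{M/p_j}$. Thus $E_{a(k)}^{\mu(k)}$ and $E_{b(k)}^{-\mu(k)}$ cancel within their common orbit, and $F_a/F_b$ passes the orbit test at $p_j$.

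Having verified the orbit condition at every prime divisor of $M$, Theorem B delivers $G_{g,m_1,\ldots,m_\ell}^{(M)}\in\FF_1^\infty(M)$. The main obstacle throughout is bookkeeping the $\pm1$ identification built into the orbits $\O_{\cdot,p}$ together with the CRT gluing of the mod-$k$ and mod-$(M/(kp))$ congruences; the one conceptual step, as opposed to routine computation, is recognising that for the squarefull primes $p_j$ it is the alternating bit-flip structure of $G$, and not any single $F_g^{(M)}$, that produces the cancellation.
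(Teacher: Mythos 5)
Your proof is correct and follows essentially the same route as the paper: verify the orbit condition of Theorem B prime by prime, handling the primes dividing $K$ by the pairing $k\leftrightarrow pk$ exactly as in the first paragraph of the proof of Theorem \ref{theorem: squarefree}, and handling the squarefull primes $p_j$ via the alternating bit-flip structure of $G^{(M)}_{g,m_1,\ldots,m_\ell}$. The paper dismisses the squarefull case as ``clear from our definition,'' so your CRT verification that $a(k)\equiv\pm b(k)\pmod{M/p_j}$ is simply a welcome filling-in of that step, not a different argument.
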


\begin{proof} The case when $M=L$ is squarefree is verified in Theorem
  \ref{theorem: squarefree}. For other $M$, using the argument given
  in the first paragraph of the proof of Theorem
  \ref{theorem: squarefree}, we see that for all prime factors $q$ of
  $M$ satisfying $q^2\nmid M$, the functions $F_g$ all satisfy
  condition \eqref{condition: DO}. For prime factors $p$ of $M$
  satisfying $p^2|M$, it is clear from our definition that
  $G_{g,m_1,\ldots,m_\ell}^{(M)}$ satisfy condition
  \eqref{condition: DO}. Therefore, by Theorem B, the functions
  $G_{g,m_1,\ldots,m_\ell}^{(M)}$ are all modular functions contained
  in $\FF_1^\infty(M)$.
\end{proof}

\begin{Corollary} \label{corollary: nonsquarefree} Let all the
  notations be given as above. Define
  \begin{equation} \label{equation: GG}
    \GG^{(M)}(d)=\{f(d\tau):~f(\tau)\in\GG^{(M)}\}.
  \end{equation}
  Then all the functions in $\GG^{(M)}(N/M)$ are modular functions
  belonging to $\FF_1^\infty(N)$.
\end{Corollary}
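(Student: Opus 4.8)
The plan is to establish the corollary as a direct consequence of the two preceding results, Lemma~\ref{lemma: G} and Lemma~\ref{lemma: level lowering 2}. By Lemma~\ref{lemma: G}, every function $f(\tau)\in\GG^{(M)}$ is a modular function contained in $\FF_1^\infty(M)$. Since $M$ is a multiple of $L=\prod_{p|N}p$, every prime divisor of $N$ already divides $M$; in particular $N=(N/M)M$ where every prime factor of the multiplier $N/M$ also divides $M$. The strategy is therefore to apply Lemma~\ref{lemma: level lowering 2} repeatedly, one prime at a time, to climb from level $M$ up to level $N$, at each stage preserving membership in the appropriate $\FF_1^\infty$ group.

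\medskip

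More precisely, I would factor the ascent $M \mid N$ into a chain of single-prime steps. Write $N/M=\prod_i q_i$ as a product of (not necessarily distinct) primes $q_i$, and set $M_0=M$, $M_j=q_jM_{j-1}$, so that $M_0=M$ and the final $M_r=N$. At each step $M_j=q_jM_{j-1}$, I note that $q_j$ is a prime divisor of $N$, hence $q_j\mid L\mid M\mid M_{j-1}$, so the hypothesis ``$p$ also divides $M$'' of the second part of Lemma~\ref{lemma: level lowering 2} is satisfied with $p=q_j$ and $M$ replaced by $M_{j-1}$. Applying that lemma to a function $h(\tau)\in\FF_1^\infty(M_{j-1})$ shows $h(q_j\tau)\in\FF_1^\infty(M_j)$. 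Iterating over $j=1,\ldots,r$, I conclude that for $f(\tau)\in\FF_1^\infty(M)$ the function $f\bigl((N/M)\tau\bigr)$ lies in $\FF_1^\infty(N)$. Combining with Lemma~\ref{lemma: G}, which places each $f\in\GG^{(M)}$ in $\FF_1^\infty(M)$, gives exactly the assertion that every element of $\GG^{(M)}(N/M)=\{f\bigl((N/M)\tau\bigr):f\in\GG^{(M)}\}$ belongs to $\FF_1^\infty(N)$.

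\medskip

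There is essentially no hard step here: the corollary is a bookkeeping combination of the two lemmas. The only point requiring a moment's care is verifying that the divisibility hypothesis of Lemma~\ref{lemma: level lowering 2} genuinely persists at every stage of the ascent, i.e.\ that each intermediate multiplier prime $q_j$ continues to divide the current lower level $M_{j-1}$. This is immediate because all the $q_j$ divide $L$, and $L\mid M\mid M_{j-1}$ throughout, so the ``$p\mid M$'' clause is never in danger of failing. I would also remark, for the reader's orientation, that the first half of Lemma~\ref{lemma: level lowering 2} already guarantees modularity on $\Gamma_1(N)$ without any divisibility assumption, so the entire content of the corollary is really the control of the location of the zeros and poles, which is precisely what the second half of that lemma supplies under the standing hypothesis $L\mid M$.
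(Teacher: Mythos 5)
Your proposal is correct and follows exactly the paper's route: the paper's own proof is the one-line observation that the corollary ``immediately follows from Lemmas \ref{lemma: level lowering 2} and \ref{lemma: G},'' and your write-up merely makes explicit the prime-by-prime iteration of Lemma \ref{lemma: level lowering 2} and the check that $q_j\mid L\mid M\mid M_{j-1}$ keeps its divisibility hypothesis valid at every stage. That added detail is a faithful unpacking, not a different argument.
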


\begin{proof} This immediately follows from Lemmas \ref{lemma: level
  lowering 2} and \ref{lemma: G}.
\end{proof}

Now we present our basis for $\FF_1^\infty(N)$.

\begin{Theorem} \label{theorem: nonsquarefree} Let all the notations
  be given as above. Then a basis for $\FF_1^\infty(N)$ modulo
  $\C^\times$ is
  $$
    \bigcup_{M:~M|N,~L|M}\GG^{(M)}(N/M),
  $$
  where $\GG^{(M)}(N/M)=\{f(N\tau/M):~f(\tau)\in\GG^{(M)}\}$.
\end{Theorem}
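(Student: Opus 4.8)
The plan is to prove two things: that every function in $\bigcup_{L\mid M\mid N}\GG^{(M)}(N/M)$ lies in $\FF_1^\infty(N)$, and that these functions form a $\Z$-basis of the free abelian group $\FF_1^\infty(N)/\C^\times$. The first is exactly the content of Corollary \ref{corollary: nonsquarefree}, so all the work is in the second. Since $\div$ is injective modulo scalars, $\FF_1^\infty(N)/\C^\times$ is free of rank $|C_1^\infty(N)|-1$; because a generating set of the correct cardinality in a free abelian group of finite rank is automatically a basis (a surjection $\Z^r\to\Z^r$ is an isomorphism), it suffices to show that the proposed functions \emph{generate} $\FF_1^\infty(N)$ modulo scalars and that their number is $|C_1^\infty(N)|-1$. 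The organizing idea is to stratify the building blocks $E_g^{(N)}$ by the level from which they descend: with $d=\gcd(g,N)$ and $M=N/d$, the trivial relation $E_g^{(N)}(\tau)=E_{g/d}^{(N/d)}(d\tau)$ shows $E_g$ ``lives at level $M$''. Each family $\GG^{(M)}(N/M)$ is block-triangular for this stratification, its leading part sitting in the stratum $\gcd=N/M$ and its M\"obius tail (the terms $k>1$ in $F^{(M)}_h$) lying in deeper strata, exactly as $F_g$ does in Theorem \ref{theorem: squarefree}.

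For generation I would induct on $N$ (over multiples of $L$), the base case $N=L$ being the composite squarefree Theorem \ref{theorem: squarefree}. Given $f=\prod_g E_g^{e_g}\in\FF_1^\infty(N)$, Theorem B says the exponents satisfy $\sum_{g\in\O_{a,p}}e_g=0$ for every prime $p\mid N$ and every $a$. I would first clear the top stratum of units $\gcd(g,N)=1$. Their exponents are constrained precisely by the relations of Lemma \ref{lemma: origin} coming from the primes $p$ with $p^2\mid N$, and Lemma \ref{lemma: origin 1} shows the orbits $\O_{g,p_1\cdots p_j}$ are parametrized bijectively by the tuples $(m_1,\dots,m_\ell)$. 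The functions $G^{(N)}_{g,m_1,\dots,m_\ell}$ are built so that their unit parts (only the $k=1$ term $E_h$ of each $F^{(N)}_h$ is a unit) realize an integral basis of the solution lattice of these relations. Hence a unique product of $\GG^{(N)}$-functions matches the unit exponents of $f$, and dividing yields $f'\in\FF_1^\infty(N)$ in which every surviving $e_g$ has $\gcd(g,N)>1$.

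The descent then handles $f'$ level by level, from the largest proper divisor $M$ of $N$ with $L\mid M$ down to $M=L$. Grouping the factors $E_g^{(N)}=E_{g/d}^{(N/d)}(d\tau)$ with $\gcd(g,N)=d$, I would use Lemma \ref{lemma: level lowering 2} together with the orbit conditions inherited from $f'$ to recognize the relevant aggregate as the pullback by $\tau\mapsto d\tau$ of a function in $\FF_1^\infty(N/d)$; by the inductive hypothesis $\GG^{(N/d)}$ is a basis of $\FF_1^\infty(N/d)$, so $\GG^{(N/d)}(d)$ accounts for this stratum. The distribution relation of Lemma \ref{lemma: Bernoulli relation} is the bookkeeping device reconciling the leading term of each new function with the deeper M\"obius tails already introduced, exactly as the inversion identity \eqref{equation: theorem 4 dependence} operated in the squarefree case. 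The orbit conditions at the primes $p$ where $d$ is \emph{saturated} (i.e.\ $p^{n_p}\mid d$) express the corresponding ``fully divisible'' exponents in terms of less divisible ones, which is why only the levels $M$ with $L\mid M$ contribute genuinely new generators.

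The step I expect to be the main obstacle is upgrading ``spanning'' to ``basis'': proving that at each level the relevant functions form an integral basis of \emph{index one}, with no redundancy and nothing missing, rather than merely generating. This forces the combinatorics of Lemmas \ref{lemma: origin 1} and \ref{lemma: origin} to an exact count — the number of admissible tuples $(g,m_1,\dots,m_\ell)$ at level $M$, summed over all $M$ with $L\mid M\mid N$, must reconcile precisely with the rank $|C_1^\infty(N)|-1$ — and it requires verifying the block-triangularity across strata, so that clearing a higher stratum perturbs the cleared ones only in a controlled, strictly-lower direction and never reintroduces a unit exponent. Establishing this exact cardinality-and-index-one match, as opposed to the comparatively routine containment and spanning, is the technically delicate heart of the argument.
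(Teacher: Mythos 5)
Your containment step and the overall shape of the argument (stratify the $E_g$ by the level they descend from, match the top stratum with $\GG^{(N)}$, push the rest down to lower levels) are in the spirit of the paper, but the proposal has a genuine gap at exactly the point you yourself flag as the ``technically delicate heart'': controlling how clearing one stratum perturbs the others. The paper dissolves this problem with a structural observation you are missing. Stratify not by $\gcd(g,N)$ but by $d=\gcd(g,N/L)$ for $d\mid N/L$: the set $S_d=\{g \bmod N:\ \gcd(g,N/L)=d\}/\pm1$ is \emph{stable} under every map $g\mapsto g+N/p$, because $N/p$ is always a multiple of $N/L$. Hence each orbit $\O_{a,p}$ lies entirely inside a single $S_d$, and Theorem B applies to each partial product separately: if $\prod_g E_g^{e_g}\in\FF_1^\infty(N)$, then $\prod_{g\in S_d}E_g^{e_g}\in\FF_1^\infty(N)$ for every $d$. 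So $f$ splits \emph{exactly} into stratum pieces, each of which is itself a modular unit, and all the $E$'s occurring in $\GG^{(N/d)}(d)$ lie entirely in the stratum $S_d$; there is no block-triangular perturbation to control at all. With your finer stratification by $\gcd(g,N)$, the M\"obius tails of the $F^{(M)}_h$ genuinely straddle strata, and you offer no mechanism for the bookkeeping you correctly identify as the obstacle.

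Second, after restricting to one stratum and descending to level $M=N/d$, what must be proved is that a unit of $\FF_1^\infty(M)$ supported on the $E_h$ with $(h,M/L)=1$ is a product of functions from $\GG^{(M)}$ \emph{alone}; your induction on $N$ would only yield that it is a product of generators drawn from all levels below $M$ as well, and you would still need to argue that the lower-level generators do not occur. The paper instead proves the restricted statement directly and non-inductively: Lemma \ref{lemma: origin} expresses each $e_h$ as a signed sum of the ``free'' exponents $e_k$ (those with $k=g+n_1M/p_1+\cdots+n_\ell M/p_\ell$ and all $n_i\neq 0$), and a M\"obius inversion over the divisors $P$ of $Q(h)$ reassembles $f$ as $\prod_g\prod_{n_i}G^{(M)}_{g,n_1,\ldots,n_\ell}(\tau)^{e_{g+n_1M/p_1+\cdots+n_\ell M/p_\ell}}$. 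This explicit identity is what substantiates both your unproved assertion that the unit parts of the $G$'s ``realize an integral basis of the solution lattice'' and your cardinality count, which you do not actually carry out for general $N$. As it stands, the proposal is a plausible outline that stalls precisely where the real work begins.
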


\noindent{\bf Example.} Consider the case $N=p^aq^b$, where $p$ and
$q$ are distinct primes and $a,b\ge 1$. Let us compute the number of
elements in $\GG^{(p^iq^j)}$ for $i,j\ge 1$.

When $2\le i\le a$ and $2\le j\le b$, the set
\begin{equation*}
\begin{split}
  \GG^{(p^iq^j)}=\{G_{g,m_1,m_2}^{(p^iq^j)}:
  &~1\le g\le p^{i-1}q^{j-1}/2,~(g,pq)=1, \\
  &\qquad1\le m_1\le p-1,~1\le m_2\le q-1\}
\end{split}
\end{equation*}
has
$$
  \frac12\phi(p^{i-1}q^{j-1})\cdot(p-1)\cdot(q-1)
 =\frac12p^{i-2}q^{j-2}(p-1)^2(q-1)^2
$$
elements. When $i=1$ and $2\le j\le b$, the set
$$
  \GG^{(pq^j)}=\{G_{g,m}^{(pq^j)}:~1\le g\le pq^{j-1}/2,
  ~(g,pq)=1,~1\le m\le q-1\}
$$
has $q^{j-2}(p-1)(q-1)^2/2$ elements. Likewise, when $2\le i\le a$ and
$j=1$, the set $\GG_{p^iq}$ has $p^{i-2}(p-1)^2(q-1)$ elements.
When $i=1$ and $j=1$, the set $\GG^{(pq)}$ has $(p-1)(q-1)/2-1$ elements.
Thus, the set
$$
  \bigcup_{M:~M|p^aq^b,~pq|M}\GG^{(M)}(p^aq^b/M)
$$
has totally
\begin{equation*}
\begin{split}
 &\frac12\sum_{i=2}^a\sum_{j=2}^bp^{i-2}q^{j-2}(p-1)^2(q-1)^2
 +\frac12\sum_{j=2}^b q^{j-2}(p-1)(q-1)^2 \\
 &\qquad\qquad+\frac12\sum_{i=2}^a
   p^{j-2}(p-1)^2(q-1)+\frac{(p-1)(q-1)}2-1 \\
 &\qquad=\frac12(p^{a-1}-1)(q^{b-1}-1)(p-1)(q-1)
 +\frac12(q^{b-1}-1)(p-1)(q-1) \\
 &\qquad\qquad+\frac12(p^{a-1}-1)(p-1)(q-1)+\frac12(p-1)(q-1)-1 \\
 &\qquad=\frac12p^{a-1}q^{b-1}(p-1)(q-1)-1,
\end{split}
\end{equation*}
which is the precisely the number of functions needed to generate
$\FF_1^\infty(p^aq^b)$.

\begin{proof}[Proof of Theorem \ref{theorem: nonsquarefree}] By
  Corollary \ref{corollary: nonsquarefree}, the functions in
  $\GG^{(M)}(N/M)$ are all contained in $\FF_1^\infty(N)$. To show
  that they generate $\FF_1^\infty(N)$, we recall Theorem B that
  $f(\tau)\in\FF_1^\infty(N)$ if and only if
  $$
    f(\tau)=\prod_g E_g(\tau)^{e_g}
  $$
  is a product of $E_g(\tau)$ with the exponents $e_g$
  satisfying the orbit condition \eqref{condition: DO}. We will prove
  that such a function can be expressed as a product of functions from
  $\cup_M\GG^{(M)}(N/M)$.

  Let $L=\prod_{p|N}p$ be the product of distinct prime divisors of
  $N$. We start out by observing that, for each divisor $d$ of $N/L$,
  the set
  $$
    S_d=\{g\ \Mod N:~\gcd(g,N/L)=d\}/\pm 1
  $$
  is stable under the map $g\mod N\mapsto g+N/p\mod N$ for all prime
  divisor $p$ of $N$ since $N/p$ is always a multiple of $N/L$. Thus,
  by Theorem B, if $f(\tau)=\prod E_g^{e_g}$ is a modular function in
  $\FF_1^\infty(N)$, then so is
  $$
    \prod_{g\in S_d}E_g^{e_g}
  $$
  for each divisor $d$ of $N/L$. Therefore, to prove the theorem, it
  suffices to consider the special case where
  $f(\tau)\in\FF_1^\infty(N)$ takes the form $\prod_{g\in
  S_d}E_g^{e_g}$. We claim that such a function can
  be expressed as a product of functions from $\GG^{(N/d)}(d)$.
  
  Assume that $\prod_{g\in S_d}E_g^{e_g}\in\FF_1^\infty(N)$. Then
  $e_g$ satisfy condition \eqref{condition: DO}
  $$
    \sum_{g\in\O_{a,p}}e_g=0
  $$
  for all $a$ with $\gcd(a,N/L)=d$ and all prime factors $p$
  of $N$. This condition can also be written as
  $$
    \sum_{g/d\equiv\pm b\mod (N/d)/p}e_g=0,
  $$
  for all $b$ satisfying $\gcd(b,N/(dL))=1$. Therefore,
  from Theorem B we deduce that
  $\prod_{g\in S_d}E_g^{(N)}(\tau)^{e_g}\in\FF_1^\infty(N)$ if and only if
  $\prod_{g\in S_d}E_{g/d}^{(N/d)}(\tau)^{e_g}\in\FF_1^\infty(N/d)$.
  Thus, the assertion that every function of the form $\prod_{g\in
  S_d}E_g^{(N)}(\tau)^{e_g}$ in $\FF_1^\infty(N)$ is generated by
  $\GG^{(N/d)}(d)$ is equivalent to the assertion that every function
  of the form
  $$
    \prod_{h:~(h,N/(dL))=1}E_h^{(N/d)}(\tau)^{e_h}
  $$
  in $\FF_1^\infty(N/d)$ is generated by functions from
  $\GG^{(N/d)}(1)=\GG^{(N/d)}$.

  Rehashing our problem, what we need to show now is the following.
  Let $M$ be a non-squarefree, non-prime power integer. Let
  $L=\prod_{p|M}p$ be the product of distinct prime divisors of $M$.
  Let $p_1,\ldots,p_\ell$ be the prime divisors of $M$ such that
  $p_i^2|M$ and $p_{\ell+1},\ldots,p_k$ be the remaining prime factors
  of $M$. Set
  $$
    K=\prod_{i=\ell+1}^k p_i.
  $$
  We are required to show that if a function $f(\tau)$ in
  $\FF_1^\infty(M)$ takes the form
  $$
    f(\tau)=\prod_{g:~(g,M/L)=1}E_g^{(M)}(\tau)^{e_g},
  $$
  then it is a product of functions from
  $$
    \GG^{(M)}=\left\{G^{(M)}_{g,m_1,\ldots,m_\ell}(\tau):\,
    1\le g\le\frac{M}{2\prod_{i=1}^\ell p_i},\,(g,L)=1,\,1\le m_i\le
    p_i-1 \right\},
  $$
  where $G^{(M)}_{g,m_1,\ldots,m_\ell}(\tau)$ are defined by
  \eqref{equation: theorem 5 F} and \eqref{equation: theorem 5 G}.

  First of all, following the deduction of \eqref{equation: theorem 4
  dependence} in the proof of Theorem \ref{theorem: squarefree}, we
  find that the exponents $e_g$ satisfy
  $$
    e_g=(-1)^{\mu(\gcd(g,K))}\sum_{h\equiv\pm g\Mod
    M/(g,K),~(h,M)=1}e_h,
  $$
  where $h$ runs over all integers in the range $1\le h\le M/2$
  satisfying the stated congruence condition. Then the argument in
  \eqref{equation: theorem 4 dependence 2} gives
  \begin{equation} \label{equation: theorem 5 proof 1}
    f(\tau)=\prod_{(g,M)=1}F_g^{(M)}(\tau)^{e_g},
  \end{equation}
  where $F_g^{(M)}$ is defined by \eqref{equation: theorem 5 F}.

  For convenience, we drop the superscript $(M)$ and write
  $F_g^{(M)}(\tau)$ as $F_g$. Partitioning the product in
  \eqref{equation: theorem 5 proof 1} according to the orbits
  $\O_{g,p_1\ldots p_\ell}$, we have
  $$
    f(\tau)=\prod_{\substack{1\le g\le M/(2p_1\ldots p_\ell) \\
    (g,M)=1}}\prod_{h\in\O_{g,p_1\ldots p_\ell}}
    F_h^{e_h}.
  $$
  By Lemma \ref{lemma: origin 1}, every element $h$ in
  $\O_{g,p_1\ldots p_\ell}$ can be uniquely represented as
  \begin{equation} \label{equation: theorem 5 proof 3}
    h=g+m_1\frac M{p_1}+\cdots+m_\ell\frac M{p_\ell}.
  \end{equation}
  For such an element, we define
  $$
    Q(h)=\prod_{i:~m_i=0}p_i.
  $$
  By Lemma \ref{lemma: origin}, we have
  $$
    e_h=\sum_{P|Q(h)}\mu(P)\sum_{k\in\O_{h,P}}e_k.
  $$
  Notice that the second equality in \eqref{equation: lemma origin}
  shows that the coefficient of $e_k$ on the right-hand side of the
  above expression is nonzero if and only if the numbers $n_i$ in
  \begin{equation} \label{equation: theorem 5 proof 2}
    k=g+n_1\frac M{p_1}+\cdots+n_\ell\frac M{p_\ell}
  \end{equation}
  are all nonzero. Thus, we may write $f(\tau)$ as
  $$
    f(\tau)=\prod_g\prod_k\left(\prod_{P|p_1\ldots p_\ell}
    \left(\prod_{h:P|Q(h),h\in\O_{k,P}}F_h\right)^{\mu(P)}\right)^{e_k},
  $$
  where $g$ runs over all integers satisfying $1\le g\le M/(2p_1\ldots
  p_\ell)$ and $(g,M)=1$ and $k$ runs over all numbers of the form
  \eqref{equation: theorem 5 proof 2} with $n_i\neq 0$ for all $i$.
  Now consider the product over $h$. An integer $h$ of the form
  \eqref{equation: theorem 5 proof 3} satisfies $P|Q(h)$ if and only
  if $p_i|P$ implies $m_i=0$. Also, $h\in\O_{k,P}$ if and only if
  $m_j=n_j$ for all $j$ with $p_j\nmid P$. Therefore, the only $h$ that
  satisfies both $P|Q(h)$ and $h\in\O_{k,P}$ is
  $$
    h=g+\sum_{1\le i\le\ell,p_i\nmid P}n_i\frac M{p_i}.
  $$
  Then we have
  \begin{equation*}
  \begin{split}
   &\prod_{P|p_1\ldots p_\ell}\left(\prod_{h:P|Q(h),h\in\O_{k,P}}F_h
    \right)^{\mu(P)} \\
  &\qquad\quad=\prod_{(r_1,\ldots,r_\ell)\in\{0,1\}^\ell}
    F_{g+r_1n_1M/p_1+\cdots+r_\ell n_\ell M/p_\ell}^{(M)}
    (\tau)^{(-1)^{n_1+\cdots+n_\ell}}
   =G^{(M)}_{g,n_1,\ldots,n_\ell}(\tau),
  \end{split}
  \end{equation*}
  and
  $$
    f(\tau)=\prod_g\prod_{1\le n_i\le p_i-1}
    G^{(M)}_{g,n_1,\ldots,n_\ell}
    (\tau)^{e_{g+n_1M/p_1+\cdots+n_\ell M/p_\ell}}.
  $$
  This completes the proof of the theorem.
\end{proof}

\noindent{\bf Example.}
Let $N=36$. In the notations of Theorem \ref{theorem:
  nonsquarefree}, we have $\GG^{(6)}=\emptyset$ since $\phi(6)/2-1=0$.
Also, when $M=12$, we have $\ell=1$, $p_1=2$, and
\begin{equation*}
\begin{split}
  \GG^{(12)}(3)=\{G^{(12)}_{g,m}(3\tau):~1\le g\le 12/4,~(g,12)=1,~1\le
  m\le 1\}=\{G^{(12)}_{1,1}(3\tau)\},
\end{split}
\end{equation*}
where
\begin{equation} \label{equation: example 36-1}
  G^{(12)}_{1,1}(3\tau)=\frac{F^{(12)}_1(3\tau)}{F^{(12)}_7(3\tau)}
 =\frac{E^{(12)}_1(3\tau)/E^{(12)}_3(3\tau)}
  {E^{(12)}_5(3\tau)/E^{(12)}_3(3\tau)}
 =\frac{E^{(12)}_1(3\tau)}{E^{(12)}_5(3\tau)}.
\end{equation}
When $M=18$, we have $\ell=1$, $p_1=3$, and
\begin{equation*}
\begin{split}
  \GG^{(18)}(2)&=\{G^{(18)}_{g,m}(2\tau):~1\le g\le 18/6,~(g,18)=1,~1\le
  m\le 2\} \\
  &=\{G^{(18)}_{1,1}(2\tau),G^{(18)}_{1,2}(2\tau)\},
\end{split}
\end{equation*}
where
\begin{equation} \label{equation: example 36-2}
  G^{(18)}_{1,1}(2\tau)=\frac{F^{(18)}_1(2\tau)}{F^{(18)}_7(2\tau)}
 =\frac{E^{(18)}_1(2\tau)/E^{(18)}_8(2\tau)}
  {E^{(18)}_7(2\tau)/E^{(18)}_2(2\tau)},
\end{equation}
and
\begin{equation} \label{equation: example 36-3}
  G^{(18)}_{1,2}(2\tau)=\frac{F^{(18)}_1(2\tau)}{F^{(18)}_{13}(2\tau)}
 =\frac{E^{(18)}_1(2\tau)/E^{(18)}_8(2\tau)}
  {E^{(18)}_5(2\tau)/E^{(18)}_4(2\tau)}.
\end{equation}
When $M=36$, we have $\ell=2$, $p_1=2$, $p_2=3$, and
\begin{equation*}
\begin{split}
  \GG^{(36)}&=\{G^{(36)}_{g,m_1,m_2}(\tau):~1\le g\le 36/12,~(g,36)=1,
  ~1\le m_1\le 1,~1\le m_2\le 2\} \\
  &=\{G^{(36)}_{1,1,1}(\tau),G^{(36)}_{1,1,2}(\tau)\},
\end{split}
\end{equation*}
where
\begin{equation} \label{equation: example 36-4}
  G^{(36)}_{1,1,1}(\tau)=\frac{F^{(36)}_1(\tau)F^{(36)}_{31}(\tau)}
  {F^{(36)}_{19}(\tau)F^{(36)}_{13}(\tau)}
 =\frac{E^{(36)}_1(\tau)E^{(36)}_5(\tau)}
  {E^{(36)}_{17}(\tau)E^{(36)}_{13}(\tau)},
\end{equation}
and
\begin{equation} \label{equation: example 36-5}
  G^{(36)}_{1,1,2}(\tau)=\frac{F^{(36)}_1(\tau)F^{(36)}_{43}(\tau)}
  {F^{(36)}_{19}(\tau)F^{(36)}_{25}(\tau)}
 =\frac{E^{(36)}_1(\tau)E^{(36)}_7(\tau)}
  {E^{(36)}_{17}(\tau)E^{(36)}_{11}(\tau)}.
\end{equation}
By Theorem \ref{theorem: nonsquarefree}, the functions
\eqref{equation: example 36-1}--\eqref{equation: example 36-5} form a
basis for $\FF_1^\infty(36)$. To check the correctness, we form a
$5\times 6$ matrix
$$
  M=\begin{pmatrix}3&-3&-3&3&3&-3\\
  6&-4&-2&-2&-4&6\\4&2&-6&-6&2&4\\
  6&1&5&-5&-1&-6\\5&6&-1&1&-6&-5\end{pmatrix}.
$$
whose rows consist of the orders of the above
functions at $1/36$, $5/36$, $7/36$, $11/36$, $13/36$, and $17/36$.
From the matrix we deduce that the class number
$h_1^\infty(36)$ is equal to $31248$, which agrees with what one gets
using Theorem A. To determine the group structure and the generators of
the divisor class group, we compute the Hermite normal form of $M$. We
find it is
$$
  \begin{pmatrix}1&0&0&0&-1540&1539 \\
  0&1&0&0&-981&980\\0&0&1&-1&2044&-2044\\
  0&0&0&4&-1588&1584\\0&0&0&0&7812&-7812\end{pmatrix}.
$$
Therefore the divisor class group is isomorphic to $C_4\times
C_{7812}$, where the components are generated by the classes of
$$
  (11/36)-397(13/36)+396(17/36), \qquad (13/36)-(17/36),
$$
respectively.
\medskip

\noindent{\bf Example.}
Consider the case $N=40$ with $L=10$. We have
$$
  \GG^{(10)}(4)=\left\{\frac{E^{(10)}_1(4\tau)E^{(10)}_2(4\tau)}
  {E^{(10)}_3(4\tau)E^{(10)}_4(4\tau)}\right\}, \qquad
  \GG^{(20)}(2)=\left\{\frac{E^{(20)}_1(2\tau)}{E^{(20)}_9(2\tau)},
 ~\frac{E^{(20)}_3(2\tau)}{E^{(20)}_7(2\tau)}\right\},
$$
and
$$
  \GG^{(40)}(1)=\left\{\frac{E_1(\tau)E_5(\tau)}
  {E_{15}(\tau)E_{19}(\tau)},
  \frac{E_3(\tau)E_{15}(\tau)}
  {E_{17}(\tau)E_5(\tau)},
  \frac{E_7(\tau)E_5(\tau)}
  {E_{13}(\tau)E_{15}(\tau)},
  \frac{E_9(\tau)E_5(\tau)}
  {E_{11}(\tau)E_{15}(\tau)}
  \right\}.
$$
We find that the class number $h_1^\infty(40)$ is $4775680$, and the
divisor class group is isomorphic to $C_{298480}\times C_4^2$.
\medskip

\noindent{\bf Example.}
Consider $N=72$. We have $\GG^{(6)}=\emptyset$,
$\GG^{(12)}(6)=\{E^{(12)}_1(6\tau)/E^{(12)}_5(6\tau)\}$,
$$
  \GG^{(18)}(4)=\left\{\frac{E^{(18)}_1(4\tau)E^{(18)}_2(4\tau)}
  {E^{(18)}_7(4\tau)E^{(18)}_8(4\tau)},
 ~\frac{E^{(18)}_1(4\tau)E^{(18)}_4(4\tau)}
  {E^{(18)}_5(4\tau)E^{(18)}_8(4\tau)}\right\},
$$
$$
  \GG^{(24)}(3)=\left\{\frac{E^{(24)}_1(3\tau)E^{(24)}_3(3\tau)}
  {E^{(24)}_{11}(3\tau)E^{(24)}_9(3\tau)},
 ~\frac{E^{(24)}_5(3\tau)E^{(24)}_9(3\tau)}
  {E^{(24)}_7(3\tau)E^{(24)}_3(3\tau)}\right\},
$$
$$
  \GG^{(36)}(2)=\left\{\frac{E^{(36)}_1(2\tau)E^{(36)}_5(2\tau)}
  {E^{(36)}_{13}(2\tau)E^{(36)}_{17}(2\tau)},
 ~\frac{E^{(36)}_1(2\tau)E^{(36)}_7(2\tau)}
  {E^{(36)}_{11}(2\tau)E^{(36)}_{17}(2\tau)}\right\},
$$
and
$$
  \GG^{(72)}=\left\{\frac{E_1(\tau)E_{11}(\tau)}
  {E_{25}(\tau)E_{35}(\tau)},
  \frac{E_1(\tau)E_{13}(\tau)}
  {E_{23}(\tau)E_{25}(\tau)},
  \frac{E_5(\tau)E_7(\tau)}
  {E_{29}(\tau)E_{31}(\tau)},
  \frac{E_5(\tau)E_{17}(\tau)}
  {E_{19}(\tau)E_{31}(\tau)}\right\}.
$$
Using this basis for $\FF_1^\infty(72)$, we find the divisor class
group is isomorphic to
$$
  C_4\times C_{12}\times C_{36}\times C_{144}\times C_{9146133360}.
$$
\end{subsection}
\end{section}

\begin{section}{Computational results} \label{section: results}
In this section, we give a few tables of computational results.
The first table contains the group structure of $\CC_1^\infty(N)$ for
$N\le 100$. (Note that for $N=1,\ldots,10$ and $N=12$, the Jacobian is
trivial.) For the reader's convenience, we have also included the
genus of the modular curve $X_1(N)$ and the prime factorization of the
group order of $\CC_1^\infty(N)$ for $N\le 50$. We have used Hazama's
formula (whenever applicable) and Yu's formula to check that
the group orders are correct. Here the notation
$[n_1,\ldots,n_k]$ means that the group structure is
$(\Z/n_1\Z)\times\cdots\times(\Z/n_k\Z)$.

In the second table, we give the $p$-parts of $\CC_1^\infty(p^n)$ for
small $p^n$. The notation $(p^{e_1})^{n_1}\ldots(p^{e_k})^{n_k}$ means
that the primary decomposition of $\CC_1^\infty(p^n)$ contains $n_i$
copies of $\Z/p^{e_i}\Z$. Finally, in the last table, we list the
$p$-parts of $\CC_1^\infty(mp^n)$ for selected integers $N=mp^n$.
\vfill

\begin{table}
\caption{Group structure of $\CC_1^\infty(N)$, $N\le 100$}
$$ \extrarowheight1pt
\begin{array}{c||c|l|l} \hline\hline
N & \text{genus} & \text{class number} & \text{structure} \\ \hline\hline
11 & 1 & 5 & \text{cyclic} \\ \hline
13 & 2 & 19 & \text{cyclic} \\ \hline
14 & 1 & 3 & \text{cyclic} \\ \hline
15 & 1 & 2^2 & \text{cyclic} \\ \hline
16 & 2 & 2\cdot5 & \text{cyclic} \\ \hline
17 & 5 & 2^3\cdot 73 & \text{cyclic} \\ \hline
18 & 2 & 7 & \text{cyclic} \\ \hline
19 & 7 & 3^2\cdot487 & \text{cyclic} \\ \hline
20 & 3 & 2^2\cdot5 & \text{cyclic} \\ \hline
21 & 5 & 2\cdot7\cdot13 & \text{cyclic} \\ \hline
22 & 6 & 5\cdot31 & \text{cyclic} \\ \hline
23 & 12 & 11\cdot37181 & \text{cyclic} \\ \hline
24 & 5 & 2^2\cdot3\cdot5 & \text{cyclic} \\ \hline
25 & 12 & 5\cdot71\cdot641 & \text{cyclic} \\ \hline
26 & 10 & 3\cdot5\cdot7\cdot19 & \text{cyclic} \\ \hline
27 & 13 & 3^3\cdot19\cdot307 & [3, 52497] \\ \hline
28 & 10 & 2^6\cdot3\cdot13 & [4, 4, 156] \\ \hline
29 & 22 & 2^6\cdot3\cdot7\cdot43\cdot17837 & [4, 4, 64427244]
  \\ \hline
30 & 9  & 2^2\cdot5\cdot17 & \text{cyclic} \\ \hline
31 & 26 & 2^2\cdot5^2\cdot7\cdot11\cdot2302381 &
  [10, 1772833370] \\ \hline
32 & 17 & 2^6\cdot3^2\cdot5\cdot97 & [2, 12, 11640] \\ \hline
33 & 21 & 2\cdot3\cdot5\cdot11\cdot61\cdot421 & \text{cyclic} \\ \hline
34 & 21 & 2^3\cdot3\cdot5^2\cdot17\cdot73 & [5, 148920] \\
  \hline
35 & 25 & 2^2\cdot3\cdot5\cdot13^2\cdot31\cdot37\cdot61
   & [13, 54574260] \\ \hline
36 & 17 & 2^4\cdot3^2\cdot7\cdot31 & [4, 7812] \\ \hline
37 & 40 & 3^2\cdot5\cdot7\cdot19\cdot37\cdot73\cdot577\cdot17209
  & \text{cyclic} \\ \hline
38 & 28 & 3^4\cdot7\cdot73\cdot487 & [9, 2239713]\\ \hline
39 & 33 & 2^5\cdot3^4\cdot7^2\cdot13^3\cdot19 & [1638, 3236688] \\ \hline
40 & 25 & 2^8\cdot5\cdot7\cdot13\cdot41
   & [4, 4, 298480] \\ \hline
41 & 51 & 2^4\cdot5\cdot13\cdot31^2\cdot431\cdot250183721
   & \text{cyclic} \\ \hline
42 & 25 & 2\cdot3\cdot5\cdot7^2\cdot13^2 & [91, 2730] \\ \hline
43 & 57 & 2^2\cdot7\cdot19\cdot29\cdot463\cdot1051\cdot416532733
   & [2, 1563552532984879906] \\ \hline
44 & 36 & 2^8\cdot5\cdot7\cdot31^2\cdot101 & [4, 4, 124, 438340] \\ \hline
45 & 41 & 2^2\cdot3^6\cdot7\cdot31\cdot73\cdot3637
   & [9, 9, 2074093812] \\ \hline
46 & 45 & 11\cdot89\cdot683\cdot37181 & \text{cyclic} \\ \hline
47 & 70 & 23\cdot139\cdot82397087\cdot12451196833
  & \text{cyclic} \\ \hline
48 & 37 & 2^8\cdot3\cdot5^2\cdot41\cdot73 & [4, 20, 718320] \\ \hline
49 & 69 & 7^3\cdot113\cdot2437\cdot1940454849859
   & [7, 26183855453442042671] \\ \hline
50 & 48 & 5^2\cdot11\cdot31\cdot41\cdot71\cdot641 & \text{cyclic} \\ \hline
   \hline
\end{array}
$$
\vfill\newpage
\end{table}

\begin{table*}
$$ \extrarowheight3pt
\begin{array}{c||l} \hline\hline
N & \text{structure} \\ \hline\hline
51 & [8, 1201887101691040] \\ \hline
52 & [4, 4, 4, 4, 5823652380] \\ \hline
53 & [182427302879183759829891277] \\ \hline
54 & [9, 509693373] \\ \hline
55 & [110, 8972396739917886000] \\ \hline
56 & [4, 4, 16, 16, 528, 4427280] \\ \hline
57 & [7, 3446644128227394822] \\ \hline
58 & [4, 172, 4622976893220] \\ \hline
59 & [17090415233025974812945896997681] \\ \hline
60 & [4, 4, 80, 2174640] \\ \hline
61 & [77, 11245132002040993823541663395815] \\ \hline
62 & [11, 10230, 1813608537510] \\ \hline
63 & [9, 9, 18, 23940, 36513252544860] \\ \hline
64 & [2, 4, 4, 4, 8, 48, 73910454036960] \\ \hline
65 & [2, 4, 4, 4, 4, 64, 69171648, 3833806702270272] \\ \hline
66 & [341, 8669648790] \\ \hline
67 & [661, 228166524544404715482454653548693117] \\ \hline
68 & [2, 4, 4, 4, 4, 4, 340, 29034225327840] \\ \hline
69 & [419621485489110883825078452] \\ \hline
70 & [13, 39, 7825676012700] \\ \hline
71 & [701, 846772703911192558471548563811885556615] \\ \hline
72 & [4, 12, 36, 144, 9146133360] \\ \hline
73 & [2, 2, 9940318318931388769396722069876040037329842] \\ \hline
74 & [87381, 1137260725252531425] \\ \hline
75 & [25, 25, 229987489818652358805100] \\ \hline
76 & [4, 4, 4, 4, 4, 4, 36, 939850887824333604] \\ \hline
77 & [4, 8, 152, 456, 89190700421406700205983720927320] \\ \hline
78 & [273, 1638, 66303553680] \\ \hline
79 & [521, 29427100164209457485089447933533181481550301759] \\ \hline
80 & [4, 4, 4, 4, 16, 80, 13855590960585920] \\ \hline
81 & [3, 9, 9, 9, 9, 9, 27, 87945822520529641810558635771] \\ \hline
82 & [155, 8525, 729055927995792711600] \\ \hline
83 & [98686349372029170201616572533501298687049100544333193] \\ \hline
84 & [4, 4, 4, 364, 3640, 2254324800] \\ \hline
85 & [8, 16, 16, 6089864235465097347758333448185021417280] \\
\end{array}
$$
\end{table*}
\vfill

\begin{table*}
$$ \extrarowheight3pt
\begin{array}{c||l} \hline\hline
N & \text{structure} \\ \hline\hline
86 & [127, 32766, 25615681147891287499998] \\ \hline
87 & [4, 8, 8, 950886641657525419895591717028046920] \\ \hline
88 & [4, 4, 4, 4, 4, 16, 496, 5456, 271296378433899280] \\ \hline
89 & [2, 2,
  16752498873229395124991547026173083931082925441759332846930] \\
  \hline
90 & [3, 9, 63, 552341552604660] \\ \hline
91 & [4, 8, 24, 191520, 679547136977329249210045615114328726306400]
  \\ \hline
92 & [4, 4, 4, 4, 4, 4, 4, 4, 4, 4, 17196616295174096057972420]
  \\ \hline
93 & [2, 58520, 27655854146903254636965396728546654040] \\ \hline
94 & [1636915575475523620555957536005041] \\ \hline
95 & [234, 234, 28721474819561261359171101174478296073945013520]
  \\ \hline
96 & [4, 4, 4, 4, 48, 240, 297091975708501440] \\ \hline
97 & [35,
  184448350307327781233828921084300852328775499446274957379700139920]
   \\\hline
98 & [49, 783386490641061468174212253579] \\ \hline
99 & [3, 9, 9, 9, 9, 9, 9, 9, 117459, 
            186692207903601450412662961290630] \\ \hline
100 & [4, 4, 4, 4, 4, 20, 100, 100, 11049959065582110305500] \\
  \hline\hline
\end{array}
$$
\end{table*}

\begin{table}
\caption{$p$-primary part of $\CC_1^\infty(p^n)$}
$$ \extrarowheight3pt
\begin{array}{c||l} \hline\hline
p^n & p\text{-primary subgroups} \\ \hline\hline
2^4 & (2) \\ \hline
2^5 & (2)(2^2)(2^3) \\ \hline
2^6 & (2)(2^2)^3(2^3)(2^4)(2^5) \\ \hline
2^7 & (2)(2^2)^7(2^3)(2^4)^3(2^5)(2^6)(2^7)\\ \hline
2^8 & (2)(2^2)^{15}(2^3)(2^4)^7(2^5)(2^6)^3(2^7)(2^8)(2^9) \\ \hline
2^9 & (2)(2^2)^{31}(2^3)(2^4)^{15}(2^5)(2^6)^7(2^7)(2^8)^3(2^9)(2^{10})(2^{11})
  \\ \hline\hline
3^3 & (3)(3^2) \\ \hline
3^4 & (3)(3^2)^5(3^3)(3^4)\\ \hline
3^5 & (3)(3^2)^{17}(3^3)(3^4)^5(3^5)(3^6) \\ \hline
3^6 & (3)(3^2)^{53}(3^3)(3^4)^{17}(3^5)(3^6)^5(3^7)(3^8)\\ \hline\hline
5^2 & (5) \\ \hline
5^3 & (5)(5^2)^7(5^3)\\ \hline
5^4 & (5)(5^2)^{39}(5^3)(5^4)^7(5^5)\\ \hline\hline
7^2 & (7)(7^2) \\ \hline
7^3 & (7)(7^2)^{17}(7^3)(7^4) \\ \hline\hline
\end{array}
$$
\end{table}

\begin{table}
\caption{$p$-primary part of $\CC_1^\infty(mp^n)$}
$$ \extrarowheight3pt
\begin{array}{c||l} \hline\hline
mp^n & p\text{-primary subgroups} \\ \hline\hline
2\cdot3^2 & (1) \\ \hline
2\cdot3^3 & (3^2)^2 \\ \hline
2\cdot3^4 & (3^2)^6(3^4)^2 \\ \hline
2\cdot3^5 & (3^2)^{18}(3^4)^6(3^6)^2 \\ \hline
2\cdot3^6 & (3^2)^{54}(3^4)^{18}(3^6)^6(3^8)^2 \\ \hline
2\cdot5^2 & (5^2) \\ \hline
2\cdot5^3 & (5^2)^8(5^4) \\ \hline
2\cdot5^4 & (5^2)^{40}(5^4)^8(5^6) \\ \hline
2\cdot7 & (1) \\ \hline
2\cdot7^2 & (7^2)^2 \\ \hline
2\cdot7^3 & (7^2)^{18}(7^4)^2 \\ \hline\hline
3\cdot2^3 & (2^2) \\ \hline
3\cdot2^4 & (2^2)^2(2^4) \\ \hline
3\cdot2^5 & (2^2)^4(2^4)^2(2^6) \\ \hline
3\cdot2^6 & (2^2)^8(2^4)^4(2^6)^2(2^8) \\ \hline
3\cdot2^7 & (2^2)^{16}(2^4)^8(2^6)^4(2^8)^2(2^{10}) \\ \hline
3\cdot2^8 & (2^2)^{32}(2^4)^{16}(2^6)^8(2^8)^4(2^{10})^2(2^{12}) \\
\hline
3\cdot2^9 & (2^2)^{64}(2^4)^{32}(2^6)^{16}(2^8)^8(2^{10})^4
  (2^{12})^2(2^{14}) \\ \hline\hline
4\cdot3^2 & (3^2) \\ \hline
4\cdot3^3 & (3^2)^4(3^4) \\ \hline
4\cdot3^4 & (3^2)^{12}(3^4)^4(3^6) \\ \hline
4\cdot3^5 & (3^2)^{36}(3^4)^{12}(3^6)^6(3^8) \\ \hline\hline
6\cdot5 & (5) \\ \hline
6\cdot5^2 & (5)(5^2)^2(5^3) \\ \hline
6\cdot5^3 & (5)(5^2)^{15}(5^3)(5^4)^2(5^5) \\ \hline
6\cdot7 & (7)^2 \\ \hline
6\cdot7^2 & (7)^2(7^2)^3(7^3)^2 \\ \hline
6\cdot7^3 & (7)^2(7^2)^{34}(7^3)^2(7^4)^3(7^5)^2 \\ \hline
\hline
\end{array}
$$
\end{table}
\end{section}

\bibliographystyle{plain}

\end{document}